\newsavebox{\@brx}
\newcommand{\llangle}[1][]{\savebox{\@brx}{\(\m@th{#1\langle}\)}%
  \mathopen{\copy\@brx\kern-0.5\wd\@brx\usebox{\@brx}}}
\newcommand{\rrangle}[1][]{\savebox{\@brx}{\(\m@th{#1\rangle}\)}%
  \mathclose{\copy\@brx\kern-0.5\wd\@brx\usebox{\@brx}}}
\newtheorem{theorem}{Theorem}[section]
\newtheorem{corollary}[theorem]{Corollary}
\newtheorem{lemma}[theorem]{Lemma}
\newtheorem{proposition}[theorem]{Proposition}
\theoremstyle{definition}
\newtheorem{remark}[theorem]{Remark}
\numberwithin{equation}{subsection}
\newtheorem*{ack}{Acknowledgement}
\newcommand{\Aut}{\operatorname{Aut}}
\newcommand{\M}{\operatorname{M}}
\newcommand{\Oo}{\operatorname{O}}
\newcommand{\Z}{\operatorname{Z}}
\begin{document}

\title[Virtual extensions of symmetric groups]{Commutator subgroups and crystallographic quotients of virtual extensions of symmetric groups}

\author{Pravin Kumar}
\address{Department of Mathematical Sciences, Indian Institute of Science Education and Research (IISER) Mohali, Sector 81,  S. A. S. Nagar, P. O. Manauli, Punjab 140306, India.}
\email{pravin444enaj@gmail.com}

\author{Tushar Kanta Naik}
\address{School of Mathematical Sciences, National Institute of Science Education and Research, Bhubaneswar, An OCC of Homi Bhabha National Institute, P. O. Jatni, Khurda 752050, Odisha, India.}
\email{tushar@niser.ac.in}

\author{Neha Nanda}
\address{Laboratoire de Math\'{e}matiques Nicolas Oresme UMR CNRS 6139, Universit\'{e} de Caen Normandie, 14000 Caen, France.}
\email{nehananda94@gmail.com}

\author{Mahender Singh}
\address{Department of Mathematical Sciences, Indian Institute of Science Education and Research (IISER) Mohali, Sector 81,  S. A. S. Nagar, P. O. Manauli, Punjab 140306, India.}
\email{mahender@iisermohali.ac.in}

\subjclass[2020]{Primary 20F55, 20H15; Secondary 20F36}
\keywords{Bieberbach group, crystallographic group, Reidemeister-Schreier method, virtual braid group, virtual triplet group, virtual twin group}

\begin{abstract}
The virtual braid group $VB_n$, the virtual twin group $VT_n$ and the virtual triplet group $VL_n$ are extensions of the symmetric group $S_n$, which are motivated by the Alexander-Markov correspondence for virtual knot theories. The kernels of natural epimorphisms of these groups onto the symmetric group $S_n$ are the pure virtual braid group $VP_n$, the pure virtual twin group $PVT_n$ and the pure virtual triplet group $PVL_n$, respectively. In this paper, we investigate commutator subgroups, pure subgroups and crystallographic quotients of these groups. We derive explicit finite presentations of the pure virtual triplet group $PVL_n$,  the commutator subgroup $VT_n^{'}$ of $VT_n$ and the commutator subgroup $VL_n^{'}$  of $VL_n$. Our results complete the understanding of these groups, except that of $VB_n^{'}$, for which the existence of a finite presentations is not known for $n \ge 4$. We also prove that $VL_n/PVL_n^{'}$ is a crystallographic group and give an explicit construction of infinitely many torsion elements in it.
\end{abstract}

\maketitle

\section{Introduction}
For  $n \ge 2$, the  symmetric group $S_n$ admits a Coxeter presentation with generators $\{\tau_1, \ldots, \tau_{n-1} \} $ and defining relations:
\begin{enumerate}
\item $\tau_i^2 = 1$ for  $1\leq i \leq n-1$,
\item $\tau_i\tau_j\tau_i=\tau_j\tau_i\tau_j$ for $|i-j|=1$,
\item $\tau_i\tau_j=\tau_j\tau_i$ for $\mid i - j\mid \geq 2$.
\end{enumerate}
\par

By omitting all the relations of type (1), (2) or (3) one at a time from the preceding presentation, we obtain three canonical extensions of $S_n$, namely, the braid group $B_n$,  the twin group $T_n$ and the triplet group $L_n$.

\begin{Small}
\begin{eqnarray}
 & & B_n =\Bigg\langle \sigma_1, \sigma_2, \ldots, \sigma_{n-1} ~\mid~ \sigma_i\sigma_j=\sigma_j\sigma_i \quad \textrm{for} \quad |i-j| \ge 2, \quad \sigma_i\sigma_j\sigma_i=\sigma_j\sigma_i\sigma_j \quad \text{for} \quad |i-j|=1 \Bigg\rangle.\\
&&  T_n = \Bigg\langle s_1, s_2, \ldots, s_{n-1} ~\mid~ s_i^2 = 1  \quad \textrm{for} \quad 1 \le i\le n-1, \quad s_is_j=s_js_i \quad  \text{for} \quad |i - j|  \geq 2 \Bigg\rangle.\\
 &&  L_n= \Bigg\langle y_1, y_2, \ldots, y_{n-1} ~\mid~ y_i^2 = 1  \quad \textrm{for} \quad 1 \le i\le n-1, \quad y_iy_jy_i=y_jy_i y_j \quad \text{for} \quad  |i-j| = 1\Bigg\rangle.
\end{eqnarray}
\end{Small}

The terms twin groups and triplet groups first appeared in the work \cite{MR1386845} of Khovanov on $K(\pi,1)$ subspace arrangements. The kernels of natural epimorphisms
$$B_n \to S_n \quad \textrm{given by} \quad  \sigma_i \mapsto \tau_i,$$
$$T_n \to S_n \quad \textrm{given by} \quad  s_i \mapsto \tau_i$$
and
$$L_n \to S_n \quad \textrm{given by} \quad  y_i \mapsto \tau_i$$
are the pure braid group $P_n$, the pure twin group $PT_n$ and the pure triplet group $PL_n$, respectively.  Pure braid groups are well-known to have complements of complex hyperplane arrangements as their Eilenberg--MacLane spaces. Pure twin groups and pure triplet groups are real counterparts of pure braid groups, and also admit suitable codimension two subspace arrangements as their Eilenberg--MacLane spaces \cite{MR1386845}.  Furthermore, these groups have topological interpretations similar to braid groups, and play the role of groups under the Alexander-Markov correspondence for some appropriate knot theories. Twin groups are related to doodles on the 2-sphere, which were introduced by Fenn and Taylor \cite{MR0547452} as finite collections of simple closed curves on the 2-sphere without triple or higher intersections. Allowing self intersections of curves, Khovanov \cite{MR1370644} extended the idea to finite collections of closed curves without triple or higher intersections on a closed oriented surface. Similarly, triplet groups are related to topological objects called noodles. Fixing a codimension one foliation (with singular points) on the 2-sphere, a noodle is a collection of closed curves on the 2-sphere such that no two intersection points belong to the same leaf of the foliation, there are no quadruple intersections and no intersection point can occupy a singular point of the foliation. 
\par

Khovanov \cite{MR1370644} proved an analogue of Alexander Theorem by showing that every oriented doodle on a $2$-sphere is closure of a twin. An analogue of Markov Theorem in this setting has been proved by Gotin \cite{Gotin}, though the idea has been implicit in \cite{Khovanov1990}. The structure of the pure twin group $PT_n$ for small values of $n$ has been well-understood \cite{MR4027588, MR4170471, MR4079623}, but understanding the general case seems challenging (see \cite{Mostovoy2020} for some progress). Concerning triplet groups, it has been shown in \cite{MR4270786} that the pure triplet group $PL_n$ is a non-abelian free group of finite rank for $n \ge 4$.
\par

The virtual knot theory is a far reaching generalisation of the classical knot theory and was introduced by Kauffman in \cite{MR1721925}. Motivated by the well-known Alexander-Markov correspondence for  virtual knot theories, the preceding three extensions of $S_n$ can be enlarged to their virtual avatars, namely, the virtual  braid group $VB_n$ \cite{MR2128049},  the virtual  twin group $VT_n$ \cite{MR4027588, MR4209535} and  the virtual  triplet  group $VL_n$ (which we introduce below).

\begin{small}
\begin{eqnarray}
  &VB_n=& \Bigg\langle \sigma_1, \sigma_2, \ldots, \sigma_{n-1}, \rho_1, \rho_2, \ldots, \rho_{n-1}~\mid~ \rho_i^2=1 \quad \text{for} \quad1\leq i \leq n-1,\\
\nonumber &&\sigma_i \sigma_j= \sigma_j \sigma_i, \quad \rho_i\rho_j=\rho_j \rho_i, \quad  \sigma_i\rho_j= \rho_j\sigma_i  \quad \text{for} \quad |i-j| \ge 2,\\
\nonumber &&\sigma_i\sigma_j\sigma_i=\sigma_j\sigma_i\sigma_j, \quad \rho_i\rho_j\rho_i=\rho_j\rho_i\rho_j, \quad  \rho_i\sigma_j\rho_i=\rho_j\sigma_i\rho_j \quad \text{for} \quad |i-j| =1\Bigg\rangle.
\end{eqnarray}
\end{small}

\begin{small}
\begin{eqnarray}\label{vtn presentation}
  &VT_n=& \Bigg\langle s_1, s_2, \ldots, s_{n-1}, \rho_1, \rho_2, \ldots, \rho_{n-1}~\mid~ s_i^2=1, \quad \rho_i^2=1 \quad \text{for} \quad1\leq i \leq n-1,\\
\nonumber &&s_i s_j= s_j s_i, \quad \rho_i\rho_j=\rho_j \rho_i, \quad  s_i\rho_j= \rho_j s_i  \quad \text{for} \quad |i-j| \ge 2,\\
\nonumber && \rho_i\rho_j\rho_i=\rho_j\rho_i\rho_j, \quad  \rho_i s_j\rho_i=\rho_js_i\rho_j \quad \text{for} \quad |i-j| =1\Bigg\rangle.
\end{eqnarray}
\end{small}
\begin{small}
\begin{eqnarray}
  &VL_n=& \Bigg\langle y_1, y_2, \ldots, y_{n-1}, \rho_1, \rho_2, \ldots, \rho_{n-1}~\mid~ y_i^2=1, \quad \rho_i^2=1 \quad \text{for} \quad1\leq i \leq n-1,\\
\nonumber && \rho_i\rho_j=\rho_j \rho_i, \quad  y_i\rho_j= \rho_j y_i  \quad \text{for} \quad |i-j| \ge 2,\\
\nonumber &&y_iy _jy_i=y_jy_iy_j, \quad \rho_i\rho_j\rho_i=\rho_j\rho_i\rho_j, \quad  \rho_iy_j\rho_i=\rho_jy_i\rho_j \quad \text{for} \quad |i-j| =1\Bigg\rangle.
\end{eqnarray}
\end{small}
There are natural epimorphisms 
$$VB_n \to S_n \quad \textrm{given by} \quad  \sigma_i, \rho_i \mapsto \tau_i,$$
$$VT_n \to S_n \quad \textrm{given by} \quad  s_i, \rho_i \mapsto \tau_i$$
and
$$VL_n \to S_n \quad \textrm{given by} \quad  y_i, \rho_i \mapsto \tau_i.$$
The kernels of these epimorphisms are the pure virtual braid group $VP_n$, the pure virtual twin group $PVT_n$ and the pure virtual triplet group $PVL_n$, respectively. The virtual groups together with their pure subgroups make the following diagram commute.
\begin{Small}
\[\begin{tikzcd}
	{PVT_n} && {S_n} && {PVL_n} \\
	& {VT_n} && {VL_n} \\
	&& {S_n} \\
	& {S_n} && {S_n} \\
	&& {VB_n} \\
	&& {VP_n}
	\arrow[hook, from=6-3, to=5-3]
	\arrow[Rightarrow, no head, from=1-3, to=3-3]
	\arrow[Rightarrow, no head, from=3-3, to=4-2]
	\arrow[Rightarrow, no head, from=3-3, to=4-4]
	\arrow[two heads, from=2-2, to=3-3]
	\arrow[two heads, from=2-4, to=3-3]
	\arrow[two heads, from=5-3, to=3-3]
	\arrow[hook, from=4-4, to=5-3]
	\arrow[hook', from=4-2, to=5-3]
	\arrow[hook, from=4-4, to=2-4]
	\arrow[hook, from=4-2, to=2-2]
	\arrow[hook, from=1-1, to=2-2]
	\arrow[hook, from=1-5, to=2-4]
	\arrow[hook, from=1-3, to=2-2]
	\arrow[hook, from=1-3, to=2-4]
\end{tikzcd}\]
\end{Small}

The present paper is concerned with commutator subgroups, pure subgroups  and certain geometrically motivated quotients of virtual braid groups, virtual twin groups and virtual triplet groups. It is known from \cite[Theorem 1.1]{MR3955820} that the commutator subgroup $VB_n^{'}$ of the virtual braid group $VB_n$ is generated by $(2n- 3)$ elements for $n \ge 4$, whereas $VB_3^{'}$ is infinitely generated. It is still not known whether $VB_n^{'}$ is finitely presented for $n\ge4$. In this paper, we determine a precise finite presentation of the commutator subgroup $VT_n^{'}$ of $VT_n$
(Theorem \ref{commutator-subgroup-vtn}) and the commutator subgroup $VL_n^{'}$ of $VL_n$ (Theorem \ref{commutator-subgroup-vln}). A finite presentation of the pure virtual braid group is well-known due to Bardakov \cite{MR2128039}. It has been proved in \cite{NaikNandaSingh2} that the pure virtual twin group $PVT_n$ is an irreducible right-angled Artin group and a finite presentation has also been given for the same. We use the Reidemeister-Schreier method to derive a finite presentation of the pure virtual triplet group $PVL_n$ (Theorem \ref{presentation pure virtual triplet group}). This completes our understanding of these groups in the virtual setting. As by-products, we deduce that $VT_n$ and $VL_n$ are residually nilpotent if and only if $n = 2$ (Corollaries \ref{vtn residually nilpotent} and \ref{vln residually nilpotent}). 
\par
Crystallographic groups play a crucial role in the study of  groups of isometries of Euclidean spaces. In fact, there is a correspondence between torsion free crystallographic groups (called Bieberbach groups) and the class of compact flat Riemannian manifolds \cite[Theorem 2.1.1]{MR1482520}. Crystallographic quotients of virtual braid groups and virtual twin groups have been investigated in \cite{Ocampo-Santos-2021}. In this paper, we consider the case of virtual triplet groups. More precisely, we prove that $VL_n / PVL_n^{'}$ is a crystallographic group of dimension $n(n-1)/2$~ for $n \geq 2$ (Theorem \ref{cryst. quotient virtual triplet mod pure}), and that it contains infinitely many torsion elements (Corollary \ref{torsion in vln mod pvln}). In fact, we give an explicit construction of  infinitely many torsion elements in $V L_n / PV L_n^{'}$.
\par

We denote the $n$-th term of the lower central series of a group $G$ by $\gamma_n(G)$, where $\gamma_1(G)=G$ and $\gamma_2(G)=G^{'}$ is the commutator subgroup of $G$. 

\medskip

\section{Presentation of commutator subgroup of virtual twin group}\label{sec commutator subgroup of virtual twin}
In this section,  we give a finite presentation of the commutator subgroup $VT_n^{'}$ of $VT_n$. This is achieved through a reduced presentation of $VT_n$, which we prove first.

\begin{theorem}\label{reduced-presentation-n4}
The virtual twin group $VT_n$ $(n \ge 3)$ has the following reduced presentation:
\begin{enumerate}
\item $VT_3 =\langle  s_1, \rho_1, \rho_2 ~\mid~  s_1^2 = \rho_1^2 =\rho_2^2 = (\rho_1\rho_2)^3=1 \rangle$.
\item For $n\geq 4$, $VT_n$ has a presentation with generating set $\{s_1, \rho_1, \rho_2,\dots,\rho_{n-1}\}$ and defining relations as follows:
\begin{align} 
s_1^{2} &=1, \label{nvtn 8} \\
\rho_i^{2} &= 1 \quad \textrm{for} \quad 1 \le i \le n-1,\label{nvtn 9} \\
\rho_i\rho_j &= \rho_j\rho_i \quad \textrm{for} \quad |i - j| \geq 2,\label{nvtn 10} \\
\rho_i\rho_{i+1}\rho_i &= \rho_{i+1}\rho_i\rho_{i+1} \quad \textrm{for} \quad 1 \le i \le n-2,\label{nvtn 11} \\
\rho_is_1 &= s_1\rho_i \quad \textrm{for} \quad i \geq 3,\label{nvtn 12} \\
(s_1\rho_2 \rho_1\rho_3 \rho_2)^4 &= 1. \label{nvtn 13}
\end{align}
\end{enumerate}
\end{theorem}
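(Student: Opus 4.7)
The plan is to derive the reduced presentation from \eqref{vtn presentation} by Tietze transformations that eliminate the generators $s_2,\ldots,s_{n-1}$. Using the original mixed relation $\rho_{k-1}s_k\rho_{k-1}=\rho_k s_{k-1}\rho_k$ together with $\rho_\ell^2=1$, I obtain the recursion
\[
s_k \;=\; \rho_{k-1}\rho_k\,s_{k-1}\,\rho_k\rho_{k-1},\qquad 2\le k\le n-1,
\]
which expresses each $s_k$ as a conjugate $w_k\,s_1\,w_k^{-1}$ for an explicit word $w_k$ in $\rho_1,\ldots,\rho_k$. After adding these as defining relations and Tietze-eliminating $s_2,\ldots,s_{n-1}$, the task reduces to verifying that every remaining defining relation of $VT_n$ is a consequence of \eqref{nvtn 8}--\eqref{nvtn 13}. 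The relations $s_k^{2}=1$ collapse immediately to $s_1^{2}=1$, and the relations $\rho_{k-1}s_k\rho_{k-1}=\rho_k s_{k-1}\rho_k$ become tautologies of the recursion.

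What remains are the far-commutation families $s_i\rho_j=\rho_j s_i$ and $s_i s_j=s_j s_i$ for $|i-j|\ge 2$. For $s_i\rho_j=\rho_j s_i$ with $j\ge i+2$, every letter of $w_i$ has index in $\{1,\ldots,i\}$ and hence far-commutes with $\rho_j$ by \eqref{nvtn 10}, while $s_1$ commutes with $\rho_j$ by \eqref{nvtn 12} since $j\ge 3$; the opposite range $j\le i-2$ I would handle by induction on $i$ using the recursion, being careful in the sub-case $j=i-2$ where the braid relation \eqref{nvtn 11} must be invoked to push $\rho_{i-2}$ past $\rho_{i-1}$. The decisive point is the $s$-commutations. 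I would first show that \eqref{nvtn 13} is equivalent to $s_1 s_3=s_3 s_1$: setting $u=\rho_2\rho_1\rho_3\rho_2$, a short computation from \eqref{nvtn 10}--\eqref{nvtn 11} yields $u^{2}=1$, so \eqref{nvtn 13} reads $(s_1 u)^{4}=1$; since $(s_1 u)^{-1}=u s_1$, this is the same as $(s_1 u)^{2}=(u s_1)^{2}$, which after substituting $s_3 = u s_1 u$ (obtained from the recursion using $\rho_1\rho_3=\rho_3\rho_1$) is precisely $s_1 s_3=s_3 s_1$. For a general pair $(i,j)$ with $j\ge i+2$, I would reduce $s_i s_j=s_j s_i$ to the single case $s_1 s_3=s_3 s_1$ by conjugating with a suitable $\rho$-product that shifts both indices downward, controlling the bookkeeping with the already-established commutations of $\rho$'s with $s_1$.

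The case $n=3$ is the same argument with the indices truncated: after eliminating $s_2$ there is no $s$-commutation to derive, both \eqref{nvtn 12} and \eqref{nvtn 13} become vacuous because $\rho_3$ is not present, and the surviving relations $\rho_1^{2}=\rho_2^{2}=1$ and $\rho_1\rho_2\rho_1=\rho_2\rho_1\rho_2$ combine to $(\rho_1\rho_2)^{3}=1$. The main obstacle is the last reduction step for the $s$-commutations: the intuitive statement is that conjugation by $w_i w_j^{-1}$ carries $[s_i,s_j]$ to $[s_1,s_{j-i+1}]$, but turning this into a formal derivation demands patient word calculations with the braid and far-commutation relations among the $\rho$'s, and it is here that the bulk of the verification lives.
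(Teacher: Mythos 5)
Your proposal is correct and follows the same overall skeleton as the paper's proof: express $s_k$ as $w_k s_1 w_k^{-1}$ via the recursion coming from the mixed relation, Tietze-eliminate $s_2,\ldots,s_{n-1}$, and then verify that each remaining defining relation of $VT_n$ is a consequence of \eqref{nvtn 8}--\eqref{nvtn 13}. Your treatment of the relations $s_k^2=1$, of $\rho_{k-1}s_k\rho_{k-1}=\rho_k s_{k-1}\rho_k$, of the far-commutations $s_i\rho_j=\rho_j s_i$, and of the $n=3$ case matches the paper's Claims 1--3.

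Where you genuinely diverge is in the hardest family, $s_is_j=s_js_i$ for $|i-j|\ge 2$ (the paper's Claim 4). The paper handles this by a long direct word computation: for $i=1$ it rewrites $(s_1\rho_2\rho_1\rho_3\rho_2)^4=1$ as $(s_1\rho_2\rho_1\rho_3\rho_2)^2=(\rho_2\rho_3\rho_1\rho_2 s_1)^2$ and pushes $s_1$ through the expansion of $s_j$, and for $i\ge 2$ it carries out a page-long manipulation using auxiliary identities in $\langle\rho_1,\ldots,\rho_{n-1}\rangle\cong S_n$. You instead observe that with $u=\rho_2\rho_1\rho_3\rho_2$ one has $u^2=1$ and $s_3=us_1u$ (both immediate from \eqref{nvtn 9}, \eqref{nvtn 10}, \eqref{nvtn 11} and the recursion), so that \eqref{nvtn 13} is exactly $[s_1,s_3]=1$; you then reduce the general pair to this one by conjugation. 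This reduction does work: conjugation by $\rho_k\rho_{k-1}$ sends $s_k$ to $s_{k-1}$ (by the already-derived Claim 2 relations) and fixes $s_j$ whenever $j\ge k+2$ (by the already-derived far-commutations), so $[s_i,s_j]$ is conjugate, by relations provable from \eqref{nvtn 8}--\eqref{nvtn 13}, first to $[s_1,s_j]$ and then (using \eqref{nvtn 12} to fix $s_1$ while stepping $j$ down to $3$) to $[s_1,s_3]$. Your approach buys conceptual transparency --- it explains \emph{why} the single relation \eqref{nvtn 13} suffices to force all the $s$-commutations --- at the cost of having to set up the conjugation bookkeeping carefully (in particular, the order of the induction matters: the case $j\ge i+2$ of $s_i\rho_j=\rho_j s_i$, which needs only \eqref{nvtn 10} and \eqref{nvtn 12}, must be available before the case $j\le i-2$, and both before the commutator reduction). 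The paper's computation is longer but entirely self-contained at each step. One small caveat: your intermediate assertion that conjugation by $w_iw_j^{-1}$ carries $[s_i,s_j]$ to $[s_1,s_{j-i+1}]$ is not quite the cleanest waypoint; the two-stage reduction (first lower $i$ to $1$ keeping $j$ fixed, then lower $j$ to $3$ keeping $1$ fixed) avoids having to track a simultaneous shift and lands directly on $[s_1,s_3]$.
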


\begin{proof}
The case $n=3$ is immediate. For $n \ge 4$, we first show that the generating set of $VT_n$ can be reduced to $\{s_1, \rho_1, \rho_2,\dots,\rho_{n-1}\}$.
\medskip

Claim 1:  $s_{i+1}=(\rho_i \rho_{i-1}\dots \rho_2\rho_1)(\rho_{i+1} \rho_{i}\dots \rho_3\rho_2)s_1(\rho_{2} \rho_{3}\dots \rho_i\rho_{i+1})(\rho_1 \rho_2\dots \rho_{i-1}\rho_i)$ for $i \geq 1$.
\par
The case $i=1$ follows from the relation 
\begin{equation}\label{vtn mixed}
 \rho_i s_j\rho_i=\rho_js_i\rho_j \quad \text{for} \quad |i-j| =1.
\end{equation} 
Assuming the claim for $i$ and using the relation \eqref{vtn mixed} for $j=i+1$ gives
$$s_{i+1}=\rho_i \rho_{i+1}(\rho_{i-1} \rho_{i-2}\dots \rho_2\rho_1)(\rho_{i} \rho_{i-1}\dots \rho_3\rho_2)s_1(\rho_{2} \rho_{3}\dots \rho_{i-1}\rho_{i})(\rho_1 \rho_2\dots \rho_{i-2}\rho_{i-1})\rho_{i+1} \rho_{i}.$$
Now, the relation \eqref{nvtn 10} gives
\begin{equation}\label{claim 1 eq}
s_{i+1}=(\rho_i \rho_{i-1}\dots \rho_2\rho_1)(\rho_{i+1} \rho_{i}\dots \rho_3\rho_2)s_1(\rho_{2} \rho_{3}\dots \rho_i\rho_{i+1})(\rho_1 \rho_2\dots \rho_{i-1}\rho_i),
\end{equation}
which is desired. Hence, we can eliminate the generators $s_i$ for $i \geq 2$.
\medskip

Next, we show that the relations of $VT_n$ involving $s_i$ for $i \geq 2$ can be recovered from the relations listed in the statement of the theorem.
\medskip

Claim 2:  The relations $\rho_i s_{i+1}\rho_i=\rho_{i+1} s_{i}\rho_{i+1}$ for $1 \le i \le n-2$ can be recovered from the relations \eqref{nvtn 9} and \eqref{nvtn 10}.
\par 
Using \eqref{claim 1 eq}, we have
\begin{eqnarray*}
\rho_i s_{i+1}\rho_i  &=& \rho_i (\rho_{i} \ldots \rho_1)(\rho_{i+1} \ldots \rho_2) s_1 (\rho_2 \ldots \rho_{i+1})(\rho_1 \ldots \rho_{i})\rho_{i}\\
&=& (\rho_{i-1} \ldots \rho_1)\rho_{i+1}(\rho_i \ldots \rho_2) s_1 (\rho_2 \ldots \rho_{i})\rho_{i+1}(\rho_1 \ldots \rho_{i-1})\\
&=& \rho_{i+1}(\rho_{i-1} \ldots \rho_1)(\rho_i \ldots \rho_2) s_1 (\rho_2 \ldots \rho_{i})(\rho_1 \ldots \rho_{i-1})\rho_{i+1}\\
&=& \rho_{i+1}s_{i}\rho_{i+1},
\end{eqnarray*}
which is desired.
\medskip

Claim 3:  The relations $s_i \rho_j = \rho_j s_i$ for $|i-j| \geq 2$ can be recovered using the relations \eqref{nvtn 10}, \eqref{nvtn 11} and \eqref{nvtn 12}.
\par
If  $j \leq i-2$, then we have
\begin{Small}
  \begin{eqnarray*}
&& s_i \rho_j \\
&=& (\rho_{i-1} \rho_{i-2}\dots \rho_2\rho_1)(\rho_{i} \rho_{i-1}\dots \rho_3\rho_2)s_1(\rho_{2} \rho_{3}\dots \rho_{i-1}\rho_{i})(\rho_1 \rho_2\dots \rho_{i-2}\rho_{i-1})\rho_j\\
&=& (\rho_{i-1} \rho_{i-2}\dots \rho_2\rho_1)(\rho_{i} \rho_{i-1}\dots \rho_3\rho_2)s_1(\rho_{2} \dots \rho_{i-1}\rho_{i})(\rho_1 \rho_2\dots \rho_j \rho_{j+1} \rho_j \dots \rho_{i-1}) ~ \text{ (By \eqref{nvtn 10})}\\
&=& (\rho_{i-1} \rho_{i-2}\dots \rho_2\rho_1)(\rho_{i} \rho_{i-1}\dots \rho_3\rho_2)s_1(\rho_{2} \dots \rho_{i-1}\rho_{i})(\rho_1 \dots \rho_{j+1} \rho_{j} \rho_{j+1} \dots \rho_{i-1}) ~ \text{ (By \eqref{nvtn 11})}\\
&=& (\rho_{i-1} \rho_{i-2}\dots \rho_2\rho_1)(\rho_{i} \rho_{i-1}\dots \rho_3\rho_2)s_1(\rho_{2} \dots \rho_{j+1} \rho_{j+2} \rho_{j+1} \dots \rho_{i})(\rho_1 \dots \rho_{i-1}) ~ \text{ (By \eqref{nvtn 10})}\\
&=& (\rho_{i-1} \rho_{i-2}\dots \rho_2\rho_1)(\rho_{i} \rho_{i-1}\dots \rho_3\rho_2)s_1(\rho_{2} \dots \rho_{j+2} \rho_{j+1} \rho_{j+2} \dots \rho_{i})(\rho_1 \dots \rho_{i-1}) ~ \text{ (By \eqref{nvtn 11})}\\
&=& (\rho_{i-1} \rho_{i-2}\dots \rho_2\rho_1)(\rho_{i} \dots \rho_{j+2} \rho_{j+1} \rho_{j+2} \dots \rho_2)s_1(\rho_{2} \dots \rho_{i})(\rho_1 \dots \rho_{i-1}) ~ \text{ (By \eqref{nvtn 10} and \eqref{nvtn 12})}\\
&=& (\rho_{i-1} \rho_{i-2}\dots \rho_2\rho_1)(\rho_{i} \dots \rho_{j+1} \rho_{j+2} \rho_{j+1} \dots \rho_2)s_1(\rho_{2} \dots \rho_{i})(\rho_1 \dots \rho_{i-1}) ~ \text{ (By \eqref{nvtn 11})}\\
&=& (\rho_{i-1} \dots \rho_{j+1} \rho_{j}\rho_{j+1} \dots \rho_1)(\rho_{i} \dots  \rho_2)s_1(\rho_{2} \dots \rho_{i})(\rho_1 \dots \rho_{i-1}) ~ \text{(By \eqref{nvtn 10})}\\
&=& (\rho_{i-1} \dots \rho_{j} \rho_{j+1}\rho_{j} \dots \rho_1)(\rho_{i} \dots  \rho_2)s_1(\rho_{2} \dots \rho_{i})(\rho_1 \dots \rho_{i-1}) ~ \text{ (By \eqref{nvtn 11})}\\
&=&\rho_{j} (\rho_{i-1} \dots \rho_1)(\rho_{i} \dots  \rho_2)s_1(\rho_{2} \dots \rho_{i})(\rho_1 \dots \rho_{i-1}) ~ \text{ (By \eqref{nvtn 10})}\\
&=& \rho_j s_i,
\end{eqnarray*}
\end{Small}
which is desired. If $j \geq i+2$, then the claim follows from the relations \eqref{nvtn 10} and \eqref{nvtn 12}. 
\medskip

Claim 4: The relations $s_i s_j = s_j s_i $ for $1 \le i \le j-2 \le n-3$ can be recovered from the relations \eqref{nvtn 8} through \eqref{nvtn 13}. 
\par
For clarity, we underline the terms which are modified using the cited relations. First, we consider $i=1$, in which case we have
\begin{eqnarray*}
&&s_1 s_j \\
&=&\dashuline{s_1(\rho_{j-1} \ldots \rho_1)}(\rho_j \ldots \rho_2) s_1 (\rho_2 \dashuline{\rho_3\ldots \rho_j)(\rho_1\rho_2} \ldots \rho_{j-1}) \\
&=&(\rho_{j-1} \ldots \rho_{3})\dashuline{s_1\rho_2\rho_1(\rho_{j} \ldots \rho_3\rho_2}) s_1 \rho_2\rho_1\rho_3\rho_2 (\rho_4 \ldots \rho_j)(\rho_3 \ldots \rho_{j-1}) \\
&=&(\rho_{j-1} \ldots \rho_{3})(\rho_{j} \ldots \rho_4)(s_1 \rho_2\rho_1\rho_3\rho_2)^2 (\rho_4 \ldots \rho_j)(\rho_3 \ldots \rho_{j-1}) \\
&=&(\rho_{j-1} \ldots \rho_{3})(\rho_{j} \ldots \rho_4)(\rho_2\rho_3\rho_1\rho_2s_1)^2 (\rho_4 \ldots \rho_j)(\rho_3 \ldots \rho_{j-1}) \\
&=&(\rho_{j-1} \ldots \rho_1)(\rho_j \ldots \rho_2) s_1 (\rho_2 \dashuline{\rho_3\ldots \rho_j)(\rho_1\rho_2} \ldots \rho_{j-1})s_1 \\
&=& s_js_1.
 \end{eqnarray*}

Next, we assume that $i \ge 2$. Then we have
\begin{Small}
\begin{eqnarray*}
&&s_i s_j \\
&=&(\rho_{i-1} \ldots \rho_1)(\rho_i \ldots \rho_2) s_1(\rho_2 \ldots \rho_i)(\rho_1 \ldots \rho_{i-1})(\dashuline{\rho_{j-1} \ldots \rho_{i+2} \rho_{i+1}} \rho_i \ldots \rho_1)(\rho_j \ldots \rho_2) s_1 \\
&&(\rho_2 \ldots \rho_j)(\rho_1 \ldots \rho_{j-1}) \\
&=&(\rho_{j-1} \ldots \rho_{i+2})(\rho_{i-1} \ldots \rho_1)(\rho_i \ldots \rho_2) s_1(\rho_2 \ldots \rho_i) \rho_{i+1}\dashuline{(\rho_{1 \ldots} \rho_{i-1})(\rho_i \ldots \rho_1)}(\rho_j \ldots \rho_2) s_1   \\
&& (\rho_2 \ldots \rho_j)(\rho_1 \ldots \rho_{j-1}) ~\text{ (By \eqref{nvtn 10})} \\
&=&(\rho_{j-1} \ldots \rho_{i+2})(\rho_{i-1} \ldots \rho_1)(\rho_i \ldots \rho_2) s_1\dashuline{(\rho_2 \ldots \rho_i \rho_{i+1})(\rho_i \ldots \rho_2} \rho_1 \rho_2 \ldots \rho_i)(\rho_j \ldots \rho_2) s_1 \\
&& (\rho_2 \ldots \rho_j)(\rho_1 \ldots \rho_{j-1}) ~\text{ (By \eqref{nvtn 11})}\\
&=&(\rho_{j-1} \ldots \rho_{i+2})(\rho_{i-1} \ldots \rho_1)(\rho_i \ldots \rho_2) s_1(\dashuline{\rho_{i+1} \ldots \rho_3} \rho_2 \rho_3 \ldots \rho_{i+1})(\rho_1 \ldots \rho_i) \\
& &(\dashuline{\rho_j \ldots \rho_{i+3} \rho_{i+2}} \rho_{i+1} \ldots \rho_2) s_1(\rho_2 \ldots \rho_j)(\rho_1 \ldots \rho_{j-1}) ~\text{ (By \eqref{nvtn 11})}\\
&=&(\rho_{j-1} \ldots \rho_{i+2})(\rho_j \ldots \rho_{i+3})(\rho_{i-1} \ldots \rho_1)(\rho_i \ldots \rho_2)(\rho_{i+1} \ldots \rho_3) s_1(\rho_2 \ldots \rho_{i+1}) \rho_{i+2} \\
&& (\rho_1 \dashuline{\rho_2 \ldots \rho_i)(\rho_{i+1} \rho_i \ldots \rho_2)} s_1(\rho_2 \ldots \rho_j)(\rho_1 \ldots \rho_{j-1}) ~\text{ (By \eqref{nvtn 10} and \eqref{nvtn 12})} \\
&=&(\rho_{j-1} \ldots \rho_{i+2})(\rho_j \ldots \rho_{i+3})(\rho_{i-1} \ldots \rho_1)(\rho_i \ldots \rho_2)(\rho_{i+1} \ldots \rho_3) s_1(\rho_2 \ldots \rho_{i+2}) \\
&& \rho_1(\dashuline{\rho_{i+1} \ldots \rho_3} \rho_2 \rho_3 \ldots \rho_{i+1}) \dashuline{s_1}(\rho_2 \ldots \rho_j)(\rho_1 \ldots \rho_{j-1}) ~\text{ (By \eqref{nvtn 11})}\\
& =&(\rho_{j-1} \ldots \rho_{i+2})(\rho_j \ldots \rho_{i+3})(\rho_{i-1} \ldots \rho_1)(\rho_i \ldots \rho_2)(\rho_{i+1} \ldots \rho_3) s_1 (\rho_2 \dashuline{\rho_3 \ldots \rho_{i+2})} \\
&& \dashuline{(\rho_{i+1} \ldots \rho_3)} \rho_1 \rho_2 s_1(\rho_3 \ldots \rho_{i+1})(\rho_2 \ldots \rho_j)(\rho_1 \ldots \rho_{j-1}) ~\text{ (By \eqref{nvtn 10} and \eqref{nvtn 12})}\\
&=& (\rho_{j-1} \ldots \rho_{i+2})(\rho_j \ldots \rho_{i+3})(\rho_{i-1} \ldots \rho_1)(\rho_i \ldots \rho_2)(\rho_{i+1} \ldots \rho_3) s_1 \rho_2(\dashuline{\rho_{i+2} \ldots \rho_4} \rho_3 \dashuline{\rho_4 \ldots \rho_{i+2}}) \\
&&  \rho_1 \rho_2 s_1(\rho_3 \ldots \rho_{i+1})(\rho_2 \ldots \rho_j)(\rho_1 \ldots \rho_{j-1}) ~\text{ (By \eqref{nvtn 11})}\\
& = & (\rho_{j-1} \ldots \rho_{i+2})(\rho_j \ldots \rho_{i+3})(\rho_{i-1} \ldots \rho_1)(\rho_i \ldots \rho_2)(\rho_{i+1} \ldots \rho_3)(\rho_{i+2} \ldots \rho_4)\\
&& s_1 \rho_2 \rho_3 \rho_1 \rho_2 s_1 \dashuline{(\rho_4 \ldots \rho_{i+2})(\rho_3 \ldots \rho_{i+1})(\rho_2 \ldots \rho_j)(\rho_1 \ldots \rho_{j-1})} ~\text{ (By \eqref{nvtn 10} and \eqref{nvtn 12})}.
 \end{eqnarray*}
\end{Small}

Noting that $\langle \sigma_1, \sigma_2, \ldots, \sigma_{n-1} \rangle \cong S_n$,  it is not difficult to see that \eqref{nvtn 10} and \eqref{nvtn 11} give
\begin{eqnarray}
\label{vtn rho eq1} && (\rho_4 \ldots \rho_{i+2})(\rho_3 \ldots \rho_{i+1})(\rho_2 \ldots \rho_i \rho_{i+1} \ldots \rho_j)(\rho_1 \ldots \rho_{i-1} \rho_i \ldots \rho_{j-1}) \\
\nonumber  &=&\rho_2 \rho_1 \rho_3 \rho_2(\rho_4 \rho_3 \rho_2 \rho_1) \ldots(\rho_{i+2} \rho_{i+1} \rho_i \rho_{i-1})(\rho_{i+3} \ldots \rho_j)(\rho_{i+2} \ldots \rho_{j-1}),\\
\nonumber & &\\
\label{vtn rho eq3} && (\rho_4 \rho_{3} \rho_2 \rho_1) \ldots (\rho_{i+2} \rho_{i+1}  \rho_i \rho_{i-1}) \\
\nonumber  &=& (\rho_4 \ldots \rho_{i+2})(\rho_3 \ldots \rho_{i+1})(\rho_2 \ldots \rho_{i})(\rho_1 \ldots \rho_{i-1}),\\
\nonumber & &\\
\label{vtn rho eq2} && (\rho_{i+1} \ldots \rho_3)(\rho_{i+2} \ldots \rho_4)(\dashuline{\rho_2} \rho_3 \ldots \rho_j)(\rho_1 \ldots \rho_{j-1})\\ 
\nonumber  &=& (\rho_2 \ldots \rho_j)(\rho_1 \ldots \rho_{j-1})(\rho_{i-1} \ldots \rho_1)(\rho_i \ldots \rho_2).
 \end{eqnarray}

Using \eqref{vtn rho eq1}, we can write
\begin{Small}
\begin{eqnarray*}
&& s_is_j\\
&=& (\rho_{j-1} \ldots \rho_{i+2})(\rho_j \ldots \rho_{i+3})(\rho_{i-1} \ldots \rho_1)(\rho_i \ldots \rho_2)(\rho_{i+1} \ldots \rho_3)(\rho_{i+2} \ldots \rho_4) \\
& & \dashuline{s_1(\rho_2 \rho_3 \rho_1 \rho_2 s_1 \rho_2 \rho_1 \rho_3 \rho_2)}(\rho_4 \rho_3 \rho_2 \rho_1) \ldots(\rho_{i+2} \rho_{i+1} \rho_i \rho_{i-1})(\rho_{i+3} \ldots \rho_j)(\rho_{i+2} \ldots \rho_{j-1}) \\
&=&(\rho_{j-1} \ldots \rho_{i+2})(\rho_j \ldots \rho_{i+3})(\rho_{i-1} \ldots \rho_1)(\rho_i \ldots \rho_2)(\rho_{i+1} \ldots \rho_3)(\dashuline{\rho_{i+2} \ldots \rho_4}) \\
& &(\dashuline{\rho_2 \rho_1}\rho_3\rho_2 s_1 \rho_2 \rho_1 \rho_3 \rho_2) s_1\dashuline{(\rho_4 \rho_3 \rho_2 \rho_1) \ldots(\rho_{i+2} \rho_{i+1} \rho_i \rho_{i-1})}(\rho_{i+3} \ldots \rho_j)(\rho_{i+2} \ldots \rho_{j-1}) ~\text{ (By \eqref{nvtn 13})} \\
& = &(\rho_{j-1} \ldots \rho_{i+2})(\rho_j \ldots \rho_{i+3})(\rho_{i-1} \ldots \rho_1)(\rho_i \ldots \rho_2)(\rho_{i+1} \ldots \rho_3 \rho_2 \rho_1)(\rho_{i+2} \ldots \rho_4 \rho_3 \rho_2)s_1 \rho_2 \rho_1\rho_3 \rho_2 s_1  \\
& & (\rho_4 \ldots \rho_{i+2})(\rho_3 \ldots \rho_{i+1})(\rho_2 \ldots \rho_i)(\rho_1 \ldots \rho_{i-1})(\dashuline{\rho_{i+3} \ldots \rho_j})(\rho_{i+2} \ldots \rho_{j-1}) ~\text{ (By \eqref{nvtn 10}, \eqref{nvtn 12} and \eqref{vtn rho eq3})}\\
& = &(\rho_{j-1} \ldots \rho_{i+2})(\rho_j \ldots \rho_{i+3})(\rho_{i-1} \ldots \rho_1)(\rho_i \ldots \rho_2)(\rho_{i+1} \ldots \rho_3 \rho_2 \rho_1)(\rho_{i+2} \ldots \rho_4 \rho_3 \rho_2)s_1 \dashuline{\rho_2 \rho_1\rho_3 \rho_2 s_1}  \\
& & (\rho_4 \ldots \rho_{i+2} \rho_{i+3} \ldots \rho_j)(\rho_3 \ldots \rho_{i+1})(\rho_2 \ldots \rho_i)(\rho_1 \ldots \rho_{i-1})(\rho_{i+2} \ldots \rho_{j-1}) ~\text{ (By \eqref{nvtn 10})}\\
& =& (\rho_{j-1} \ldots \rho_{i+2})(\rho_j \ldots \rho_{i+3})(\rho_{i-1} \ldots \rho_1)(\rho_i \ldots \rho_2)(\rho_{i+1} \ldots \rho_3 \dashuline{\rho_2 \rho_1})(\rho_{i+2} \ldots \rho_4\dashuline{ \rho_3 \rho_2)} \\
& & s_1(\rho_2 \rho_3 \rho_4 \ldots \rho_{i+2} \rho_{i+3} \ldots \rho_j)(\rho_1 \rho_2 \ldots \rho_{i+1} \rho_{i+2} \ldots \rho_{j-1}) s_1(\rho_2 \ldots \rho_i)(\rho_1 \ldots \rho_{i-1}) ~\text{ (By \eqref{nvtn 10}  and \eqref{nvtn 12})}\\
& =& (\rho_{j-1} \ldots \rho_{i+2})(\rho_j \ldots \rho_{i+3})\dashuline{(\rho_{i-1} \ldots \rho_1)(\rho_i \ldots \rho_2)(\rho_{i+1} \ldots \rho_3 )(\rho_{i+2} \ldots \rho_4)}(\rho_2 \rho_1 \rho_3 \rho_2) \\
& & s_1(\rho_2 \rho_3 \rho_4 \ldots \rho_{i+2} \rho_{i+3} \ldots \rho_j)(\rho_1 \rho_2 \ldots \rho_{i+1} \rho_{i+2} \ldots \rho_{j-1}) s_1(\rho_2 \ldots \rho_i)(\rho_1 \ldots \rho_{i-1}) ~\text{ (By \eqref{nvtn 10})}\\
& = &(\rho_{j-1} \ldots \rho_{i+2})(\rho_j \ldots \rho_{i+3})(\rho_{i-1} \rho_i \rho_{i+1} \rho_{i+2}) \ldots(\rho_1 \dashuline{\rho_2 \rho_3 \rho_4)(\rho_2} \rho_1 \rho_3 \rho_2) \\
&& s_1(\rho_2 \ldots \rho_j)(\rho_1 \ldots \rho_{j-1}) s_1(\rho_2 \ldots \rho_i)(\rho_1 \ldots \rho_{i-1})  ~\text{ (By \eqref{vtn rho eq3})}\\
&& \quad \quad \quad \vdots \quad \quad \quad \vdots \quad \quad \quad \vdots\\
&=&(\rho_{j-1} \ldots \rho_{i+2}) \rho_{i+1}(\rho_j \ldots \rho_{i+3})(\rho_{i-1} \rho_i \rho_{i+1} \rho_{i+2}) \ldots(\dashuline{\rho_1 \rho_2} \rho_3 \rho_4)(\dashuline{\rho_1} \rho_3 \rho_2) \\
& &s_1(\rho_2 \ldots \rho_j)(\rho_1 \ldots \rho_{j-1}) s_1(\rho_2 \ldots \rho_i)(\rho_1 \ldots \rho_{i-1})\\
&& \quad \quad \quad \vdots \quad \quad \quad \vdots \quad \quad \quad \vdots\\
& = & (\rho_{j-1} \ldots \rho_{i+2}) \rho_{i+1} \rho_i(\rho_j \ldots \rho_{i+3})(\rho_{i-1} \rho_i \rho_{i+1} \rho_{i+2}) \ldots(\rho_1 \rho_2 \dashuline{\rho_3 \rho_4)(\rho_3} \rho_2) \\
&& s_1(\rho_2 \ldots \rho_j)(\rho_1 \ldots \rho_{j-1}) s_1(\rho_2 \ldots \rho_i)(\rho_1 \ldots \rho_{i-1})\\
&& \quad \quad \quad \vdots \quad \quad \quad \vdots \quad \quad \quad \vdots\\
& = & (\rho_{j-1} \ldots \rho_i)(\rho_j \ldots \rho_{i+3}) \rho_{i+2}(\rho_{i-1} \rho_i \rho_{i+1} \rho_{i+2}) \ldots(\rho_1 \dashuline{\rho_2 \rho_3 \rho_4) \rho_2} \\
&& s_1(\rho_2 \ldots \rho_j)(\rho_1 \ldots \rho_{j-1}) s_1(\rho_2 \ldots \rho_i)(\rho_1 \ldots \rho_{i-1}) \\
&& \quad \quad \quad \vdots \quad \quad \quad \vdots \quad \quad \quad \vdots\\
&= & (\rho_{j-1} \ldots \rho_i)(\rho_j \ldots \rho_{i+3}) \rho_{i+2} \rho_{i+1} \dashuline{(\rho_{i-1} \rho_i \rho_{i+1} \rho_{i+2}) \ldots(\rho_1 \rho_2 \rho_3 \rho_4)}\\
&& s_1(\rho_2 \ldots \rho_j)(\rho_1 \ldots \rho_{j-1}) s_1(\rho_2 \ldots \rho_i)(\rho_1 \ldots \rho_{i-1}) \\
&= & (\rho_{j-1} \ldots \rho_i)(\rho_j \ldots \rho_{i+1}) \dashuline{(\rho_{i-1} \ldots \rho_1)}(\rho_i \ldots \rho_2)(\rho_{i+1} \ldots \rho_3)(\rho_{i+2} \ldots \rho_4) \\
& &\dashuline{s_1}(\rho_2 \ldots \rho_j)(\rho_1 \ldots \rho_{j-1}) s_1(\rho_2 \ldots \rho_i)(\rho_1 \ldots \rho_{i-1}) ~\text{ (By \eqref{vtn rho eq3})} \\
&= & (\rho_{j-1} \ldots \rho_1)(\rho_j \ldots \rho_2) s_1 \dashuline{(\rho_{i+1} \ldots \rho_3)(\rho_{i+2} \ldots \rho_4)(\rho_2 \ldots \rho_j)(\rho_1 \ldots \rho_{j-1})} s_1(\rho_2 \ldots \rho_i)(\rho_1 \ldots \rho_{i-1})\\
&& \text{ (By \eqref{nvtn 10} and \eqref{nvtn 12})}\\
&=& (\rho_{j-1} \ldots \rho_1)(\rho_j \ldots \rho_2) s_1 (\rho_2 \ldots \rho_j)(\rho_1 \ldots \rho_{j-1})(\rho_{i-1} \ldots \rho_1)(\rho_i \ldots \rho_2) s_1(\rho_2 \ldots \rho_i)(\rho_1 \ldots \rho_{i-1})
~\text{ (By \eqref{vtn rho eq2})}\\
&=& s_j s_i.
\end{eqnarray*}
\end{Small}
This completes the proof of the theorem.
\end{proof}
\medskip

Our main computational tool will be the Reidemeister-Schreier method \cite[Theorem 2.6]{MR0207802}. Given a subgroup $H$ of a group $G= \langle S \mid R\rangle$, we fix a Schreier system $\Lambda$ of coset representatives of $H$ in $G$. For an element $w \in G$, let $\overline{w}$ denote the unique coset representative of the coset of $w$ in the Schreier set $\Lambda$. Then, by  Reidemeister-Schreier Theorem, the subgroup $H$ has a presentation with generating set $$\left\{\gamma(\mu, a):=(\mu a)(\overline{\mu a})^{-1} ~\mid~ \mu \in \Lambda~\text{ and } ~a \in S \right\}$$
and set of defining relations
$$\{\tau\left(\mu r \mu^{-1}\right) ~\mid~ \mu \in \Lambda~ \text{ and } ~r \in R\}.$$
Here, $\tau$ is the rewriting process, that is, given $g=g_1 g_2 \ldots g_k \in G$ for some $g_i \in S$, we have
$$
\tau(g) =\gamma(1, g_1) \gamma(\overline{g_1}, g_2) \cdots \gamma (\overline{g_1 g_2 \ldots g_{k-1}}, g_k).
$$

We now use Theorem \ref{reduced-presentation-n4} and the Reidemeister-Schreier method to give a presentation of $VT_n^{'}$ for $n \geq 2$. Since the abelianisation of $VT_n$ is isomorphic to the elementary abelian $2$-group of order $4$,  we can take the Schreier system of coset representatives to be $$\M = \{ 1, s_1, \rho_1, s_1\rho_1 \}.$$
In view of Theorem \ref{reduced-presentation-n4}, we take $S=\{s_1, \rho_1, \rho_2,\dots, \rho_{n-1}\}$ as a generating set for $VT_n$.

\subsection*{Generators of $VT_n^{'}$:}
The generating set $\{\gamma(\mu, a) \mid  \mu\in \M, ~a\in S \}$ of  $VT_n^{'}$ can be determined explicitly. More precisely, for each $2 \le i \le n-1$, we have
\begin{small}
\begin{equation*}
\begin{aligned}
\gamma(1, s_1) & = s_1 (\overline{s_1})^{-1} = 1,\\
\gamma(1, \rho_1) & = \rho_1 (\overline{\rho_1})^{-1} =  1,\\
\gamma(1, \rho_i) & = \rho_i (\overline{\rho_i})^{-1} = \rho_i\rho_1,\\
&&\\
\gamma(s_1, s_1) & = s_1s_1 (\overline{s_1s_1})^{-1} = 1,\\
\gamma(s_1, \rho_1) & =  s_1\rho_1 (\overline{s_1\rho_1})^{-1} =  1,\\
\gamma(s_1, \rho_i) & = s_1\rho_i (\overline{s_1\rho_i})^{-1} = s_1\rho_i \rho_1 s_1,
\end{aligned}
\begin{aligned}
\gamma(\rho_1, s_1) & = \rho_1s_1 (\overline{\rho_1s_1})^{-1} = (\rho_1s_1)^2,\\
\gamma(\rho_1, \rho_1) & = \rho_1\rho_1 (\overline{\rho_1\rho_1})^{-1} = 1,\\
\gamma(\rho_1, \rho_i) & = \rho_1\rho_i (\overline{\rho_1\rho_i})^{-1} = \rho_1\rho_i,\\
& &\\
\gamma(s_1\rho_1, s_1) & = s_1\rho_1s_1 (\overline{s_1\rho_1s_1})^{-1} = (s_1\rho_1)^2,\\
\gamma(s_1\rho_1, \rho_1) & = s_1\rho_1 \rho_1 (\overline{s_1\rho_1\rho_1})^{-1} = 1,\\
\gamma(s_1\rho_1, \rho_i) & = s_1\rho_1 \rho_i (\overline{s_1\rho_1 \rho_i})^{-1}=(s_1\rho_i \rho_1 s_1)^{-1}.
\end{aligned}
\end{equation*}
\end{small}
For $2 \le i \le n-1$, setting
$$x_i := \rho_i \rho_1, \quad z_i := s_1\rho_i \rho_1 s_1\quad \textrm{and} \quad  w := (\rho_1 s_1)^2,$$
we see that $VT_n^{'}$ is generated by the set 
$$\big\{ x_i, z_i, w ~\mid~ i=2, 3, \dots, n-1 \big\}.$$

\subsection*{Relations in  $VT_n^{'}$:}
We now determine the relations in $VT_n^{'}$ corresponding to each relation in the presentation of $VT_n$ given by Theorem \ref{reduced-presentation-n4}.
\begin{enumerate}
\item First, we consider the relation $s_1^2=1$. Then we obtain
\begin{eqnarray*}
\tau(1s_1s_11) &=& \gamma(1, s_1)\gamma(\overline{s_1}, s_1)=1,\\
\tau(s_1s_1s_1s_1) &=& (\gamma(1, s_1)\gamma(s_1, s_1))^2=1,\\
\tau(\rho_1s_1s_1\rho_1) &=& \gamma(1, \rho_1)\gamma(\rho_1, s_1)\gamma(s_1\rho_1, s_1)\gamma(\rho_1, \rho_1)=1,\\
\tau(s_1\rho_1 (s_1 s_1) \rho_1s_1) &=& \gamma(1, s_1) \gamma(s_1, \rho_1)\gamma(s_1\rho_1, s_1)\gamma(\rho_1, s_1)\gamma(s_1 \rho_1, \rho_1)\gamma(s_1, s_1)=1.
\end{eqnarray*}
\item[]

\item  Next, we consider the relations $\rho_i^2=1$ for $1 \le i \le n-1$. This case gives
\begin{eqnarray*}
\tau(1\rho_i\rho_i1) &=& \gamma(1, \rho_i)\gamma(\overline{\rho_i}, \rho_i)=1,\\
\tau(s_1 \rho_i \rho_is_1) &=& \gamma(1, s_1)\gamma(s_1, \rho_i)\gamma(s_1 \rho_1, \rho_i)\gamma(s_1, s_1)=1,\\
\tau(\rho_1 \rho_i \rho_i \rho_1) &=& \gamma(1, \rho_1)\gamma(\rho_1, \rho_i)\gamma(1, \rho_i)\gamma(\rho_1, \rho_1)=1,\\
\tau(s_1\rho_1(\rho_1\rho_1)\rho_1 s_1) &=& \gamma(1, s_1) \gamma(s_1, \rho_1)\gamma(s_1\rho_1, \rho_1)\gamma(s_1, \rho_1)\gamma(s_1 \rho_1, \rho_1)\gamma(s_1, s_1)=1,\\
\tau(s_1\rho_1(\rho_i\rho_i) \rho_1 s_1) &=& \gamma(1, s_1) \gamma(s_1, \rho_1)\gamma(s_1\rho_1, \rho_i)\gamma(s_1, \rho_i)\gamma(s_1 \rho_1, \rho_1)\gamma(s_1, s_1)= z_i^{-1}z_i=1\\
&& \textrm{for}~ i\geq 2.
\end{eqnarray*}
\item[]

\item Consider the relations $(\rho_i \rho_j)^2=1$ for $|i-j| \geq 2$. In this case, we get
\begin{eqnarray*}
\tau(1(\rho_i\rho_j)^21) &=& (\gamma(1, \rho_i)\gamma(\rho_1, \rho_j))^2\\
&=&\begin{cases}
x_j^{-2}~ &  \quad \textrm{for} \quad  j \geq 3, \\
(x_ix_{j}^{-1})^2 &\quad \textrm{for} \quad  i \geq 2 \text{ and } i+2 \leq j \leq n-1,
\end{cases}\\
& &\\
\tau(s_1(\rho_i\rho_j)^2 s_1) &=& (\gamma(s_1, \rho_i)\gamma(s_1 \rho_1, \rho_j))^2\\
&=& \begin{cases}
z_j^{-2}& \quad \textrm{for}\quad  3 \leq j \leq n-1,\\
(z_iz_{j}^{-1})^2 & \quad \textrm{for} \quad 2 \leq i \leq n-2  \quad \text{and} \quad j \geq i+2,
\end{cases}\\
& &\\
\tau(\rho_1 (\rho_i\rho_j)^2 \rho_1) &=&(\gamma(\rho_1, \rho_i)\gamma(1, \rho_j))^2\\
&=&\begin{cases}
x_j^{2} & \quad \textrm{for}\quad  3 \leq j \leq n-1, \\
(x_i^{-1}x_{j})^2 & \quad \textrm{for} \quad  i \geq 2 \quad \text{and} \quad i+2 \leq j \leq n-1,
\end{cases}\\
& &\\
\tau(s_1 \rho_1(\rho_i\rho_j)^2 \rho_1 s_1) &=& (\gamma(s_1 \rho_1, \rho_i)\gamma(s_1, \rho_j))^2 \\
&=&\begin{cases}
z_j^{2} & \quad \textrm{for} \quad j \geq 3, \\
(z_i^{-1}z_{j})^2 & \quad \textrm{for}\quad i \geq 2  \quad \text{and} \quad i+2 \leq j \leq n-1.
\end{cases}
\end{eqnarray*}

Thus, the non-trivial relations  in $VT_n^{'}$ are
\begin{eqnarray}
\label{vtn comm rel 5} x_j^2&= 1&  \quad \textrm{for} \quad 3\leq j\leq n-1,\\
\label{vtn comm rel 6} z_j^{2}&=1&   \quad \textrm{for}\quad  3\leq j\leq n-1,\\
\label{vtn comm rel 7} (x_ix_{j}^{-1})^2&= 1&  \quad \textrm{for} \quad 2 \leq i \leq n-2  \quad \text{and} \quad j \geq i+2,\\
\label{vtn comm rel 8} (z_iz_{j}^{-1})^2&=1&   \quad \textrm{for} \quad 2 \leq i \leq n-2  \quad \text{and} \quad j \geq i+2.
\end{eqnarray}

\item Considering the relations $(\rho_i \rho_{i+1})^3=1$ for $1 \le i \le n-2$ give
\begin{eqnarray*}
\tau(1(\rho_i\rho_{i+1})^3 1) &=& \big ( \gamma(1, \rho_i)\gamma(\rho_1, \rho_{i+1})\big ) ^3\\
&=&\begin{cases}
x_2^{-3} & \quad \textrm{for} \quad i=1,\\
(x_ix_{i+1}^{-1})^3 & \quad \textrm{for} \quad 2\leq i\leq n-2,
\end{cases}\\
& &\\
\tau(s_1(\rho_i\rho_{i+1})^3s_1)&=&\big ( \gamma(s_1, \rho_i)\gamma(s_1 \rho_1, \rho_{i+1})\big )^3\\
&=&\begin{cases}
z_2^{-3} & \quad \textrm{for} \quad i=1,\\
(z_iz_{i+1}^{-1})^3& \quad \textrm{for} \quad 2 \leq i \leq n-2,
\end{cases}\\
& &\\
\tau(\rho_1(\rho_i\rho_{i+1})^3 \rho_1)&=& \big ( \gamma(\rho_1, \rho_i)\gamma(1, \rho_{i+1})\big ) ^3\\
&=&\begin{cases}
x_2^{3}& \quad \textrm{for} \quad i=1,\\
(x_i^{-1}x_{i+1})^3 & \quad \textrm{for} \quad  2 \leq i \leq n-2,
\end{cases}\\
& &\\
\tau(s_1 \rho_1(\rho_i\rho_{i+1})^3 \rho_1 s_1) &=& \big ( \gamma(s_1 \rho_1, \rho_i)\gamma(s_1, \rho_{i+1})\big ) ^3\\
&=&\begin{cases}
z_2^{3}&\quad \textrm{for} \quad i=1,\\
(z_i^{-1}z_{i+1})^3 & \quad \textrm{for} \quad 2\leq i\leq n-2.
\end{cases}
\end{eqnarray*}
Thus, we have the following non-trivial relations  in $VT_n^{'}$
\begin{eqnarray}
\label{vtn comm rel 1} x_2^3&=1,&\\
\label{vtn comm rel 2} z_2^{3}&=1,&\\
\label{vtn comm rel 3} (x_ix_{i+1}^{-1})^3&=1& \quad \textrm{for} \quad 2\leq i\leq n-2,\\
\label{vtn comm rel 4} (z_iz_{i+1}^{-1})^3&=1& \quad \textrm{for}\quad 2\leq i\leq n-2.
\end{eqnarray}

\item Considering the relations $(\rho_i s_1)^2=1$ for $ 3 \leq i \leq n-1$ give
\begin{eqnarray*}
\tau(1(\rho_i s_1)^2 1) &=& \gamma(1, \rho_i)\gamma(\rho_1, s_1) \gamma(s_1 \rho_1, \rho_i) = x_i w z_i^{-1} \quad \textrm{for}\quad i \geq 3,\\
&&\\
\tau(s_1(\rho_i s_1)^2 s_1) &=& \gamma(s_1, \rho_i)\gamma(s_1\rho_1, s_1) \gamma(\rho_1, \rho_i) = z_i w^{-1} x_i^{-1} \quad \textrm{for} \quad i \geq 3,\\
&&\\
\tau(\rho_1 (\rho_i s_1)^2 \rho_1) &=& \gamma(\rho_1, \rho_i) \gamma(s_1, \rho_i) \gamma(s_1\rho_1, s_1) = x_i^{-1}z_i w^{-1} \quad \textrm{for} \quad i \geq 3,\\
&&\\
\tau( s_1 \rho_1 (\rho_i s_1)^2 \rho_1 s_1) &=&  \gamma(s_1 \rho_1, \rho_i) \gamma(1, \rho_i) \gamma( \rho_1, s_1) = z_i^{-1} x_i w \quad \textrm{for} \quad i \geq 3.
\end{eqnarray*}

Thus, we have the following non-trivial relations in  $VT_n^{'}$
\begin{eqnarray}\label{zi in terms of xi}
\label{vtn comm rel 9} z_i &=& x_i w   \quad \textrm{~for} \quad  3 \leq i \leq n-1.
\end{eqnarray}
\item[]

\item Finally, we consider the relation $(s_1\rho_2 \rho_1\rho_3 \rho_2)^4 = 1$. Then we obtain
\begin{eqnarray*}
&& \tau(1(s_1\rho_2\rho_3\rho_1\rho_2s_1\rho_2\rho_1\rho_3\rho_2)^2 1)\\
&=& (\gamma(s_1,\rho_2)\gamma(s_1 \rho_1, \rho_3)\gamma(s_1 \rho_1, \rho_2)\gamma(1, \rho_2)\gamma(1, \rho_3)\gamma(\rho_1,\rho_2))^2\\
&=& (z_2z_3^{-1}z_2^{-1}x_2x_3x_2^{-1})^2,\\
&&\\
&& \tau(s_1(s_1\rho_2\rho_3\rho_1\rho_2s_1\rho_2\rho_1\rho_3\rho_2)^2 s_1)\\
&=& (\gamma(1,\rho_2)\gamma(\rho_1, \rho_3)\gamma(\rho_1, \rho_2)\gamma(s_1, \rho_2)\gamma(s_1, \rho_3)\gamma(s_1\rho_1,\rho_2))^2\\
&=& (x_2x_3^{-1}x_2^{-1}z_2z_3z_2^{-1})^2,\\
&& \\
&& \tau(\rho_1(s_1\rho_2\rho_3\rho_1\rho_2s_1\rho_2\rho_1\rho_3\rho_2)^2 \rho_1)\\
&=& (\gamma(\rho_1, s_1)\gamma(s_1\rho_1,\rho_2)\gamma(s_1, \rho_3)\gamma(s_1, \rho_2)\gamma(s_1\rho_1, s_1)\gamma(\rho_1, \rho_2)\gamma(\rho_1, \rho_3)\gamma(1,\rho_2))^2\\
&=& (w z_2^{-1}z_3z_2 w^{-1}x_2^{-1}x_3^{-1}x_2)^2,\\
&& \\
&&\tau(s_1 \rho_1 (s_1\rho_2\rho_3\rho_1\rho_2s_1\rho_2\rho_1\rho_3\rho_2)^2 \rho_1 s_1)\\
&=&   (\gamma(s_1 \rho_1, s_1)\gamma(\rho_1,\rho_2)\gamma(1, \rho_3)\gamma(1, \rho_2)\gamma(\rho_1, s_1) \gamma(s_1\rho_1, \rho_2)\gamma(s_1 \rho_1, \rho_3)\gamma(s_1,\rho_2))^2\\
&=& (w^{-1}x_2^{-1}x_3x_2 w z_2^{-1}z_3^{-1}z_2)^2.
\end{eqnarray*}

Thus, we get the following two non-trivial relations in  $VT_n^{'}$
\begin{eqnarray}
\label{vtn comm rel 10}  (z_2z_3^{-1}z_2^{-1}x_2x_3x_2^{-1})^2 &= &1,\\
\label{vtn comm rel 11} (w z_2^{-1}z_3z_2 w^{-1}x_2^{-1}x_3^{-1}x_2)^2 &=& 1.
\end{eqnarray}
\end{enumerate}

We can eliminate the generators $z_i$ for $3 \leq i \leq n-1$ using \eqref{zi in terms of xi}. Setting $z:=z_2$, we see that the relations \eqref{vtn comm rel 1} through \eqref{vtn comm rel 11}  yield the following result.

\begin{theorem}\label{commutator-subgroup-vtn}
The following hold for the commutator subgroup $VT_n^{'}$ of $VT_n$:
\begin{enumerate}
\item $VT_2^{'} \cong \mathbb{Z}$ and is generated by $(\rho_1 s_1)^2$.

\item $VT_3^{'} \cong \mathbb{Z}_3 * \mathbb{Z}_3 * \mathbb{Z}$ and has a presentation
$$ \big\langle \rho_2\rho_1,  s_1\rho_2\rho_1s_1, (\rho_1 s_1)^2 ~\mid~ (\rho_2\rho_1)^3 = (s_1\rho_2\rho_1s_1)^3=1 \big\rangle.$$

\item For  $n \geq 4$, $VT_n^{'}$ has a presentation with generating set
$$\big\{ x_i, z, w~\mid~ i=2, 3, \dots, n-1 \big\}$$
and  defining relations  as follows:
\begin{eqnarray*}
x_2^3&=&1,\\
x_j^2&=&1 \quad \textrm{for} \quad 3\leq j\leq n-1,\\
z^{3}&=&1,\\
(x_ix_{i+1}^{-1})^3&=&1 \quad \textrm{for} \quad 2\leq i\leq n-2,\\
(x_ix_{j}^{-1})^2&=&1  \quad \textrm{for} \quad 2 \leq i \leq n-2 \quad \text{and} \quad j \geq i+2,\\
(x_j w)^{2}&=&1  \quad \textrm{for} \quad 3\leq j\leq n-1,\\
(z w^{-1}x_3^{-1})^3&=&1,\\
(z w^{-1}x_j^{-1})^2&=&1  \quad \textrm{for} \quad 4 \leq j\leq n-1,\\
(z w^{-1}x_3^{-1}z^{-1}x_2x_3x_2^{-1})^2&=&1,\\
(w z^{-1}x_3 w z w^{-1}x_2^{-1}x_3^{-1}x_2)^2&=&1.
\end{eqnarray*}
\end{enumerate}
\end{theorem}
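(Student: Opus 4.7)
The plan is to read off the presentations directly from the Reidemeister--Schreier computation already carried out, after disposing of the two small cases $n = 2$ and $n = 3$ separately. For $n = 2$, we have $VT_2 = \langle s_1, \rho_1 \mid s_1^2 = \rho_1^2 = 1\rangle$, which is the infinite dihedral group, and its commutator subgroup is immediately seen to be the infinite cyclic group generated by $(\rho_1 s_1)^2$, yielding assertion (1).

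For $n = 3$, I would apply the same Reidemeister--Schreier setup (Schreier system $\M = \{1, s_1, \rho_1, s_1\rho_1\}$) to the reduced presentation in Theorem \ref{reduced-presentation-n4}(1). The surviving Reidemeister--Schreier generators collapse to $x_2 = \rho_2\rho_1$, $z_2 = s_1\rho_2\rho_1 s_1$ and $w = (\rho_1 s_1)^2$. Rewriting the four $\M$-conjugates of $(\rho_1\rho_2)^3 = 1$ produces only the two independent relations $x_2^3 = 1$ and $z_2^3 = 1$; no relation ever involves $w$, since $w$ arose only from the involutions $s_1^2$ and $\rho_1^2$, whose rewritten images are trivial. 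Hence $VT_3'$ is freely presented by $x_2, z_2, w$ modulo $x_2^3 = z_2^3 = 1$, which is exactly $\mathbb{Z}_3 * \mathbb{Z}_3 * \mathbb{Z}$, giving assertion (2).

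For $n \geq 4$, the whole generator list $\{x_i, z_i, w\}$ and relations \eqref{vtn comm rel 1}--\eqref{vtn comm rel 11} have already been computed before the theorem. The remainder is bookkeeping: use \eqref{zi in terms of xi}, i.e.\ $z_i = x_i w$ for $3 \leq i \leq n-1$, to Tietze-eliminate every $z_i$ with $i \geq 3$, and write $z := z_2$. Substituting into \eqref{vtn comm rel 6} gives $(x_j w)^2 = 1$ for $3 \leq j \leq n-1$; substituting into \eqref{vtn comm rel 4} and \eqref{vtn comm rel 8} with both indices at least $3$ yields relations of the form $(x_i x_{i+1}^{-1})^3 = 1$ and $(x_i x_j^{-1})^2 = 1$, which are already present in \eqref{vtn comm rel 3} and \eqref{vtn comm rel 7} and therefore redundant; the $i = 2$ cases become $(zw^{-1}x_3^{-1})^3 = 1$ and $(zw^{-1}x_j^{-1})^2 = 1$. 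Finally, the two fourth-power relations \eqref{vtn comm rel 10} and \eqref{vtn comm rel 11} translate to the last two relations in (3) after the substitutions $z_2 \mapsto z$, $z_3 \mapsto x_3 w$.

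The main obstacle will not be conceptual but combinatorial: one must carefully verify that no non-obvious consequences of \eqref{vtn comm rel 1}--\eqref{vtn comm rel 11} are lost in the elimination, and that the two translated fourth-power relations are recorded in a normalised form consistent with the substitutions. A particular point of care is to confirm that $z^3 = 1$ is retained (it cannot be derived from the $x_i$-relations alone, since $z$ is independent of the $x_i$), and that the cyclic symmetry among the four $\M$-conjugates of each defining relator produces no additional independent constraint beyond the one recorded.
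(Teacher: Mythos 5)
Your proposal is correct and follows essentially the same route as the paper: the paper's "proof" of this theorem is precisely the preceding Reidemeister--Schreier computation (with the Schreier system $\{1, s_1, \rho_1, s_1\rho_1\}$ applied to the reduced presentation of Theorem \ref{reduced-presentation-n4}), followed by the Tietze elimination of $z_i = x_i w$ for $i \ge 3$ and the renaming $z := z_2$, with the small cases $n=2,3$ handled by direct inspection exactly as you describe. All of your substitutions into relations \eqref{vtn comm rel 1}--\eqref{vtn comm rel 11} check out against the stated presentation.
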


It is known that the second and the third term of the lower central series coincide for the braid group $B_n$ when $n \geq 3$ \cite{MR0251712} and coincide for the virtual braid group $VB_n$ when $n \geq 4$ \cite[Proposition 7(c)]{MR2493369}. The same assertion does not hold for twin groups \cite[Theorem 4.5]{MR4017601}. However, it turns out that the assertion holds for virtual twin groups.

\begin{proposition}\label{stable-lower-central-series-tn}
 $VT_n^{'} = \gamma_3 (VT_n)$ for $n \geq 3$.
\end{proposition}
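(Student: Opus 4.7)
My plan is to take the generating set $\{x_2,\ldots,x_{n-1}, z, w\}$ of $VT_n'$ furnished by Theorem \ref{commutator-subgroup-vtn} and show that each generator lies in $\gamma_3(VT_n)$, since the reverse inclusion $\gamma_3(VT_n) \subseteq VT_n'$ is automatic. The central observation I would use is that in the quotient $\bar{G} := VT_n/\gamma_3(VT_n)$ the image $\bar{A}$ of $VT_n'$ is central, because $\gamma_3 = [VT_n, VT_n']$; so any element of $VT_n'$ is congruent to all of its $VT_n$-conjugates modulo $\gamma_3$. All computations below can then be carried out in the abelian group $\bar{A}$.

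First I would show $\bar{x_2} = 1$: the identity $\rho_1 x_2 \rho_1^{-1} = \rho_1\rho_2 = x_2^{-1}$ gives $x_2^{-2} = [\rho_1, x_2] \in \gamma_3$, so $\bar{x_2}^2 = 1$, and combined with the relation $x_2^3 = 1$ this forces $\bar{x_2} = 1$. For $3 \le i \le n-1$, I would induct on $i$ using $\bar{x_i}^2 = 1$ and the braid-type relation $(x_{i-1}x_i^{-1})^3 = 1$; centrality of $\bar A$ lets the latter expand as $\bar{x_{i-1}}^3\bar{x_i}^{-3} = 1$, and the induction hypothesis together with $\bar{x_i}^2 = 1$ yields $\bar{x_i} = 1$. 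For $z$, the identity $s_1 z s_1^{-1} = s_1(s_1\rho_2\rho_1 s_1)s_1 = \rho_2\rho_1 = x_2$ together with centrality gives $\bar{z} = \bar{x_2} = 1$. Finally, conjugating $w = (\rho_1 s_1)^2$ by $\rho_1$ produces $(s_1\rho_1)^2 = w^{-1}$, so $\bar{w}^2 = 1$; the relation $(zw^{-1}x_3^{-1})^3 = 1$ from Theorem \ref{commutator-subgroup-vtn}(3) reduces in $\bar G$ to $\bar w^{-3} = 1$, and combined with $\bar w^2 = 1$ this forces $\bar w = 1$.

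The main obstacle I anticipate is the case $n = 3$, where the relation $(zw^{-1}x_3^{-1})^3 = 1$ is unavailable (there is no $x_3$), leaving only $\bar w^2 = 1$. In fact, the surjection $VT_3 \to D_4 = \langle a, b \mid a^2 = b^2 = (ab)^4 = 1\rangle$ defined by $s_1 \mapsto a$ and $\rho_1, \rho_2 \mapsto b$ sends $w \mapsto (ba)^2 = (ab)^2 \neq 1$, and since $D_4$ has nilpotency class $2$ the image of $\gamma_3(VT_3)$ in $D_4$ is trivial. So $\bar w$ has order exactly $2$ in $\bar A$ when $n = 3$, which suggests the statement likely needs the stronger hypothesis $n \ge 4$, or else a substantially different argument for $n = 3$ exploiting some additional commutator identity that forces $\bar w$ to have odd order.
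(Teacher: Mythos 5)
Your argument for $n\ge 4$ is correct, and it takes a genuinely different route from the paper's. The paper never touches the generators of $VT_n^{'}$: it shows directly that $VT_n/\gamma_3(VT_n)$ is abelian, using the identities $\rho_{i+1}=\rho_i[\rho_i,[\rho_i,\rho_{i+1}]]$ and $s_{i+1}=s_i[[\rho_{i+1},[\rho_{i+1},\rho_i]],s_i]^{-1}$ to collapse all $\tilde\rho_i$ to $\tilde\rho_1$ and all $\tilde s_i$ to $\tilde s_1$, and then invoking the relation $\rho_3 s_1=s_1\rho_3$ together with $\tilde\rho_3=\tilde\rho_1$ to conclude $\tilde\rho_1\tilde s_1=\tilde s_1\tilde\rho_1$. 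You instead kill each generator $x_i$, $z$, $w$ of $VT_n^{'}$ one at a time inside the central subgroup $\overline{VT_n^{'}}\le VT_n/\gamma_3(VT_n)$ by playing coprime exponents against each other. Both arguments are valid for $n\ge 4$; the paper's is shorter, while yours makes explicit which relations of Theorem \ref{commutator-subgroup-vtn} are responsible.

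More importantly, your reservation about $n=3$ is not a defect of your approach but a genuine error in the proposition as stated. The paper's own proof of the commuting step uses $\rho_3 s_1=s_1\rho_3$, a relation that exists only when $n\ge 4$, so it too silently assumes $n\ge 4$. Your $D_4$ quotient settles the matter: the assignment $s_1\mapsto a$, $\rho_1,\rho_2\mapsto b$ respects the presentation $VT_3=\langle s_1,\rho_1,\rho_2\mid s_1^2=\rho_1^2=\rho_2^2=(\rho_1\rho_2)^3=1\rangle$ of Theorem \ref{reduced-presentation-n4}, sends $w=(\rho_1 s_1)^2\in VT_3^{'}$ to the nontrivial central element $(ab)^2$, and annihilates $\gamma_3(VT_3)$ because $D_4$ has nilpotency class $2$. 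Hence $VT_3^{'}\ne\gamma_3(VT_3)$ and the hypothesis must be strengthened to $n\ge 4$; no alternative argument for $n=3$ can exist. (Corollary \ref{vtn residually nilpotent} is unaffected, since $VT_3$ contains $\langle\rho_1,\rho_2\rangle\cong S_3$, which is already not residually nilpotent.)
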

 
\begin{proof}
It is enough to prove that $VT_n / \gamma_3 (VT_n)$ is abelian. The group $VT_n / \gamma_3 (VT_n)$ is generated by 
$$\tilde{s}_i := s_i \gamma_3 (VT_n) \quad \textrm{and} \quad \tilde{\rho}_i := \rho_i \gamma_3 (VT_n),$$ where $1 \le i \le n-1$. It is easy to see that 
$$\rho_{i+1} = \rho_i [ \rho_i, [\rho_i, \rho_{i+1}]] \quad \textrm{and} \quad s_{i+1} = s_i {[[\rho_{i+1}, [\rho_{i+1}, \rho_i]], s_i]}^{-1}.$$
This gives 
$$\tilde{\rho}_{i+1}= \rho_{i+1} \gamma_3 (VT_n)= \rho_i [ \rho_i, [\rho_i, \rho_{i+1}]] \gamma_3 (VT_n)= \rho_i \gamma_3 (VT_n) = \tilde{\rho}_i$$
and 
$$\tilde{s}_{i+1} = s_{i+1} \gamma_3 (VT_n)= s_i {[[\rho_{i+1}, [\rho_{i+1}, \rho_i]], s_i]}^{-1}\gamma_3 (VT_n) =s_i \gamma_3 (VT_n) = \tilde{s}_i. $$
for each $1 \le i \le n-2$. Since
$$\tilde{\rho}_{1} \tilde{s}_{1} = \tilde{\rho}_{3} \tilde{s}_{1} = \tilde{s}_{1} \tilde{\rho}_{3} = \tilde{s}_{1} \tilde{\rho}_{1},$$
the assertion follows.
\end{proof}

\begin{corollary}\label{vtn residually nilpotent}
$VT_n$ is residually nilpotent if and only if $n =2$. 
\end{corollary}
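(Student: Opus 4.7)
The plan is to split the statement into the two directions and use Proposition~\ref{stable-lower-central-series-tn} as the main input for $n \ge 3$, while treating $n = 2$ by a direct, low-tech computation.

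For $n \ge 3$, the argument is essentially a one-line consequence of Proposition~\ref{stable-lower-central-series-tn}. That proposition asserts $\gamma_2(VT_n) = VT_n^{'} = \gamma_3(VT_n)$, so the lower central series stabilises from the second term onward: by induction $\gamma_{k+1}(VT_n) = [\gamma_k(VT_n), VT_n] = [\gamma_2(VT_n), VT_n] = \gamma_3(VT_n) = VT_n^{'}$ for every $k \ge 2$. Hence $\bigcap_{k \ge 1} \gamma_k(VT_n) = VT_n^{'}$, and it remains to note that $VT_n^{'} \neq 1$. This is clear from Theorem~\ref{commutator-subgroup-vtn}, since $VT_3^{'} \cong \mathbb{Z}_3 * \mathbb{Z}_3 * \mathbb{Z}$ and, for $n \ge 4$, the explicit presentation of $VT_n^{'}$ contains the order-three element $x_2$. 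So $VT_n$ is not residually nilpotent.

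For $n = 2$, the defining relations collapse to $s_1^2 = \rho_1^2 = 1$, giving $VT_2 \cong \mathbb{Z}_2 * \mathbb{Z}_2$, the infinite dihedral group $D_\infty$. I would show residual nilpotence by an explicit computation of the lower central series. Set $c := (\rho_1 s_1)^2$; the cyclic subgroup $\langle c \rangle \cong \mathbb{Z}$ is normal of index $4$, and a short calculation using $\rho_1^2 = s_1^2 = 1$ gives $s_1 c s_1^{-1} = \rho_1 c \rho_1^{-1} = c^{-1}$, so $[c, s_1] = [c, \rho_1] = c^2$. Since $\gamma_2(VT_2) = \langle c \rangle$ (this is forced by $VT_2^{\mathrm{ab}} \cong \mathbb{Z}_2 \times \mathbb{Z}_2$), a straightforward induction then yields $\gamma_k(VT_2) = \langle c^{2^{k-2}} \rangle$ for all $k \ge 2$, and therefore $\bigcap_{k \ge 1} \gamma_k(VT_2) = \{1\}$.

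There is no genuine obstacle: the harder direction ($n \ge 3$) is already packaged into Proposition~\ref{stable-lower-central-series-tn}, and the easy direction ($n = 2$) could alternatively be dispatched by quoting that $D_\infty$ is residually a finite $2$-group and hence residually nilpotent. The only mild care needed is to verify that $VT_n^{'}$ is nontrivial for $n \ge 3$, which is transparent from the presentations already obtained.
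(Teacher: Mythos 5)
Your argument is correct and is essentially the derivation the paper intends: the corollary is stated as an immediate consequence of Proposition~\ref{stable-lower-central-series-tn}, whose equality $\gamma_2(VT_n)=\gamma_3(VT_n)$ forces the lower central series to stabilise at the nontrivial subgroup $VT_n^{'}$ for $n\ge 3$, while $VT_2\cong \mathbb{Z}_2*\mathbb{Z}_2\cong D_\infty$ is residually nilpotent by the standard computation $\gamma_k(D_\infty)=\langle c^{2^{k-2}}\rangle$. The only micro-quibble is that for $n\ge 4$ the nontriviality of $x_2$ in the presented group deserves a word (e.g.\ its image in $S_n$ is a $3$-cycle); alternatively, $VT_n^{'}\neq 1$ follows at once from the surjection onto the non-abelian group $S_n$.
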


We conclude the section with a result on freeness of commutator subgroup of $PVT_n$. A graph is called {\it chordal}  if each of its cycles with more than three  vertices has a chord (an edge joining two vertices that are not adjacent in the cycle).  A {\it clique} (or a complete subgraph) of a graph is a subset $C$ of vertices such that every two vertices in $C$ are connected by an edge. It is well-known that a graph is chordal if and only if its vertices can be ordered in such a way that the lesser neighbours of each vertex form a clique.

\begin{proposition}\label{commutator-subgroup-freeness}
The commutator subgroup of $PVT_n$ is free if and only if $n \le 4$.
\end{proposition}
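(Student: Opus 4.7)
The plan is to combine two inputs. First, by \cite{NaikNandaSingh2}, the pure virtual twin group $PVT_n$ is the right-angled Artin group $A_{\Gamma_n}$, where $\Gamma_n$ has vertex set $\{x_{ij} \mid 1 \le i < j \le n\}$ and an edge joining $x_{ij}$ and $x_{kl}$ precisely when $\{i,j\}\cap\{k,l\}=\emptyset$. Second, invoke the characterization (which explains the preceding discussion of chordal graphs and simplicial vertex orderings) that $[A_\Gamma, A_\Gamma]$ is free if and only if $\Gamma$ is chordal. The proposition then reduces to deciding chordality of $\Gamma_n$.

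For the characterization itself, the forward direction proceeds by induction on $|V(\Gamma)|$: chordality supplies a simplicial vertex $v$ and a decomposition $A_\Gamma = A_{\Gamma\setminus v} *_{\mathbb{Z}^{|N(v)|}} \mathbb{Z}^{|N(v)|+1}$, and since every standard abelian parabolic of a RAAG injects into the abelianization of $A_\Gamma$, the subgroup $[A_\Gamma,A_\Gamma]$ meets both the amalgamating edge group and the free abelian vertex group trivially. Its action on the Bass--Serre tree therefore has trivial edge stabilizers and vertex stabilizers of the form $[A_{\Gamma\setminus v}, A_{\Gamma\setminus v}]$, which are free by induction, so $[A_\Gamma,A_\Gamma]$ is itself free. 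For the converse, a non-chordal $\Gamma$ contains an induced cycle $C_k$ with $k\ge 4$, and since $A_{C_k}$ is a retract of $A_\Gamma$ it suffices to produce a $\mathbb{Z}^2$ inside $[A_{C_k}, A_{C_k}]$. The case $k=4$ is immediate from $A_{C_4}\cong F_2\times F_2$; for $k\ge 5$ one constructs such a $\mathbb{Z}^2$ from a commuting pair of long commutators built using the alternating pattern of commuting generators around the cycle.

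It then remains to check which $\Gamma_n$ are chordal. For $n\le 4$ each $\Gamma_n$ is acyclic: $\Gamma_2$ is a single vertex; $\Gamma_3$ is empty, since no two distinct $2$-subsets of $\{1,2,3\}$ are disjoint; and $\Gamma_4$ is the perfect matching $3K_2$. All three are (vacuously) chordal. For $n\ge 5$, the five vertices $x_{12}, x_{34}, x_{15}, x_{23}, x_{45}$ form an induced $5$-cycle in $\Gamma_n$: the consecutive pairs on this cyclically ordered list are index-disjoint (giving the five edges), while non-consecutive pairs share exactly one index (giving the required absence of chords). Hence $\Gamma_n$ is non-chordal, and the characterization delivers the failure of freeness.

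The main obstacle I anticipate is the converse direction of the characterization theorem, specifically producing an explicit non-free subgroup of $[A_{C_k}, A_{C_k}]$ when $k=5$. Because $\Gamma_5$ contains no induced $C_4$ (such a $C_4$ would require six pairwise distinct indices, impossible for $n=5$), the classical $F_2\times F_2$ shortcut is not available in the smallest case of interest, and the $C_5$ case must be handled by hand -- most cleanly by exhibiting two specific long commutators in the RAAG on the $5$-cycle that commute by virtue of all needed cross-relations being forced by the cyclic adjacency pattern.
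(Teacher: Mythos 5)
Your proposal is correct and follows essentially the same route as the paper: identify $PVT_n$ with the right-angled Artin group on the disjointness graph and invoke the theorem that the commutator subgroup of a RAAG is free if and only if the graph is chordal --- a result the paper simply cites from Panov--Ver\"{e}vkin, so your anticipated obstacle of re-proving the converse for the $5$-cycle by hand is moot. The only substantive difference is in certifying non-chordality for $n \ge 5$: you exhibit the explicit induced $5$-cycle on $x_{12}, x_{34}, x_{15}, x_{23}, x_{45}$ (which checks out), whereas the paper argues that no vertex ordering can be a perfect elimination ordering; both are valid and equally short.
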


\begin{proof}
By \cite[Theorem 3.3]{NaikNandaSingh2}, the pure virtual twin group  $PVT_n$ is an irreducible right-angled Artin group for each $n \ge 2$, and has a presentation
$$PVT_n= \big\langle \lambda_{i,j},~1 \leq i < j \leq n ~\mid~ \lambda_{i,j} \lambda_{k,l} =  \lambda_{k,l} \lambda_{i,j} \text{ for distinct integers } i, j, k, l \big\rangle,$$
where $\lambda_{i, i+1}= s_i \rho_i$ for each $1 \le i \le n-1$ and $\lambda_{i,j} = \rho_{j-1} \rho_{j-2} \dots \rho_{i+1} \lambda_{i, i+1} \rho_{i+1} \dots \rho_{j-2}  \rho_{j-1}$
for each $1 \leq i < j \leq n$ and $j \ne i+1$.  The assertion is now immediate for $n=2, 3$. 
\par
The graph of $PVT_4$ (see Figure \ref{graph-pvt4}) is vacuously chordal. 

\begin{figure}
\centering
\begin{minipage}{.50\textwidth}
\centering
\includegraphics[width=.6\linewidth]{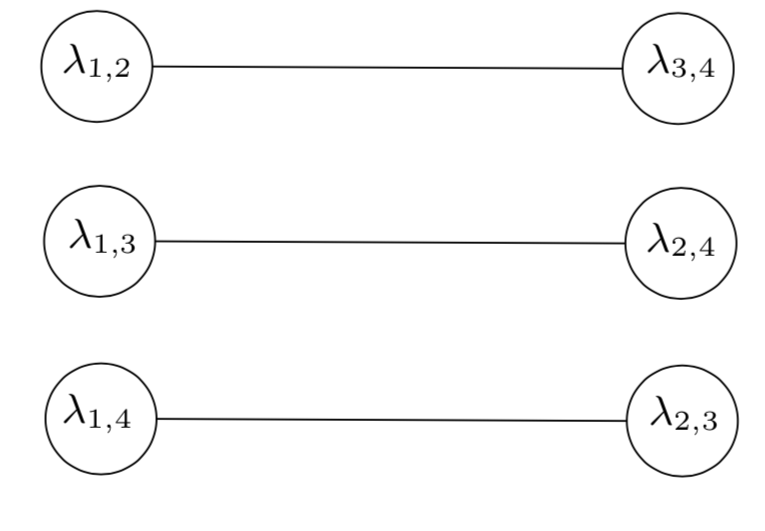}
\captionof{figure}{The graph of $PVT_4$.}
\label{graph-pvt4}
\end{minipage}
\begin{minipage}{.45\textwidth}
\centering
\includegraphics[width=.8\linewidth]{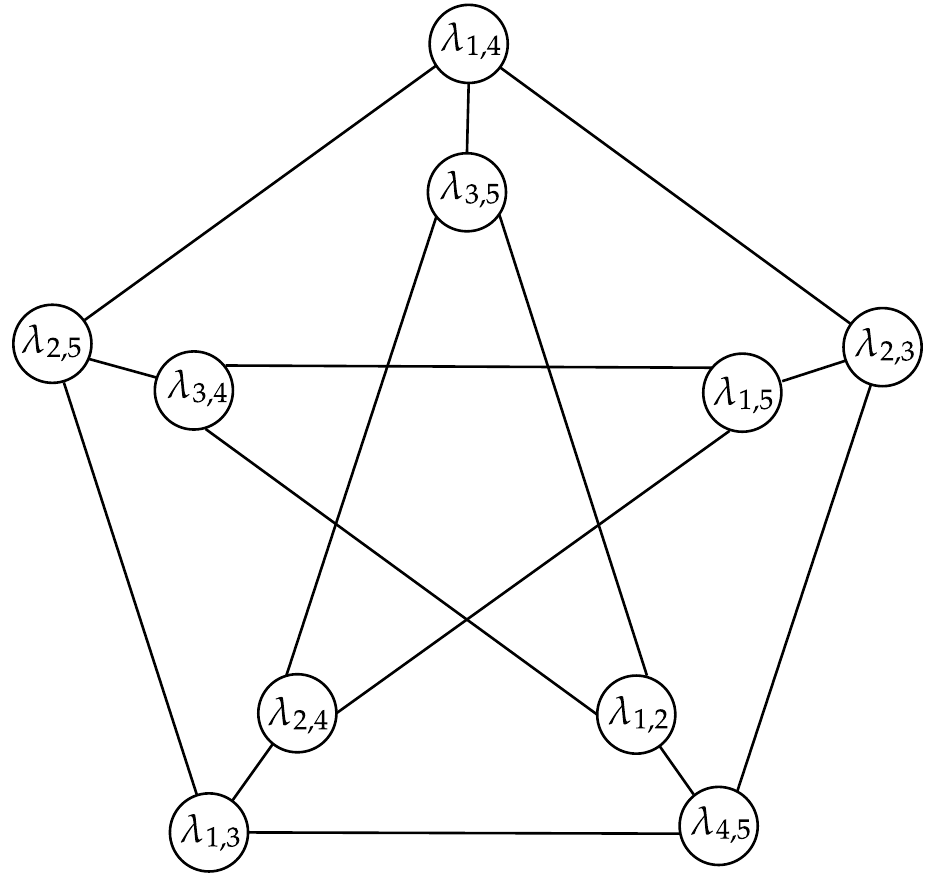}
\captionof{figure}{The graph of $PVT_5$.}
\label{graph-pvt5}
\end{minipage}
\end{figure}

By \cite[Corollary 4.4]{MR3588981}, the commutator subgroup of a right-angled Artin group is free if and only if its associated graph is chordal, and hence $PVT_4$ is free.
\par
For $n\geq 5$, fix an ordering on the vertex set of the graph, for example, it could be the lexicographic ordering in our case. Let $\lambda_{i, j}$ be a maximal vertex and $p, q, r \in \{1, 2, \ldots, n\}\setminus \{i, j\}$ be three distinct integers. Then both $\lambda_{p, q}$ and $\lambda_{p, r}$ are lesser neighbours of $\lambda_{i, j}$, but there cannot be an edge between $\lambda_{p, q}$ and $\lambda_{p, r}$. Thus, lesser neighbours of the vertex $\lambda_{i, j}$ do not form a clique, and hence the graph is not chordal.  
\end{proof}
\medskip

\section{Presentation of commutator subgroup of virtual triplet group}\label{sec commutator subgroup of virtual triplet}
In this section, we determine a finite presentation of the commutator subgroup $VL_n^{'}$ of $VL_n$. The approach is similar to that of the preceding section. We will need the following reduced presentation of $VL_n$.

\begin{theorem}\label{reduced-presentation-n4-Ln}
The virtual triplet group $VL_n$ ($n\geq 3$) has a presentation with generating set $\{y_1, \rho_1, \rho_2,\dots,\rho_{n-1}\}$ and defining relations as follows:
\begin{align} 
y_1^{2} &=1, \label{nvln 21} \\
\rho_i^{2} &= 1 \quad \textrm{for} \quad 1 \le i \le n-1,\label{nvln 22} \\
\rho_i\rho_j &= \rho_j\rho_i \quad \textrm{for} \quad |i - j| \geq 2,\label{nvln 23} \\
\rho_i\rho_{i+1}\rho_i &= \rho_{i+1}\rho_i\rho_{i+1} \quad \textrm{for} \quad 1 \le i \le n-2,\label{nvln 24} \\
\rho_iy_1 &= y_1\rho_i \quad \textrm{for} \quad i \geq 3,\label{nvln 25} \\
(y_1\rho_1 \rho_2 y_1 \rho_2 \rho_1)^3 &= 1. \label{nvln 26}
\end{align}
\end{theorem}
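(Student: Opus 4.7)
The proof will proceed along the same lines as Theorem \ref{reduced-presentation-n4}. The goal is to show that the group $G$ presented by the generators and relations listed in the statement is isomorphic to $VL_n$. One direction is an obvious surjection $G \twoheadrightarrow VL_n$ sending generators to themselves. To construct the inverse, I would define, for each $i \geq 1$,
\[
  y_{i+1} = (\rho_i \rho_{i-1} \cdots \rho_1)(\rho_{i+1} \rho_i \cdots \rho_2)\, y_1\, (\rho_2 \cdots \rho_i \rho_{i+1})(\rho_1 \cdots \rho_{i-1} \rho_i) \quad \text{in } G,
\]
and check that the map $VL_n \to G$ sending $y_1 \mapsto y_1$, $\rho_i \mapsto \rho_i$, and $y_{i+1}$ to the word above respects all defining relations of $VL_n$. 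The formula for $y_{i+1}$ is the one dictated by the mixed relation $\rho_i y_{i+1} \rho_i = \rho_{i+1} y_i \rho_{i+1}$, iterated from $y_2 = \rho_1 \rho_2 y_1 \rho_2 \rho_1$, precisely as in Claim 1 of Theorem \ref{reduced-presentation-n4}.

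Once the inverse is well-defined, the composition $G \to VL_n \to G$ is clearly the identity on generators, so $G \cong VL_n$. To verify well-definedness, I must check that the defining relations of $VL_n$ not already present in the reduced set do hold in $G$ after substitution. The relations $y_{i+1}^2 = 1$ follow at once by induction from $y_{i+1} = \rho_i \rho_{i+1} y_i \rho_{i+1} \rho_i$ together with $\rho_j^2 = 1$. The mixed relations $\rho_i y_{i+1} \rho_i = \rho_{i+1} y_i \rho_{i+1}$ and the commuting relations $y_i \rho_j = \rho_j y_i$ for $|i - j| \geq 2$ can be derived by the exact same sequences of braid and commutativity moves in the $\rho$-subgroup (together with $\rho_j y_1 = y_1 \rho_j$ for $j \geq 3$) as in Claims 2 and 3 of Theorem \ref{reduced-presentation-n4}, simply replacing $s$ by $y$ throughout; no new ideas are required for these steps.

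The substantive step, and the main obstacle, is recovering the braid relation $y_i y_{i+1} y_i = y_{i+1} y_i y_{i+1}$ for $1 \leq i \leq n-2$, which, thanks to $y_i^2 = y_{i+1}^2 = 1$, is equivalent to $(y_i y_{i+1})^3 = 1$. For $i = 1$ the relation follows immediately from \eqref{nvln 26} after substituting $y_2 = \rho_1 \rho_2 y_1 \rho_2 \rho_1$, giving the base case. For $i \geq 2$, writing $V_i = (\rho_{i-1} \cdots \rho_1)(\rho_i \cdots \rho_2)$ so that $y_i = V_i y_1 V_i^{-1}$ and $V_{i+1} = \rho_i \rho_{i+1} V_i$, my plan is to expand $(y_i y_{i+1})^3$ fully and then repeatedly apply the braid and commuting relations \eqref{nvln 23} and \eqref{nvln 24} on the $\rho_j$'s together with \eqref{nvln 25} to migrate every occurrence of $\rho_j$ with $j \geq 3$ across the interior $y_1$'s; the aim is to reduce the expression to a $V_i$-conjugate of $(y_1 \rho_1 \rho_2 y_1 \rho_2 \rho_1)^3$, at which point \eqref{nvln 26} finishes the job. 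This is the analogue of the long chain of underlined manipulations performed in Claim 4 of Theorem \ref{reduced-presentation-n4}, and although the underlying structure of the rewrite is the same, the presence of the $y$-braid relation (rather than a $y$-commutation relation) means the bookkeeping is the only genuinely computational obstacle in the argument.
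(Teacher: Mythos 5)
Your proposal follows essentially the same route as the paper: eliminate $y_2,\dots,y_{n-1}$ via the conjugation formula forced by the mixed relation, recover the omitted relations by transplanting Claims 1--3 of Theorem \ref{reduced-presentation-n4} with $s$ replaced by $y$, and reduce the braid relation $y_iy_{i+1}y_i=y_{i+1}y_iy_{i+1}$ to the single relation \eqref{nvln 26} by pushing the $\rho$-conjugating words through with \eqref{nvln 23}--\eqref{nvln 25} (the paper carries out this last rewrite explicitly by computing both sides of the braid relation and matching them, invoking \eqref{nvln 26} exactly where you predict). The strategy, the key substitution, and the identification of \eqref{nvln 26} as the crucial input are all the same; only the explicit bookkeeping is left unexecuted in your sketch.
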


\begin{proof}
Replacing  $s_i$ by $y_i$ in Claim 1 in the proof of Theorem \ref{reduced-presentation-n4} gives
\begin{equation}\label{yi in terms of y1}
y_{i+1}=(\rho_i \rho_{i-1}\dots \rho_2\rho_1)(\rho_{i+1} \rho_{i}\dots \rho_3\rho_2)y_1(\rho_{2} \rho_{3}\dots \rho_i\rho_{i+1})(\rho_1 \rho_2\dots \rho_{i-1}\rho_i)
\end{equation}
 for $i \geq 2$. Hence, we can eliminate the generators $y_i$ for $i \geq 2$.
\par

Next, we show that the relations of $VL_n$ involving $y_i$ for $i \geq 2$ can be recovered from the relations listed in the statement of the theorem. 
Firstly, replacing  $s_i$ by $y_i$ in Claim 2 in the proof of Theorem \ref{reduced-presentation-n4}, we see that the relations $\rho_i y_{i+1}\rho_i=\rho_{i+1} y_{i}\rho_{i+1}$ for $1 \le i \le n-2$ can be recovered from the relations \eqref{nvln 22} and \eqref{nvln 23}. Similarly, the relation $y_i \rho_j = \rho_j y_i$ for $|i-j| \geq 2$ is a consequence of the relations \ref{nvln 23}, \ref{nvln 24} and \ref{nvln 25}. 
\par

Next, we claim that the relation $y_i y_{i+1} y_i = y_{i+1} y_i y_{i+1}$ is a consequence of \eqref{yi in terms of y1} and the relations \eqref{nvln 21} through \eqref{nvln 26}. We compute
\begin{Small}
\begin{eqnarray*}
&&  y_i y_{i+1} y_i\\
 &=& (\rho_{i-1}\ldots \rho_{1})(\rho_{i}\ldots\rho_{2})y_{1}(\rho_{2}\ldots\rho_{i})\dashuline{(\rho_1 \ldots \rho_{i-1})} \dashuline{(\rho_{i}\ldots \rho_{1})}(\rho_{i+1}\ldots\rho_{2})y_{1}\\
& & (\rho_{2}\ldots\rho_{i+1})\dashuline{(\rho_1 \ldots \rho_{i})}\dashuline{(\rho_{i-1}\ldots \rho_{1})}(\rho_{i}\ldots\rho_{2})y_{1}(\rho_{2}\ldots\rho_{i}) (\rho_1 \ldots \rho_{i-1}) ~\text{ (By \eqref{yi in terms of y1})}\\
 &=& \left(\rho_{i-1} \ldots \rho_1\right)\left(\rho_i \ldots \rho_2\right) y_1( \dashuline{\left.\rho_2 \ldots \rho_i\right)\left(\rho_i \ldots \rho_2\right.} \rho_1 \rho_2 \ldots \rho_i)\left(\rho_{i+1} \ldots \rho_2\right) y_1 \\
& & \left(\rho_2 \ldots \rho_{i+1}\right) (\rho_i \ldots \rho_2 \rho_1 \dashuline{\left.\rho_2 \ldots \rho_i ) (\rho_i \ldots \rho_2\right.} ) y_1\left(\rho_2 \ldots \rho_i\right)\left(\rho_1 \ldots \rho_{i-1}\right)  ~\text{ (By \eqref{nvln 24})}\\
 &=& \left(\rho_{i-1} \ldots \rho_1\right)\left(\rho_i \ldots \rho_2\right) y_1 (\rho_1 \dashuline{\left.\rho_2 \ldots \rho_i ) (\rho_{i+1} \rho_i \ldots \rho_2\right.}) y_1(\dashuline{\left.\rho_2 \ldots \rho_i \rho_{i+1}) (\rho_i \ldots \rho_2\right.} \rho_1) y_1 \\
& & \left(\rho_2 \ldots \rho_i\right)\left(\rho_1 \ldots \rho_{i-1}\right) ~\text{ (By \eqref{nvln 22})}\\
 &=& \left(\rho_{i-1} \ldots \rho_1\right)\left(\rho_i \ldots \rho_2\right) \dashuline{y_1 \rho_1} (\dashuline{\rho_{i+1} \ldots \rho_3} \rho_2 \dashuline{\rho_3 \ldots \rho_{i+1}}) \dashuline{y_1} (\rho_{i+1} \ldots \rho_3 \rho_2 \dashuline{\rho_3 \ldots \rho_{i+1}} ) \dashuline{\rho_1 y_1}\\
& & \left(\rho_2 \ldots \rho_i\right)\left(\rho_1 \ldots \rho_{i-1}\right)  ~\text{ (By \eqref{nvln 24})}\\
 &=& \left(\rho_{i-1} \ldots \rho_1\right)\left(\rho_i \ldots \rho_2\right)\left(\rho_{i+1} \ldots \rho_3\right) y_1 \rho_1 \rho_2 y_1\dashuline{\left(\rho_3 \ldots \rho_{i+1}\right)\left(\rho_{i+1} \ldots \rho_3\right)} \rho_2 \rho_1 y_1\left(\rho_3 \ldots \rho_{i+1}\right) \\
& & \left(\rho_2 \ldots \rho_i\right)\left(\rho_1 \ldots \rho_{i-1}\right)  ~\text{ (By \eqref{nvln 23} and \eqref{nvln 25})}\\
 &=& \left(\rho_{i-1} \ldots \rho_1\right)\left(\rho_i \ldots \rho_2\right)\left(\rho_{i+1} \ldots \rho_3\right) \dashuline{y_1 \rho_1 \rho_2 y_1 \rho_2 \rho_1} y_1\left(\rho_3 \ldots \rho_{i+1}\right)\left(\rho_2 \ldots \rho_i\right)\left(\rho_1 \ldots \rho_{i-1}\right)~\text{ (By \eqref{nvln 22})}\\
 &=& \left(\rho_{i-1} \ldots \rho_1\right)\left(\rho_i \ldots \rho_2\right)(\dashuline{\rho_{i+1} \ldots \rho_3} ) \dashuline{\rho_1} \rho_2 y_1 \rho_2 \rho_1 y_1 \rho_1 \rho_2 y_1 \rho_2 \dashuline{\rho_1}(\dashuline{\rho_3 \ldots \rho_{i+1}})\left(\rho_2 \ldots \rho_i\right) \left(\rho_1 \ldots \rho_{i-1}\right)\\
& & \text{(By \eqref{nvln 26})}\\
&=&  (\rho_{i-1} \ldots \rho_1)(\rho_i \ldots \rho_1)(\rho_{i+1} \ldots \rho_2) y_1 \rho_2 \rho_1 y_1 \rho_1 \rho_2 y_1 (\rho_2 \ldots \rho_{i+1}) (\rho_1 \ldots \rho_i)(\rho_1 \ldots \rho_{i-1})~\text{(By \eqref{nvln 23}})
\end{eqnarray*}
\end{Small}
and
\begin{Small}
\begin{eqnarray*}
&& y_{i+1} y_{i} y_{i+1}\\ 
 &=&(\rho_{i}\ldots \rho_{1})(\rho_{i+1}\ldots\rho_{2})y_{1}(\rho_{2}\ldots\rho_{i+1})\dashuline{(\rho_1 \ldots \rho_{i})} \dashuline{(\rho_{i-1}\ldots \rho_{1})}(\rho_{i}\ldots\rho_{2})y_{1}\\
& & (\rho_{2}\ldots\rho_{i})\dashuline{(\rho_1 \ldots \rho_{i-1})}\dashuline{(\rho_{i}\ldots \rho_{1})}(\rho_{i+1}\ldots\rho_{2})y_{1}(\rho_{2}\ldots\rho_{i+1})(\rho_1 \ldots \rho_{i})  ~\text{ (By \eqref{yi in terms of y1})}\\
&=&\left(\rho_{i} \ldots \rho_1\right)\left(\rho_{i+1} \ldots \rho_2\right) y_1\left(\rho_2 \ldots \rho_{i+1}\right)\left(\rho_i \ldots \rho_2 \rho_1\right.\dashuline{\left. \rho_2 \ldots \rho_i\right)\left(\rho_{i} \ldots \rho_2\right)} y_1 \\
& & \dashuline{\left(\rho_2 \ldots \rho_{i}\right)\left(\rho_i \ldots \rho_2\right.}\left. \rho_1 \rho_2 \ldots \rho_i\right)\left(\rho_{i+1} \ldots \rho_2\right) y_1\left(\rho_2 \ldots \rho_{i+1}\right)\left(\rho_1 \ldots \rho_{i}\right)  ~\text{ (By \eqref{nvln 24})}\\
 &=& \left(\rho_{i} \ldots \rho_1\right)\left(\rho_{i+1} \ldots \rho_2\right) y_1\dashuline{\left(\rho_2 \ldots \rho_{i+1}\right)\left(\rho_i \ldots \rho_2\right.}\left. \rho_1\right) y_1 \left(\rho_1 \right.\dashuline{\left.\rho_2 \ldots \rho_i\right)\left(\rho_{i+1} \ldots \rho_2\right)} \\
& & y_1\left(\rho_2 \ldots \rho_{i+1}\right)\left(\rho_1 \ldots \rho_{i}\right)  ~\text{ (By \eqref{nvln 22})}\\
&=& \left(\rho_{i} \ldots \rho_1\right)\left(\rho_{i+1} \ldots \rho_2\right) \dashuline{y_1} (\dashuline{\rho_{i+1} \ldots \rho_3} \rho_2 \dashuline{\rho_3 \ldots \rho_{i+1}})\dashuline{\rho_1 y_1\rho_1}(\rho_{i+1} \ldots \rho_3 \rho_2 \dashuline{\rho_3 \ldots \rho_{i+1}})\dashuline{y_1} \\
& & \left(\rho_2 \ldots \rho_{i+1}\right)\left(\rho_1 \ldots \rho_{i}\right) ~\text{ (By \eqref{nvln 24})}\\
 &=& \left(\rho_{i} \ldots \rho_1\right)\left(\rho_{i+1} \ldots \rho_2\right) \left(\rho_{i+1} \ldots \rho_3) \right.\dashuline{y_1 \rho_2 \rho_1 y_1\rho_1\rho_2 y_1}\left. (\rho_3 \ldots \rho_{i+1}\right) \left(\rho_2 \ldots \rho_{i+1}\right)\left(\rho_1 \ldots \rho_{i}\right) \\
& & \text{(By \eqref{nvln 22}, \eqref{nvln 23} and \eqref{nvln 25})}.
\end{eqnarray*}
\end{Small}
But, \eqref{nvln 23} and  \eqref{nvln 24} give
$$\left(\rho_i \ldots \rho_1\right)\left(\rho_{i+1} \rho_i \rho_{i-1} \ldots \rho_2\right)(\rho_{i+1}\rho_i \ldots \rho_3 )=
\left(\rho_{i-1} \ldots \rho_1\right)\left(\rho_i \ldots \rho_1\right)\left(\rho_{i+1} \ldots \rho_2\right),$$
and the claim holds, which completes the proof.
\end{proof}
\medskip

Since the abelianisation of $VL_n$ is isomorphic to the elementary abelian $2$-group of order $4$,  we can take $$\M = \{ 1, y_1, \rho_1, y_1\rho_1 \}$$ as a Schreier system of coset representatives. We use the presentation of $VL_n$ given by Theorem \ref{reduced-presentation-n4-Ln} and set $S=\{y_1, \rho_1, \rho_2,\dots, \rho_{n-1}\}$.

\subsection*{Generators of $VL_n^{'}$:}
We first compute the generating set $\{\gamma(\mu, a) \mid \mu\in \M, ~a\in S \}$ explicitly. Since the generating set and the Schreier system is similar to the case of $VT_n$, ignoring the trivial generators, we see that $VL_n^{'}$ is generated by the set
$$\big\{ \alpha_i, \beta_i, \delta ~\mid ~ i=2, 3, \dots, n-1 \big\},$$
where
\begin{eqnarray*}
\alpha_i &:=& \rho_i \rho_1 = \gamma(1, \rho_i) = \gamma(\rho_1, \rho_i)^{-1},\\
 \beta_i &:=& y_1\rho_i \rho_1 y_1 = \gamma(y_1, \rho_i) =\gamma(y_1\rho_1, \rho_i)^{-1},\\
 \delta &:=& (\rho_1 y_1)^2 =  \gamma(\rho_1, y_1)=\gamma(y_1\rho_1, y_1)^{-1}.
\end{eqnarray*}

\subsection*{Relations in  $VL_n^{'}$:}
We now determine the relations in $VL_n^{'}$ corresponding to each relation in the presentation of $VL_n$ given by Theorem \ref{reduced-presentation-n4-Ln}. Since the computations are similar to the ones in the proof of Theorem \ref{commutator-subgroup-vtn}, we elaborate only those which yield non-trivial relations in $VL_n^{'}$.

\begin{enumerate}
\item The relation $y_1^2=1$ gives only trivial relations in $VL_n^{'}$.
\item Similarly, the relations $\rho_i^2=1$ for $1 \le i \le n-1$ give only trivial relations in $VL_n^{'}$.
\item Next, we consider the relations $(\rho_i \rho_j)^2=1$ for $|i-j| \geq 2$. In this case, we get
\begin{eqnarray*}
\tau(1(\rho_i\rho_j)^21) &=& (\gamma(1, \rho_i)\gamma(\rho_1, \rho_j))^2\\
&=&\begin{cases}
\alpha_j^{-2}~ &  \quad \textrm{for} \quad  j \geq 3, \\
(\alpha_i \alpha_{j}^{-1})^2 & \quad \textrm{for} \quad  i \geq 2  \quad \text{and}  \quad i+2 \leq j \leq n-1,
\end{cases}\\
& &\\
\tau(y_1(\rho_i\rho_j)^2 y_1) &=&(\gamma(y_1, \rho_i)\gamma(y_1 \rho_1, \rho_j))^2\\
&=& \begin{cases}
\beta_j^{-2}& \quad \textrm{for} \quad  3 \leq j \leq n-1,\\
(\beta_i \beta_{j}^{-1})^2 & \quad \textrm{for}   \quad  2 \leq i \leq n-2  \quad  \text{and} \quad  j \geq i+2,
\end{cases}\\
& &\\
\tau(\rho_1 (\rho_i\rho_j)^2 \rho_1) &=&(\gamma(\rho_1, \rho_i)\gamma(1, \rho_j))^2\\
&=&\begin{cases}
\alpha_j^{2} & \quad \textrm{for} \quad  3 \leq j \leq n-1, \\
(\alpha_i^{-1} \alpha_{j})^2 &  \quad \textrm{for} \quad  i \geq 2  \quad \text{and} \quad  i+2 \leq j \leq n-1,
\end{cases}\\
& &\\
\tau(y_1 \rho_1(\rho_i\rho_j)^2 \rho_1 y_1)&=& (\gamma(y_1 \rho_1, \rho_i)\gamma(y_1, \rho_j))^2\\
&=&\begin{cases}
\beta_j^{2} &  \quad \textrm{for} \quad  j \geq 3, \\
(\beta_i^{-1} \beta_{j})^2 &  \quad \textrm{for} \quad  i \geq 2  \quad \text{and} \quad  i+2 \leq j \leq n-1.
\end{cases}
\end{eqnarray*}

Thus, we have the following non-trivial relations in $VL_n^{'}$
\begin{eqnarray}
\label{vln comm rel 5} \alpha_j^2&= 1&   \quad \textrm{for} \quad 3\leq j\leq n-1,\\
\label{vln comm rel 6} \beta_j^{2}&=1&    \quad \textrm{for} \quad  3\leq j\leq n-1,\\
\label{vln comm rel 7} (\alpha_i \alpha_{j}^{-1})^2&= 1&   \quad \textrm{for} \quad  2 \leq i \leq n-2   \quad \text{and} \quad  j \geq i+2,\\
\label{vln comm rel 8} (\beta_i \beta_{j}^{-1})^2&=1&   \quad \textrm{for} \quad  2 \leq i \leq n-2   \quad \text{and} \quad  j \geq i+2.
\end{eqnarray}
\item[]

\item We now consider the relations $(\rho_i \rho_{i+1})^3=1$ for $1 \le i \le n-2$. Then we obtain
\begin{eqnarray*}
\tau(1(\rho_i\rho_{i+1})^3 1) &=& \big ( \gamma(1, \rho_i)\gamma(\rho_1, \rho_{i+1})\big ) ^3\\
&=&\begin{cases}
\alpha_2^{-3} & \quad \textrm{for} \quad i=1,\\
(\alpha_i\alpha_{i+1}^{-1})^3 & \quad \textrm{for} \quad 2\leq i\leq n-2,
\end{cases}\\
&&\\
\tau(y_1(\rho_i\rho_{i+1})^3y_1)&=& \big ( \gamma(y_1, \rho_i)\gamma(y_1\rho_1, \rho_{i+1})\big ) ^3\\
&=&\begin{cases}
\beta_2^{-3} & \quad \textrm{for}  \quad i=1,\\
(\beta_i \beta_{i+1}^{-1})^3& \quad \textrm{for} \quad  2 \leq i \leq n-2,
\end{cases}\\
&&\\
\tau(\rho_1(\rho_i\rho_{i+1})^3 \rho_1)&=& \big ( \gamma(\rho_1, \rho_i)\gamma(1, \rho_{i+1})\big ) ^3\\
&=&\begin{cases}
\alpha_2^{3}& \quad \textrm{for} \quad i=1,\\
(\alpha_i^{-1}\alpha_{i+1})^3 & \quad \textrm{for} \quad  2 \leq i \leq n-2,
\end{cases}\\
&&\\
\tau(y_1 \rho_1(\rho_i\rho_{i+1})^3 \rho_1 y_1) &=& \big ( \gamma(y_1 \rho_1, \rho_i)\gamma(y_1, \rho_{i+1})\big ) ^3\\
&=&\begin{cases}
\beta_2^{3}& \quad \textrm{for} \quad i=1,\\
(\beta_i^{-1}\beta_{i+1})^3 & \quad \textrm{for} \quad 2\leq i\leq n-2.
\end{cases}
\end{eqnarray*}

Thus, we have the following non-trivial relations in $VL_n^{'}$ 
\begin{eqnarray}
\label{vln comm rel 1} \alpha_2^3&=1,&\\
\label{vln comm rel 2} \beta_2^{3}&=1,&\\
\label{vln comm rel 3} (\alpha_i \alpha_{i+1}^{-1})^3&=1&  \quad \textrm{for} \quad 2\leq i\leq n-2,\\
\label{vln comm rel 4} (\beta_i \beta_{i+1}^{-1})^3&=1& \quad \textrm{for} \quad 2\leq i\leq n-2.
\end{eqnarray}
\item[]

\item The relations  $(\rho_i y_1)^2=1$ for $ 3 \leq i \leq n-1$ give
\begin{eqnarray*} 
\tau(1(\rho_i y_1)^2 1) &=& \alpha_i \delta \beta_i^{-1},\\
&&\\
\tau(y_1(\rho_i y_1)^2 y_1) &=& \beta_i \delta^{-1} \alpha_i^{-1},\\
&&\\
\tau(\rho_1 (\rho_i y_1)^2 \rho_1) &=& \alpha_i^{-1}\beta_i \delta^{-1},\\
&&\\
\tau( y_1 \rho_1 (\rho_i y_1)^2 \rho_1 y_1) &=& \beta_i^{-1} \alpha_i \delta
\end{eqnarray*}
for each $3 \le i \le n-1$. Thus, the non-trivial relations in $VL_n^{'}$ are
\begin{eqnarray}
\label{vln comm rel 9} \beta_i &=& \alpha_i \delta  \quad \textrm{for} \quad   3 \leq i \leq n-1.
\end{eqnarray}
\item[]

\item Finally, we consider the relation $(y_1\rho_1 \rho_2 y_1 \rho_2 \rho_1)^3 = 1$. In this case, we get
\begin{eqnarray*}
 \tau(1(y_1\rho_1 \rho_2 y_1 \rho_2 \rho_1)^3 1)&=& (\gamma(y_1 \rho_1, \rho_2) \gamma(1, \rho_2) )^3 = (\beta_2^{-1} \alpha_2)^3,\\
 & &\\
\tau(y_1(y_1\rho_1 \rho_2 y_1 \rho_2 \rho_1)^3 y_1)&=& (\gamma(\rho_1, \rho_2) \gamma(y_1, \rho_2))^3= (\alpha_2^{-1} \beta_2)^3,\\
 & &\\
\tau(\rho_1(y_1\rho_1 \rho_2 y_1 \rho_2 \rho_1)^3 \rho_1) &=& (\gamma(\rho_1, y_1) \gamma(y_1, \rho_2)\gamma(y_1 \rho_1, y_1) \gamma(\rho_1, \rho_2) )^3= (\delta \beta_2 \delta^{-1} \alpha_2^{-1})^3,\\
 & &\\
\tau(y_1\rho_1(y_1\rho_1 \rho_2 y_1 \rho_2 \rho_1)^3 \rho_1 y_1)& = & (\gamma(y_1\rho_1, y_1) \gamma(1, \rho_2)\gamma(\rho_1, y_1) \gamma(y_1\rho_1, \rho_2) )^3 = ( \delta^{-1} \alpha_2 \delta \beta_2^{-1})^3.
\end{eqnarray*}

Thus, we have the following additional non-trivial relations in $VL_n^{'}$ 
\begin{eqnarray}
\label{vln comm rel 10}  (\alpha_2^{-1} \beta_2)^3 &= &1,\\
\label{vln comm rel 11} (\delta \beta_2 \delta^{-1} \alpha_2^{-1})^3 &= &1.
\end{eqnarray}
\end{enumerate}

We eliminate the generators $\beta_i$ for $3 \leq i \leq n-1$ using the relations \eqref{vln comm rel 9} and set $\beta := \beta_2$. This together with the relations \eqref{vln comm rel 1} through \eqref{vln comm rel 11} gives the following result.

\begin{theorem}\label{commutator-subgroup-vln}
The following hold for the commutator subgroup $VL_n^{'}$ of $VL_n$:
\begin{enumerate}
\item $VL_2^{'} \cong \mathbb{Z}$ and is generated by $(\rho_1 y_1)^2$.

\item $VL_3^{'} \cong \mathbb{Z}_3 * \mathbb{Z}_3 * \mathbb{Z}$ and has a presentation
$$ \big\langle \rho_2\rho_1,  y_1\rho_2\rho_1y_1, (\rho_1 y_1)^2 ~\mid~ (\rho_2\rho_1)^3 = (y_1\rho_2\rho_1y_1)^3=1 \big\rangle.$$

\item For  $n \geq 4$, $VL_n^{'}$ has a presentation with generating set
$$\big\{ \alpha_i, \beta, \delta~\mid~ i=2, 3, \dots, n-1 \big\}$$
and defining relations as follows:
\begin{eqnarray*}
\alpha_2^3&=&1,\\
\alpha_j^2&=&1  \quad \textrm{for} \quad 3\leq j\leq n-1,\\
\beta^{3}&=&1,\\
(\alpha_i\alpha_{i+1}^{-1})^3&=&1  \quad \textrm{for} \quad 2\leq i\leq n-2,\\
(\alpha_i\alpha_{j}^{-1})^2&=&1  \quad \textrm{for} \quad  2 \leq i \leq n-2   \quad \text{and}  \quad  j \geq i+2,\\
(\alpha_j\delta)^{2}&=&1   \quad \textrm{for} \quad 3\leq j\leq n-1,\\
(\beta \delta^{-1}\alpha_3^{-1})^3&=&1,\\
(\beta \delta^{-1}\alpha_j^{-1})^2&=&1  \quad \textrm{for} \quad  4 \leq j\leq n-1,\\
(\alpha_2^{-1} \beta)^3&=&1,\\
(\delta \beta \delta^{-1} \alpha_2^{-1})^3&=&1.
\end{eqnarray*}
\end{enumerate}
\end{theorem}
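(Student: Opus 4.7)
The plan is to apply the Reidemeister-Schreier rewriting method to the reduced presentation of $VL_n$ from Theorem \ref{reduced-presentation-n4-Ln}, using the Schreier transversal $\M = \{1, y_1, \rho_1, y_1\rho_1\}$ for $VL_n^{'}$ in $VL_n$. This transversal is the natural choice since $VL_n / VL_n^{'} \cong (\mathbb{Z}/2)^2$, with one factor generated by the image of $y_1$ and the other by the common image of all $\rho_i$.

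First I would compute the Reidemeister-Schreier generators $\gamma(\mu,a)$ with $\mu \in \M$ and $a$ running over the generating set $\{y_1, \rho_1, \rho_2, \ldots, \rho_{n-1}\}$ of $VL_n$. Up to inverses coming from other coset representatives, the non-trivial such generators are exactly
$$\alpha_i = \rho_i\rho_1 \quad (2 \leq i \leq n-1), \qquad \beta_i = y_1\rho_i\rho_1 y_1 \quad (2 \leq i \leq n-1), \qquad \delta = (\rho_1 y_1)^2.$$
Next, for each of the six defining relations \eqref{nvln 21}-\eqref{nvln 26} and each $\mu \in \M$, I would rewrite $\tau(\mu r \mu^{-1})$ in these generators. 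The relations \eqref{nvln 21} and \eqref{nvln 22} contribute only trivial identities, while \eqref{nvln 23}-\eqref{nvln 26} produce precisely the families \eqref{vln comm rel 1}-\eqref{vln comm rel 11} listed in the paragraphs preceding the statement.

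To obtain the final presentation I would then perform Tietze transformations: use \eqref{vln comm rel 9} to eliminate $\beta_i$ for $3 \leq i \leq n-1$, set $\beta := \beta_2$, and observe that \eqref{vln comm rel 4} and \eqref{vln comm rel 8} for $i \geq 3$ collapse onto \eqref{vln comm rel 3} and \eqref{vln comm rel 7} respectively, since $\beta_i\beta_j^{-1} = \alpha_i\delta\delta^{-1}\alpha_j^{-1} = \alpha_i\alpha_j^{-1}$. The small cases are dispatched directly: for $n=2$ the group $VL_2 \cong D_\infty$ has classical infinite cyclic commutator subgroup generated by $\delta$; and for $n=3$ only the generators $\alpha_2, \beta, \delta$ survive, with most of \eqref{vln comm rel 5}-\eqref{vln comm rel 8} vacuous, leaving the relations that yield the claimed free product presentation.

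The principal technical burden is the careful bookkeeping of coset representatives along the long relators, especially \eqref{nvln 26}, which has length $18$ and must be processed one letter at a time while tracking the $\overline{\mu a}$ at each step. In spirit the calculation parallels that of Theorem \ref{commutator-subgroup-vtn} for $VT_n^{'}$, and the algebraic manipulations needed to identify each rewritten word with one of \eqref{vln comm rel 1}-\eqref{vln comm rel 11} are routine once the Schreier transversal and the closed-form expressions for the $\gamma$'s are in hand.
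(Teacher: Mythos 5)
Your proposal follows the paper's proof essentially verbatim: the same Schreier transversal $\{1, y_1, \rho_1, y_1\rho_1\}$, the same generators $\alpha_i$, $\beta_i$, $\delta$, the same rewriting of the six relation families of Theorem \ref{reduced-presentation-n4-Ln}, and the same Tietze elimination $\beta_i = \alpha_i\delta$ for $i \geq 3$ with $\beta := \beta_2$. The only caveat is that for $n=3$ the rewriting of \eqref{nvln 26} still yields $(\alpha_2^{-1}\beta)^3=1$ and $(\delta\beta\delta^{-1}\alpha_2^{-1})^3=1$, which you (exactly like the paper) do not explain away before asserting the free-product presentation of $VL_3^{'}$.
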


\begin{proposition}\label{stable-lower-central-series-Ln}
 $L_n^{'}=\gamma_3(L_n)$ and $VL_n^{'} = \gamma_3 (VL_n)$ for $n \geq 3$.
\end{proposition}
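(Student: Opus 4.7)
The plan is to follow the blueprint of Proposition~\ref{stable-lower-central-series-tn}: in each case, it suffices to show that the quotient $G/\gamma_3(G)$ is abelian, since the reverse inclusion $\gamma_3(G)\subseteq G^{'}$ holds automatically.

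For $L_n$, I would first verify, directly from $y_i^2=1$ and the braid relation $y_iy_{i+1}y_i=y_{i+1}y_iy_{i+1}$, the identity
\[
y_{i+1} \;=\; y_i\,[y_i,[y_i,y_{i+1}]]
\qquad (1\le i\le n-2),
\]
by exactly the same short manipulation that yields the analogous identity for the $\rho_i$'s implicit in Proposition~\ref{stable-lower-central-series-tn}. Since the triple commutator lies in $\gamma_3(L_n)$, reducing modulo $\gamma_3(L_n)$ gives $\tilde y_{i+1}=\tilde y_i$ for each $i$. Hence $L_n/\gamma_3(L_n)$ is generated by the single element $\tilde y_1$ and is therefore cyclic, in particular abelian.

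For $VL_n$, I would combine the identity above with its $\rho$-companion $\rho_{i+1}=\rho_i[\rho_i,[\rho_i,\rho_{i+1}]]$ to collapse both sequences $\{\tilde y_i\}$ and $\{\tilde\rho_i\}$ to the single elements $\tilde y_1$ and $\tilde\rho_1$ in $VL_n/\gamma_3(VL_n)$. The quotient is then generated by these two elements, and to force commutativity, I would invoke the defining far-commutation relation $y_1\rho_3=\rho_3 y_1$ of $VL_n$ and rewrite
\[
\tilde y_1\tilde\rho_1 \;=\; \tilde y_1\tilde\rho_3 \;=\; \tilde\rho_3\tilde y_1 \;=\; \tilde\rho_1\tilde y_1,
\]
exactly as in the $VT_n$ argument.

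The only real computation is the verification of the triple-commutator identities, which is a routine application of the braid and involution relations and should not be a genuine obstacle. The delicate point is that the final commuting step for $VL_n$ uses the index $3$ and hence requires $n\ge 4$ (implicitly as in Proposition~\ref{stable-lower-central-series-tn}); the remaining case $n=3$ of the $L_n$ assertion is already handled by the argument above, since $L_3\cong S_3$ satisfies $L_3^{'}=A_3=[A_3,S_3]=\gamma_3(L_3)$.
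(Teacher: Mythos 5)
Your strategy is the same as the paper's: reduce to showing that the class-two quotient is abelian, collapse the generators modulo $\gamma_3$ via triple-commutator identities, and invoke the far-commutation relation $y_1\rho_3=\rho_3 y_1$ at the end. For $L_n$ your argument is if anything cleaner than the paper's, which runs a central-extension computation with correction terms $\alpha_i$ rather than quoting the identity $y_{i+1}=y_i[y_i,[y_i,y_{i+1}]]$ directly; that identity is correct (it follows from $y_i^2=1$ and the braid relation exactly as you say), and since it only uses consecutive indices it already covers $L_3$, so your separate appeal to $L_3\cong S_3$ is harmless but unnecessary. For $VL_n$ with $n\ge 4$ your argument coincides with the paper's in all essentials (the paper writes $y_{i+1}=y_i[[\rho_{i+1},[\rho_{i+1},\rho_i]],y_i]^{-1}$ where you use the $y$-braid identity; either collapses $\tilde y_{i+1}$ to $\tilde y_i$).

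The gap is the case $VL_3$: you correctly observe that the final commuting step needs the generator $\rho_3$ and hence $n\ge 4$, but you then dispose of $n=3$ only for the $L_n$ assertion, leaving the claim $VL_3'=\gamma_3(VL_3)$ unproved. This case cannot be repaired, because the claim fails there. The assignment $y_1,y_2\mapsto a$, $\rho_1,\rho_2\mapsto b$ respects every defining relation of $VL_3$ (for $n=3$ there are no far-commutation relations, and all remaining relations become identities in $\langle a,b\mid a^2=b^2=1\rangle$), so it defines an epimorphism from $VL_3$ onto the infinite dihedral group $\mathbb{Z}_2*\mathbb{Z}_2$. An epimorphism maps $\gamma_k$ onto $\gamma_k$, and in $\mathbb{Z}_2*\mathbb{Z}_2$ one has $\gamma_2=\langle (ab)^2\rangle$ and $\gamma_3=\langle (ab)^4\rangle$, so $\gamma_2/\gamma_3\cong\mathbb{Z}_2\ne 1$; hence $VL_3'\ne\gamma_3(VL_3)$. (The identical observation, with $s_i$ in place of $y_i$, shows that $VT_3'\ne\gamma_3(VT_3)$, so the paper's Proposition~\ref{stable-lower-central-series-tn} is also only valid for $n\ge 4$.) Your instinct that the last step genuinely requires $n\ge 4$ is therefore right, and the honest fix is to restrict the virtual part of the statement to $n\ge 4$ rather than to look for another argument at $n=3$.
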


\begin{proof}
Consider the short exact sequence
$$
1 \longrightarrow L_n^{'} / \gamma_3\left(L_n\right) \longrightarrow L_n / \gamma_3\left(L_n\right) \longrightarrow \mathbb Z_2 \longrightarrow 1.
$$
Since coset of each $y_i$ maps to the generator of $\mathbb Z_2$,  it follows that there exist $\alpha_i \in L_n^{'}$ such that $y_i=\alpha_i y_1 \mod \gamma_3\left(L_n\right)$ for each $1 \leq i \leq n-1$, where $\alpha_1=1$.  Using the braid relation in $L_n / \gamma_3\left(L_n\right)$, we obtain
\begin{equation}\label{Lnlowercentralseries1}
\alpha_{i}y_1\alpha_{i+1}y_1\alpha_{i}y_1=\alpha_{i+1}y_1\alpha_{i}y_1\alpha_{i+1}y_1  \mod \gamma_3\left(L_n\right)
\end{equation}
for all $1 \leq i \leq n-1$. Since $\alpha_1=1$ and $L_n^{'} / \gamma_3\left(L_n\right)\subseteq \Z(L_n / \gamma_3\left(L_n\right))$,  equation \eqref{Lnlowercentralseries1} gives
$$\alpha_{i}=1 \mod \gamma_3\left(L_n\right)$$
for all $1 \leq i \leq n-1$.  This implies that $L_n / \gamma_3\left(L_n\right)$ is cyclic of order 2, and hence $L_n^{'} / \gamma_3\left(L_n\right)=1$, which is desired. The proof of $VL_n^{'} = \gamma_3 (VL_n)$ is similar to that of Proposition \ref{stable-lower-central-series-tn} with $s_i$ replaced by $y_i$.
\end{proof}

\begin{corollary}\label{vln residually nilpotent}
$L_n$ and $VL_n$ are residually nilpotent if and only if $n =2$. 
\end{corollary}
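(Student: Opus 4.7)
The plan is to split into the two cases $n \geq 3$ and $n = 2$.

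For $n \geq 3$, I would bootstrap off Proposition~\ref{stable-lower-central-series-Ln}, which gives $G' = \gamma_3(G)$ for $G \in \{L_n, VL_n\}$. A straightforward induction via $\gamma_{k+1}(G) = [\gamma_k(G), G]$ then forces $\gamma_k(G) = G'$ for every $k \geq 2$, so that
\[
\bigcap_{k \geq 1} \gamma_k(G) \; = \; G'.
\]
Since both $L_n$ and $VL_n$ surject onto $S_n$, which is non-abelian for $n \geq 3$, the commutator subgroup $G'$ is non-trivial (and indeed visibly non-trivial from the explicit presentation in Theorem~\ref{commutator-subgroup-vln}). Hence $\bigcap_k \gamma_k(G) \ne 1$, and neither $L_n$ nor $VL_n$ is residually nilpotent for $n \geq 3$.

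For $n = 2$ the defining presentations collapse dramatically: $L_2 = \langle y_1 \mid y_1^2 \rangle \cong \mathbb{Z}_2$ and $VL_2 = \langle y_1, \rho_1 \mid y_1^2, \rho_1^2 \rangle \cong \mathbb{Z}_2 * \mathbb{Z}_2$, the infinite dihedral group $D_\infty$. The former is abelian, hence trivially residually nilpotent. For the latter, set $a = y_1 \rho_1$ and $t = \rho_1$, so that $tat^{-1} = a^{-1}$ and $[a,t] = a^{-2}$. A direct induction yields $\gamma_k(D_\infty) = \langle a^{2^{k-1}} \rangle$ for $k \geq 2$, and since $a$ has infinite order the intersection $\bigcap_k \langle a^{2^{k-1}} \rangle$ is trivial. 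Thus $VL_2$ is residually nilpotent (in fact residually a finite $2$-group), completing the corollary.

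The only genuinely new computation is the lower central series of $D_\infty$ in the $n=2$ case; the rest is a clean application of Proposition~\ref{stable-lower-central-series-Ln} together with non-abelianness of $S_n$ for $n \geq 3$, so I do not anticipate any real obstacle.
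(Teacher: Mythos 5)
Your proposal is correct and is essentially the argument the paper intends (the corollary is stated without proof as an immediate consequence of Proposition~\ref{stable-lower-central-series-Ln}): for $n\ge 3$ the identity $\gamma_2=\gamma_3$ forces the lower central series to stabilize at the nontrivial subgroup $G'$, while for $n=2$ one has $L_2\cong\mathbb{Z}_2$ and $VL_2\cong\mathbb{Z}_2*\mathbb{Z}_2\cong D_\infty$, whose lower central series $\gamma_k=\langle (y_1\rho_1)^{2^{k-1}}\rangle$ intersects trivially. All the individual steps you give check out, including the identification of $\gamma_2(VL_2)$ with the infinite cyclic group generated by $(\rho_1 y_1)^2$ as in Theorem~\ref{commutator-subgroup-vln}(1).
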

\medskip

\section{Presentation of the pure virtual triplet group}\label{sec presentation pure virtual triplet group}
Recall that, for $n\geq 2$, the virtual triplet group $VL_n$ has a presentation with generating set $\{ y_1, y_2, \ldots, y_{n-1}, \rho_1, \rho_2, \ldots, \rho_{n-1}\}$ and defining relations as follows:
\begin{eqnarray}
y_i^{2} &=&1 \quad \textrm{for} \quad 1 \le i \le n-1, \label{L1}\\ 
y_iy_{i+1}y_{i} &=& y_{i+1}y_{i}y_{i+1} \quad\textrm{for}\quad 1 \le i \le n-2, \label{L2}\\ 
\rho_i^{2} &=& 1 \quad \textrm{for}\quad 1 \le i \le n-1, \label{vln L3}\\
\rho_i\rho_j &=& \rho_j\rho_i \quad \textrm{for} \quad |i - j| \geq 2, \label{vln  L4}\\
\rho_i\rho_{i+1}\rho_i &=& \rho_{i+1}\rho_i\rho_{i+1} \quad \textrm{for}\quad 1 \le i \le n-2, \label{vln L5}\\
\rho_i y_j &=& y_j\rho_i \quad \textrm{for} \quad |i - j| \geq 2, \label{L6}\\
\rho_i\rho_{i+1} y_i &=& y_{i+1} \rho_i \rho_{i+1} \quad \textrm{for} \quad 1 \le i \le n-2. \label{L7}
\end{eqnarray}
Let $\pi: V L_n \longrightarrow S_n$ denote the natural epimorphism given by $\pi\left(y_i\right)=\pi\left(\rho_i\right)=\tau_i$ for $1 \le i \le n-1$, where $\tau_i$ denotes the transposition $(i, i+1)$ in $S_n$. Then, we have $PV L_n=\ker(\pi)$. Further, the monomorphism $S_n \to V L_n$ given by $\tau_i \mapsto \rho_i$ gives a splitting of the exact sequence

\begin{equation*}\label{SESVLn}
1 \longrightarrow PV L_n \longrightarrow V L_n \stackrel{\pi}{\longrightarrow} S_n \longrightarrow 1.
\end{equation*}

We take the set 
 $$\mathrm{M}_n=\left\{m_{1, i_1} m_{2, i_2} \ldots m_{n-1, i_{n-1}} ~\mid~ m_{k, i_k}=\rho_k \rho_{k-1} \ldots \rho_{i_k+1}\right.\text{ for }1 \leq k \leq n-1 \text{ and } \left.0 \leq i_k<k\right\}$$
 as a Schreier system of coset representatives of $PVL_n$ in $VL_n$. Further, we set $m_{k, k}=1$ for $1 \leq k \leq n-1$. For an element $w \in V L_n$, let $\overline{w}$ denote the unique coset representative of the coset of $w$ in the Schreier set $\mathrm{M}_n$. Then $PVL _n$ is generated by $$\left\{\gamma(\mu, a)~\mid~ \mu \in \mathrm{M}_n~\text{ and } ~a \in\left\{y_1, \ldots, y_{n-1}, \rho_1, \ldots, \rho_{n-1}\right\}\right\}$$
and has defining relations
$$\{\tau\left(\mu r \mu^{-1}\right) ~\mid~ \mu \in \mathrm{M}_n~ \text{and} ~r ~\text{is a defining relation in} ~ VL_n\}.$$
We set
$$
\kappa_{i, i+1} :=y_i \rho_i
$$
for each $1 \leq i \leq n-1$ and
$$
\kappa_{i, j} :=\rho_{j-1} \rho_{j-2} \ldots \rho_{i+1} \kappa_{i, i+1} \rho_{i+1} \ldots \rho_{j-2} \rho_{j-1}
$$
for each $1 \leq i<j \leq n$ and $j \neq i+1$. Let  $\mathcal{S}=\left\{\kappa_{i, j} \mid 1 \leq i<j \leq n\right\}$ and $\mathcal{S} \sqcup \mathcal{S}^{-1}=\left\{\kappa_{i, j}^{\pm 1} \mid \kappa_{i, j} \in \mathcal{S}\right\}$. For ease of notation, we set $\kappa_{j,i}:=\kappa_{i,j}^{-1}$ whenever $i<j$.

\begin{lemma}\label{conj action lemma}
The conjugation action of $S_n \cong \langle \rho_1,\ldots,\rho_{n-1} \rangle$ on $\mathcal{S} \sqcup \mathcal{S}^{-1}$ is given by
$$
\rho_i: \begin{cases}\kappa_{i, i+1} \longleftrightarrow \kappa_{i, i+1}^{-1}, & \\ \kappa_{i, j} \longleftrightarrow \kappa_{i+1, j} & \quad \text {for} \quad i+2 \leq j \leq n, \\ \kappa_{j, i} \longleftrightarrow \kappa_{j, i+1} & \quad \text {for}\quad  1 \leq j<i, \\ \kappa_{k, l} \longleftrightarrow \kappa_{k, l} & \quad \text {otherwise.}
\end{cases}
$$
More precisely, $S_n$ acts on   $\mathcal{S} \sqcup \mathcal{S}^{-1}$ by the rule
\begin{equation*}\label{actionVLn}
\rho\cdot{\kappa_{i,j}}=\kappa_{\rho(i),\rho(j)} 
\end{equation*}
for each $\rho \in S_n$ and $\kappa_{i,j} \in \mathcal{S}$.
\end{lemma}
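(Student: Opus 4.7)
The plan is to verify each of the four displayed formulas by direct manipulation with the defining relations of $VL_n$ and then to read off the general rule $\rho\cdot\kappa_{i,j} = \kappa_{\rho(i),\rho(j)}$ from the fact that $\rho_1,\ldots,\rho_{n-1}$ generate $S_n$. Since $\rho_i^2 = 1$, conjugation by $\rho_i$ is an involution on $\mathcal{S}\sqcup \mathcal{S}^{-1}$, so only one direction of each $\longleftrightarrow$ needs to be checked; with the convention $\kappa_{j,i} := \kappa_{i,j}^{-1}$ for $i<j$, the first formula reads $\rho_i\kappa_{i,i+1}\rho_i = \kappa_{i,i+1}^{-1}$.

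The first formula is immediate: $\rho_i(y_i\rho_i)\rho_i = \rho_i y_i = (y_i\rho_i)^{-1}$ using (\ref{L1}) and (\ref{vln L3}). The third formula is essentially tautological since, for $j<i$, the element $\kappa_{j,i} = \rho_{i-1}\cdots\rho_{j+1}\, y_j\rho_j\, \rho_{j+1}\cdots\rho_{i-1}$ and flanking this by $\rho_i$ on both sides yields the defining expression for $\kappa_{j,i+1}$ verbatim.

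For the second formula (with $j\ge i+2$), I would expand $\kappa_{i,j} = \rho_{j-1}\cdots\rho_{i+1}\, y_i\rho_i\, \rho_{i+1}\cdots\rho_{j-1}$ and use (\ref{vln  L4}) to commute the outer $\rho_i$'s past $\rho_{j-1},\ldots,\rho_{i+2}$, reducing the goal to the single-word identity $\rho_i\rho_{i+1}y_i\rho_i\rho_{i+1}\rho_i = y_{i+1}\rho_{i+1}$. The latter follows by first applying (\ref{L7}) to replace $\rho_i\rho_{i+1}y_i$ with $y_{i+1}\rho_i\rho_{i+1}$ and then collapsing the resulting tail $\rho_i\rho_{i+1}\rho_i\rho_{i+1}\rho_i$ via the braid relation (\ref{vln L5}) together with $\rho_i^2=\rho_{i+1}^2=1$.

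For the final "otherwise" case, I would split by position. When both indices $k,l$ lie in $\{1,\ldots,i-1\}$ or both lie in $\{i+2,\ldots,n\}$, every generator appearing in $\kappa_{k,l}$ is at index distance $\ge 2$ from $i$, so commutes with $\rho_i$ by (\ref{vln  L4}) and (\ref{L6}), and the claim is immediate. The subtler straddling sub-case $k<i$ and $l>i+1$, where $\rho_i$ sits deep inside the defining product, is the main obstacle: I would commute the outer conjugating $\rho_i$'s inward past $\rho_{l-1},\ldots,\rho_{i+2}$, use (\ref{vln L5}) to rewrite each of the resulting triples $\rho_i\rho_{i+1}\rho_i$ as $\rho_{i+1}\rho_i\rho_{i+1}$ on each flank, and then exploit the fact that $\rho_{i+1}$ commutes with every generator in the central block $\rho_{i-1}\cdots y_k\cdots\rho_{i-1}$ (all of index $\le i-1$) to slide the two new $\rho_{i+1}$'s across one another and cancel them via $\rho_{i+1}^2=1$, returning exactly $\kappa_{k,l}$. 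Once these four generator-level rules are established, the closing formula $\rho\cdot\kappa_{i,j}=\kappa_{\rho(i),\rho(j)}$ follows by induction on the word length of $\rho$ in the generators $\rho_1,\ldots,\rho_{n-1}$, since each single-generator step matches the transposition action on the index pair.
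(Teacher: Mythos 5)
Your proposal is correct and follows essentially the same route as the paper: a direct case-by-case computation with the defining relations, with the same key steps (the identity $\rho_i\rho_{i+1}y_i\rho_i\rho_{i+1}\rho_i=y_{i+1}\rho_{i+1}$ via relation \eqref{L7} and the braid relation, and the $\rho_{i+1}$-insertion-and-cancellation trick in the straddling case). The only difference is organizational — you fix the conjugator $\rho_i$ and run over generators $\kappa_{k,l}$, while the paper fixes $\kappa_{i,j}$ and runs over conjugators $\rho_k$ — which is an equivalent enumeration of the same cases.
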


\begin{proof}
First, we consider action on the generators $\kappa_{i, i+1}$ for $1 \le i \le n-1$.
\begin{enumerate}
\item If $1 \leq k \leq i-2$ or $i+2 \leq k \leq n-1$, then $\rho_k \kappa_{i, i+1} \rho_k=\kappa_{i, i+1}$.
\item If $k=i-1$, then
\begin{eqnarray*}
\rho_k \kappa_{i, i+1} \rho_k & =&  \rho_{i-1} y_i \rho_i \rho_{i-1} \\
& =& \rho_{i-1} y_i \rho_{i-1} (\rho_{i-1} \rho_i \rho_{i-1}) \\
& =& \rho_{i-1} (y_i \rho_{i-1} \rho_{i}) \rho_{i-1} \rho_{i} \\
& =& \rho_i\left(y_{i-1} \rho_{i-1}\right) \rho_i \\
& =& \kappa_{i-1, i+1}.
\end{eqnarray*}
\item If $k=i$, then $\rho_k \kappa_{i, i+1} \rho_k=\kappa_{i, i+1}^{-1}$.
\item If $k=i+1$, then $\rho_k \kappa_{i, i+1} \rho_k=\kappa_{i, i+2}$.
\end{enumerate}

Next, we consider action on $\kappa_{i, j}$ for  $1 \leq i<j \leq n$ with $j \neq i+1$.
\begin{enumerate}
\item If $1 \leq k \leq i-2$ or $j+1 \leq k \leq n-1$, then $\rho_k \kappa_{i, j} \rho_k=\kappa_{i, j}$.
\item For $k=i-1$, we have 
\begin{eqnarray*}
\rho_{i-1} \kappa_{i, j} \rho_{i-1} & =& \rho_{i-1} \rho_{j-1} \rho_{j-2} \ldots \rho_{i+1} \kappa_{i, i+1} \rho_{i+1} \ldots \rho_{j-2} \rho_{j-1} \rho_{i-1} \\
& =& \rho_{j-1} \rho_{j-2} \ldots \rho_{i+1} (\rho_{i-1} \kappa_{i, i+1} \rho_{i-1}) \rho_{i+1} \ldots \rho_{j-2} \rho_{j-1} \\
& =& \rho_{j-1} \rho_{j-2} \ldots \rho_{i+1} \rho_i\left(y_{i-1} \rho_{i-1}\right) \rho_i \rho_{i+1} \ldots \rho_{j-2} \rho_{j-1} \\
& =& \kappa_{i-1, j}.
\end{eqnarray*}

\item For $k=i$, we have 
\begin{eqnarray*}
\rho_i \kappa_{i, j} \rho_i & =& \rho_i \rho_{j-1} \rho_{j-2} \ldots \rho_{i+1} \kappa_{i, i+1} \rho_{i+1} \ldots \rho_{j-2} \rho_{j-1} \rho_i \\
& =& \rho_{j-1} \rho_{j-2} \ldots \rho_i \rho_{i+1} \kappa_{i, i+1} \rho_{i+1} \rho_i \ldots \rho_{j-2} \rho_{j-1} \\
& =& \rho_{j-1} \rho_{j-2} \ldots (\rho_i \rho_{i+1} y_i) \rho_i \rho_{i+1} \rho_i \ldots \rho_{j-2} \rho_{j-1} \\
& =& \rho_{j-1} \rho_{j-2} \ldots \rho_{i+2} y_{i+1} (\rho_i \rho_{i+1} \rho_i \rho_{i+1} \rho_i) \ldots \rho_{j-2} \rho_{j-1} \\
& =& \rho_{j-1} \rho_{j-2} \ldots \rho_{i+2}\left(y_{i+1} \rho_{i+1}\right) \rho_{i+2} \ldots \rho_{j-2} \rho_{j-1} \\
& =& \kappa_{i+1, j}.
\end{eqnarray*}

\item If $i+1 \leq k \leq j-2$, then

\begin{eqnarray*}
\rho_k \kappa_{i, j} \rho_k & =& \rho_k \rho_{j-1} \ldots \rho_{k+1} \rho_k \ldots \rho_{i+1} \kappa_{i, i+1} \rho_{i+1} \ldots \rho_k \rho_{k+1} \ldots \rho_{j-1} \rho_k \\
& =& \rho_{j-1} \ldots \rho_k \rho_{k+1} \rho_k \ldots \rho_{i+1} \kappa_{i, i+1} \rho_{i+1} \ldots \rho_k \rho_{k+1} \rho_k \ldots \rho_{j-1} \\
& =& \rho_{j-1} \ldots \rho_{k+1} \rho_k \rho_{k+1} \rho_{k-1} \ldots \rho_{i+1} \kappa_{i, i+1} \rho_{i+1} \ldots \rho_{k-1} \rho_{k+1} \rho_k \rho_{k+1} \ldots \rho_{j-1} \\
& =& \rho_{j-1} \ldots \rho_{k+1} \rho_k \rho_{k-1} \ldots \rho_{i+1} \kappa_{i, i+1} \rho_{i+1} \ldots \rho_{k-1} \rho_k \rho_{k+1} \ldots \rho_{j-1} \\
& =& \kappa_{i, j} .
\end{eqnarray*}
\item If $k=j-1$, then $\rho_k \kappa_{i, j} \rho_k=\kappa_{i, j-1}$.
\item Finally, if $k=j$, then $\rho_k \kappa_{i, j} \rho_k=\kappa_{i, j+1}$.
\end{enumerate}
This completes the proof of the lemma.
\end{proof}

\begin{theorem}\label{presentation pure virtual triplet group}
 The pure virtual triplet group $P V L_n$ has a presentation with generating set $\mathcal{S}=\left\{\kappa_{i, j} \mid 1 \leq i < j \leq n\right\}$ and defining relations
$$
\kappa_{i , j} \kappa_{i , k} \kappa_{j , k}=\kappa_{j , k} \kappa_{i , k} \kappa_{i , j},
$$ 
where $1\leq i < j <k \leq n.$ 
\end{theorem}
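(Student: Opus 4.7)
The plan is a careful application of the Reidemeister--Schreier method using the Schreier transversal $\mathrm{M}_n$ and the generating set $\{y_1,\ldots,y_{n-1},\rho_1,\ldots,\rho_{n-1}\}$ of $VL_n$. The key inputs we will rely on are the exact sequence $1\to PVL_n \to VL_n \to S_n \to 1$ (with the splitting $\tau_i \mapsto \rho_i$, so that each $\mu\in \mathrm{M}_n$ is literally a word in the $\rho_i$'s) and the conjugation formula $\rho\cdot\kappa_{i,j}=\kappa_{\rho(i),\rho(j)}$ from Lemma~\ref{conj action lemma}.

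The first step is to identify the Schreier generators $\gamma(\mu,a)$ with elements of $\mathcal{S}\sqcup\mathcal{S}^{-1}$. Since every $\mu\in\mathrm{M}_n$ is a word in the $\rho_i$'s, for any $\mu\in\mathrm{M}_n$ and any generator $\rho_i$ the product $\mu\rho_i$ is again a signed reduced word in the $\rho_i$'s whose coset representative is the unique element of $\mathrm{M}_n$ representing $\pi(\mu\rho_i)\in S_n$; a routine check then shows that $\gamma(\mu,\rho_i)=1$ for every $\mu$ (this is the standard fact that a Schreier generator coming from a generator lying in a splitting subgroup is trivial). For the remaining generators $\gamma(\mu,y_i)$, write $y_i=\kappa_{i,i+1}\rho_i$ and observe that
\[
\gamma(\mu,y_i)\;=\;\mu\,\kappa_{i,i+1}\,\rho_i\,\overline{\mu y_i}^{\,-1}\;=\;\mu\,\kappa_{i,i+1}\,\mu^{-1}\,\bigl(\mu\rho_i\overline{\mu y_i}^{\,-1}\bigr),
\]
and the parenthesised factor is $\gamma(\mu,\rho_i)=1$, since $\overline{\mu y_i}=\overline{\mu\rho_i}$. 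Hence $\gamma(\mu,y_i)=\mu\kappa_{i,i+1}\mu^{-1}=\kappa_{\mu^{-1}(i),\mu^{-1}(i+1)}^{\pm 1}$ by Lemma~\ref{conj action lemma}. As $(\mu,i)$ varies over $\mathrm{M}_n\times\{1,\dots,n-1\}$, the pair $(\mu^{-1}(i),\mu^{-1}(i+1))$ realises every unordered pair in $\{1,\dots,n\}$, so the Schreier generators are precisely $\mathcal{S}\sqcup\mathcal{S}^{-1}$.

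The second step is to rewrite each defining relation of $VL_n$, conjugated by every $\mu\in\mathrm{M}_n$. Relations \eqref{vln L3}, \eqref{vln  L4}, \eqref{vln L5} involve only $\rho_i$'s and hence rewrite to the empty word in $PVL_n$. The relation $y_i^2=1$ rewrites to $\kappa_{\mu^{-1}(i),\mu^{-1}(i+1)}\kappa_{\mu^{-1}(i+1),\mu^{-1}(i)}=1$, which is the convention $\kappa_{b,a}=\kappa_{a,b}^{-1}$. The mixed relations \eqref{L6} and \eqref{L7}, after rewriting, encode exactly the conjugation rule of Lemma~\ref{conj action lemma}; but this rule is already absorbed into the identification $\mu\kappa_{i,i+1}\mu^{-1}=\kappa_{\mu^{-1}(i),\mu^{-1}(i+1)}^{\pm 1}$ made above, so no new relation survives.

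The only relations that produce anything non-trivial are the braid relations \eqref{L2}. Take first $\mu=1$ and $i=1$; rewriting $y_1y_2y_1y_2y_1y_2=1$ gives, factor by factor,
\[
\tau(y_1y_2y_1y_2y_1y_2)\;=\;\kappa_{1,2}\,\kappa_{1,3}\,\kappa_{2,3}\,\kappa_{1,2}^{-1}\,\kappa_{1,3}^{-1}\,\kappa_{2,3}^{-1},
\]
which rearranges to $\kappa_{1,2}\kappa_{1,3}\kappa_{2,3}=\kappa_{2,3}\kappa_{1,3}\kappa_{1,2}$. This is the asserted relation for the triple $(1,2,3)$. For a general Schreier coset $\mu$ and index $i$, one applies the same rewriting and uses the conjugation formula $\mu\kappa_{a,b}\mu^{-1}=\kappa_{\mu^{-1}(a),\mu^{-1}(b)}^{\pm 1}$ to see that the rewritten relation is the $\mu$-conjugate of a base braid relation, hence equivalent to the relation for the triple $\mu^{-1}(\{i,i+1,i+2\})$, always in the form $\kappa_{p,q}\kappa_{p,r}\kappa_{q,r}=\kappa_{q,r}\kappa_{p,r}\kappa_{p,q}$ after ordering $p<q<r$. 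Since $S_n$ acts transitively on the ordered triples out of $\{1,\dots,n\}$, every triple $1\le i<j<k\le n$ is realised, yielding exactly the family of relations in the statement.

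The main obstacle is a clean execution of the braid step: one must verify that the six Schreier factors of a conjugated braid relation always telescope, after applying the action rule of Lemma~\ref{conj action lemma}, to the prescribed cubic relation on three $\kappa$'s (and not, say, to a longer word). The cleanest way is to carry out the base calculation for $\mu=1,\,i=1$ as above, and then argue by $S_n$-equivariance: conjugating the entire Reidemeister--Schreier rewrite by $\mu$ and invoking the conjugation rule formally transports the base relation to the $\mu$-conjugate one, with no further bookkeeping required. Together with the triviality of the $\rho$-only and mixed relations established in step two, this shows that $\mathcal{S}$ together with the displayed relations is a complete presentation of $PVL_n$.
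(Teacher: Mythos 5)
Your proposal is correct and follows essentially the same route as the paper: the Reidemeister--Schreier method with the transversal $\mathrm{M}_n$, the identification $\gamma(\mu,\rho_i)=1$ and $\gamma(\mu,y_i)=\mu\kappa_{i,i+1}\mu^{-1}$ via Lemma~\ref{conj action lemma}, triviality of the rewrites of all relations except the braid relations \eqref{L2}, and transitivity of $S_n$ on triples to obtain all relations $\kappa_{i,j}\kappa_{i,k}\kappa_{j,k}=\kappa_{j,k}\kappa_{i,k}\kappa_{i,j}$ with $i<j<k$. The only differences are cosmetic: you index the conjugate as $\kappa_{\mu^{-1}(i),\mu^{-1}(i+1)}$ where the paper's convention gives $\kappa_{\mu(i),\mu(i+1)}$ (immaterial by transitivity), and you invoke $S_n$-equivariance where the paper computes the general rewrite and the six ordering cases explicitly.
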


\begin{proof}
We begin by observing that
$$ \overline{\alpha}  =\alpha \quad \textrm{and} \quad  \overline{\alpha_1 y_{i_1} \ldots \alpha_k y_{i_k}}  =\alpha_1 \rho_{i_1} \ldots \alpha_k \rho_{i_k}$$
for words $\alpha$ and $\alpha_j$ in the generators $\left\{\rho_1, \ldots, \rho_{n-1}\right\}$. Thus, we have
$$
\gamma\left(\mu, \rho_i\right)=\left(\mu \rho_i\right)\left(\mu \rho_i\right)^{-1}=1
$$
and
$$
\gamma\left(\mu, y_i\right)=\left(\mu y_i\right)\left(\mu \rho_i\right)^{-1}=\mu y_i \rho_i \mu^{-1}=\mu \kappa_{i, i+1} \mu^{-1}
$$
for each $\mu \in \mathrm{M}_n$ and $1 \le i \le n-1$. It follows from Lemma \ref{conj action lemma} that each $\gamma\left(\mu, y_i\right)$ lie in $\mathcal{S} \sqcup \mathcal{S}^{-1}$.  Conversely, if $\kappa_{i, j} \in \mathcal{S}$ is an arbitrary element, then conjugation by $\left(\rho_{i-1} \rho_{i-2} \ldots \rho_2 \rho_1\right)\left(\rho_{j-1} \rho_{j-2} \ldots \rho_3 \rho_2\right)$ maps $\kappa_{1,2}$ to $\kappa_{i, j}$. Similarly, conjugation by the element $\left(\rho_{i-1} \rho_{i-2} \ldots \rho_3\rho_2 \right)\left(\rho_{j-1} \rho_{j-2} \ldots  \rho_2 \rho_1\right)$ maps $\kappa_{1,2}$ to $\kappa_{i, j}^{-1} ~(=\kappa_{j,i})$. Thus,  $\mathcal{S} \sqcup \mathcal{S}^{-1}$, and consequently $\mathcal{S}$ generates $PVL_n$.
\par

Next, we determine the defining relations in $P V L_n$. Consider 
an element $\mu=\rho_{i_1} \rho_{i_2} \ldots \rho_{i_k} \in \mathrm{M}_n.$ Since $\gamma\left(\mu, \rho_i\right)=1$ for all $i$, no non-trivial relations for $P V L_n$ can be obtained from the relations \eqref{vln L3}, \eqref{vln L4} and \eqref{vln L5} of $VL_n$. We now consider the remaining relations of $VL_n$.
\begin{enumerate}
\item First, consider the relations $y_i^2=1$ for $1 \leq i \leq n-1$. Then we have
\begin{eqnarray*}
\tau\left(\mu y_i^2 \mu^{-1}\right) & = &\gamma\left(\rho_{i_1} \ldots \rho_{i_k} y_i y_i \rho_{i_k} \ldots \rho_{i_1}\right) \\
& = &\gamma\left(1, \rho_{i_1}\right) \gamma\left(\overline{\rho_{i_1}}, \rho_{i_2}\right) \ldots \gamma\left(\bar{\mu}, y_i\right) \gamma\left(\overline{\mu y_i}, y_i\right) \ldots \gamma\left(\overline{\mu y_i y_i \rho_{i_k} \ldots \rho_{i_2}}, \rho_{i_1}\right) \\
& = &\gamma\left(\bar{\mu}, y_i\right) \gamma\left(\overline{\mu y_i}, y_i\right) \\
& = &\gamma\left(\mu, y_i\right) \gamma\left(\mu \rho_i, y_i\right) \\
& = & \gamma\left(\mu, y_i\right) \left(\mu \rho_i y_i \mu^{-1}\right) \\
& = &\gamma\left(\mu, y_i\right)  \gamma\left(\mu, y_i\right) ^{-1},
\end{eqnarray*}
which does not yield any non-trivial relation in $P V L_n$.

\item For the relations $(y_iy_{i+1})^3=1$ for $1 \leq i \leq n-2$, we have
\begin{eqnarray*}
\tau\left(\mu (y_iy_{i+1})^3 \mu^{-1}\right) & = &\gamma\left(\rho_{i_1} \ldots \rho_{i_k} y_iy_{i+1}y_iy_{i+1}y_iy_{i+1} \rho_{i_k} \ldots \rho_{i_1}\right) \\
& = &\gamma\left(\bar{\mu}, y_i\right) \gamma\left(\overline{\mu y_i}, y_{i+1}\right)\gamma\left(\overline{\mu y_iy_{i+1}}, y_i\right)\\
& & \gamma\left(\overline{\mu y_iy_{i+1}y_i}, y_{i+1}\right) \gamma\left(\overline{\mu y_iy_{i+1}y_iy_{i+1}}, y_{i}\right)\gamma\left(\overline{\mu y_iy_{i+1}y_iy_{i+1}y_{i}}, y_{i+1}\right) \\
& = &\gamma\left(\mu, y_i\right) \gamma\left(\mu \rho_i, y_{i+1}\right)\gamma\left(\mu \rho_i\rho_{i+1}, y_i\right)\\
& & \gamma\left(\mu \rho_i \rho_{i+1} \rho_i, y_{i+1}\right) \gamma\left(\mu \rho_i \rho_{i+1} \rho_i \rho_{i+1}, y_{i}\right)\gamma\left(\mu \rho_i \rho_{i+1} \rho_i \rho_{i+1} \rho_{i}, y_{i+1}\right) \\
& = &  \left(\mu \kappa_{i, i+1} \mu^{-1}\right) \left(\mu \kappa_{i, i+2} \mu^{-1}\right) \left(\mu \kappa_{i+1, i+2} \mu^{-1}\right)\\
& & \left(\mu \kappa_{i, i+1} \mu^{-1}\right)^{-1} \left(\mu \kappa_{i, i+2}\mu^{-1}\right)^{-1}  \left(\mu \kappa_{i+1, i+2} \mu^{-1}\right)^{-1}\\
& = &  \kappa_{\mu(i), \mu(i+1)} \kappa_{\mu(i), \mu(i+2)} \kappa_{\mu(i+1), \mu(i+2)}\kappa_{\mu(i), \mu(i+1)}^{-1} \kappa_{\mu(i),\mu(i+2)}^{-1} \kappa_{\mu(i+1), \mu(i+2)}^{-1}.
\end{eqnarray*}

This gives non-trivial relations 
\begin{equation}\label{P3}
\kappa_{\mu(i), \mu(i+1)} \kappa_{\mu(i), \mu(i+2)} \kappa_{\mu(i+1), \mu(i+2)}= \kappa_{\mu(i+1), \mu(i+2)} \kappa_{\mu(i),\mu(i+2)} \kappa_{\mu(i), \mu(i+1)}
\end{equation}
in $PVL_n$. Since $S_n$ action on $\{1,2,\ldots,n\}$ is triply-transitive, \eqref{P3} gives the non-trivial relations
\begin{equation}\label{P1}
\kappa_{i , j} \kappa_{i , k} \kappa_{j , k}=\kappa_{j , k} \kappa_{i , k} \kappa_{i , j},
\end{equation}
where $i,j,k$ are all distinct. We claim that the relations \eqref{P1} are consequences of the relations
\begin{equation}\label{P2}
\kappa_{p, q} \kappa_{p , r} \kappa_{q , r}=\kappa_{q , r} \kappa_{p , r} \kappa_{p , q},
\end{equation}
where $1\leq p < q < r \leq n$.
\medskip
Note that, given three distinct integers $i,j,k$, one of the following holds:
\begin{enumerate}
 \begin{multicols}{3}
\item $i < j < k$
\item $i < k < j $
\item $k < j <i $
\item $k < i < j$
\item $j < i < k$
\item $j < k < i$
 \end{multicols}
\end{enumerate}
\par

Nothing needs to be done in Case (a). In Case (b), the relation $\eqref{P1}$ becomes 
$$\kappa_{i , j} \kappa_{i , k} \kappa_{k , j}^{-1}=\kappa_{k , j}^{-1} \kappa_{i , k} \kappa_{i , j}.$$
Rewriting it gives
$$\kappa_{k, j}\kappa_{i , j} \kappa_{i , k} = \kappa_{i , k} \kappa_{i , j} \kappa_{k , j},$$
which is the relation $\eqref{P2}$. In Case (c),  the relation $(\ref{P1})$ becomes
$$\kappa_{j , i}^{-1} \kappa_{k , i}^{-1} \kappa_{k , j}^{-1}=\kappa_{k , j}^{-1} \kappa_{k , i}^{-1} \kappa_{j , i}^{-1}.$$
Taking inverses give
$$\kappa_{k, j}\kappa_{k , i} \kappa_{j , i} = \kappa_{j , i} \kappa_{k , i} \kappa_{k , j},$$
which is the relation $\eqref{P2}$. The remaining cases can sorted out in a similar manner.

\item In case of the relations $\left(y_i \rho_j\right)^2=1$ for $|i-j| \ge 2$, we obtain

\begin{eqnarray*}
\tau\left(\mu y_i \rho_j y_i \rho_j \mu^{-1}\right) & =& \gamma\left(\bar{\mu}, y_i\right) \gamma\left(\overline{\mu y_i}, \rho_j\right) \gamma\left(\overline{\mu y_i \rho_j}, y_i\right) \gamma\left(\overline{\mu y_i \rho_j y_i}, \rho_j\right) \\
 & =& \gamma\left(\bar{\mu}, y_i\right) \gamma\left(\overline{\mu y_i \rho_j}, y_i\right) \\
 & =& \gamma\left(\mu, y_i\right) \gamma\left(\mu \rho_i \rho_j, y_i\right) \\
 & =& \gamma\left(\mu, y_i\right) \left(\mu \rho_i \rho_j y_i \rho_i \rho_j \rho_i \mu^{-1}\right) \\
 & =& \gamma\left(\mu, y_i\right)  \left(\mu \rho_i y_i \mu^{-1}\right) \\
 & =& \gamma\left(\mu, y_i\right) \left(\mu y_i \rho_i \mu^{-1}\right)^{-1}\\
 & =& \gamma\left(\mu, y_i\right) \gamma\left(\mu, y_i\right)^{-1},
\end{eqnarray*}

which does not yield any non-trivial relation in $P V L_n$.

\item Finally, consider the relations $\rho_i y_{i+1} \rho_i \rho_{i+1} y_i \rho_{i+1}=1$ for $1 \leq i \leq n-2$. Computing

\begin{eqnarray*}
\tau\left(\mu \rho_i y_{i+1} \rho_i \rho_{i+1} y_i \rho_{i+1} \mu^{-1}\right) & = &\gamma\left(\overline{\mu \rho_i}, y_{i+1}\right) \gamma\left(\overline{\mu \rho_i y_{i+1} \rho_i \rho_{i+1}}, y_i\right) \\
 & =& \gamma\left(\mu \rho_i, y_{i+1}\right) \gamma\left(\mu \rho_i \rho_{i+1} \rho_i \rho_{i+1}, y_i\right) \\
 & =& \gamma\left(\mu \rho_i, y_{i+1}\right)\left(\mu \rho_{i+1} \rho_i y_i \rho_{i+1} \mu^{-1}\right) \\
 & =& \gamma\left(\mu \rho_i, y_{i+1}\right) \left(\mu \rho_{i+1} \rho_i \rho_{i+1} \rho_{i+1} y_i \rho_{i+1} \mu^{-1}\right) \\
 & =& \gamma\left(\mu \rho_i, y_{i+1}\right) \left(\mu \rho_i  \rho_{i+1} \rho_i \rho_iy_{i+1} \rho_i \mu^{-1}\right) \\
 & =& \gamma\left(\mu \rho_i, y_{i+1}\right) \left(\mu \rho_i \rho_{i+1} y_{i+1} \rho_i \mu^{-1}\right) \\
 & =& \gamma\left(\mu \rho_i, y_{i+1}\right) \gamma\left(\mu \rho_i, y_{i+1}\right)^{-1},
\end{eqnarray*}
gives only trivial relations in $P V L_n$. 
\end{enumerate}
Thus, the only non-trivial relations amongst elements of $\mathcal{S}$ are of the form $\kappa_{i , j} \kappa_{i , k} \kappa_{j , k}=\kappa_{j , k} \kappa_{i , k} \kappa_{i , j}$, where $1\leq i < j <k \leq n$. This completes the proof of the theorem.
\end{proof}
\medskip

\section{Crystallographic quotients of virtual triplet groups}
A closed subgroup $H$ of a Hausdorff topological group $G$ is said to be {\it uniform} if $G/H$ is a compact space.  A discrete and uniform subgroup $G$ of $~\mathbb{R}^n \rtimes \Oo(n, \mathbb{R})$ is called a {\it crystallographic group} of dimension $n$. If in addition $G$ is torsion free, then it is called a {\it Bieberbach group} of dimension $n$. The following characterisation of crystallographic groups is well-known \cite[Lemma 8]{MR3595797}.

\begin{lemma}
 A group $G$ is a crystallographic group if and only if there is an integer $n$, a finite group $H$ and a short exact sequence
$$
0 \longrightarrow \mathbb{Z}^n \longrightarrow G \stackrel{\eta}{\longrightarrow} H \longrightarrow 1
$$
such that the integral representation $\Theta : H \longrightarrow \Aut\left(\mathbb{Z}^n\right)$ defined by $\Theta(h)(x)=z x z^{-1}$ is faithful, where  $h \in H$, $x \in \mathbb{Z}^n$ and $z \in G$ is such that $\eta(z)=h$. 
\end{lemma}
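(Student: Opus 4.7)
I would argue the two implications separately.

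For the direction $(\Rightarrow)$, suppose $G$ is a discrete uniform subgroup of $\mathbb{R}^n \rtimes \Oo(n, \mathbb{R})$. First I would invoke Bieberbach's first theorem, which asserts that $T := G \cap \mathbb{R}^n$ is a lattice of full rank in $\mathbb{R}^n$, so $T \cong \mathbb{Z}^n$, and that the image of $G$ under the projection $\mathbb{R}^n \rtimes \Oo(n, \mathbb{R}) \to \Oo(n, \mathbb{R})$, namely $H := G/T$, is finite. This yields the short exact sequence with the stated properties. To check that $\Theta$ is faithful, suppose $h \in H$ acts trivially on $T$, and lift to $z \in G$ with $\eta(z)=h$. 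Then $z x z^{-1} = x$ for every $x \in T$, so the orthogonal part of $z$ fixes pointwise a set spanning $\mathbb{R}^n$ over $\mathbb{R}$, forcing $h=1$ in $\Oo(n,\mathbb{R})$ and hence in $H$.

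For the direction $(\Leftarrow)$, suppose the abstract short exact sequence is given with $H$ finite and $\Theta$ faithful. The plan is to produce a discrete cocompact embedding of $G$ into $\mathbb{R}^n \rtimes \Oo(n, \mathbb{R})$ by extending scalars from $\mathbb{Z}$ to $\mathbb{R}$. The key algebraic input is the cohomological vanishing $H^2(H, \mathbb{R}^n) = 0$, valid because $H$ is finite while the $H$-module $\mathbb{R}^n$ is uniquely divisible. Consequently, a $2$-cocycle $\phi : H \times H \to \mathbb{Z}^n$ representing the class of the given extension becomes a coboundary $\phi = d\psi$ of some $\psi : H \to \mathbb{R}^n$ after pushing coefficients into $\mathbb{R}^n$. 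Fixing a set-theoretic section $s : H \to G$, I would define
$$\Phi : G \longrightarrow \mathbb{R}^n \rtimes H, \qquad x \cdot s(h) \longmapsto \bigl(x + \psi(h),\, h\bigr), \qquad x \in \mathbb{Z}^n,\ h \in H,$$
and verify using the cocycle relation for $\phi$ that $\Phi$ is an injective homomorphism. To land inside $\mathbb{R}^n \rtimes \Oo(n,\mathbb{R})$, I would then average an arbitrary inner product on $\mathbb{R}^n$ over $H$ acting through $\Theta \otimes_{\mathbb{Z}} \mathbb{R}$ to obtain an $H$-invariant inner product; this realises $H$ as a subgroup of $\Oo(n, \mathbb{R})$, and faithfulness of $\Theta$ is precisely what makes this realisation injective.

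It remains to verify that the image is discrete and uniform in $\mathbb{R}^n \rtimes \Oo(n, \mathbb{R})$. Discreteness follows because the translation parts of $\Phi(G)$ form the finite union $\bigcup_{h \in H}\bigl(\mathbb{Z}^n + \psi(h)\bigr)$ of cosets of the lattice $\mathbb{Z}^n$, which is discrete in $\mathbb{R}^n$, while the rotation parts form the finite subgroup $\Theta(H) \subseteq \Oo(n, \mathbb{R})$. For uniformity, the coset space $(\mathbb{R}^n \rtimes \Oo(n, \mathbb{R}))/\Phi(G)$ fibres over the compact space $\Oo(n, \mathbb{R})/\Theta(H)$ with fibres homeomorphic to the compact torus $\mathbb{R}^n/(\mathbb{Z}^n + \psi(H))$, and is therefore compact. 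The step I expect to require the most care is the cocycle bookkeeping that links the three ingredients: matching the original cocycle $\phi$ to the trivial cocycle on $\mathbb{R}^n \rtimes H$ via $\psi$, transporting the $H$-action consistently through the scalar extension, and ensuring the averaged inner product realises the given representation $\Theta$ and not some twist of it; everything else then reduces to routine verification.
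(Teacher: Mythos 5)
Your argument is correct, and it is the standard proof of this characterisation; note that the paper itself does not prove the lemma at all, but simply quotes it from Gon\c{c}alves--Guaschi--Ocampo \cite{MR3595797}, so there is no in-paper proof to compare against. Both directions are sound: the forward direction is exactly Bieberbach's first theorem plus the observation that an orthogonal map fixing a full-rank lattice pointwise is the identity, and the reverse direction is the usual scalar-extension argument using $H^2(H,\mathbb{R}^n)=0$ for $H$ finite, together with averaging an inner product to realise the faithful representation $\Theta$ inside $\Oo(n,\mathbb{R})$. Two small points of hygiene: for injectivity of $\Phi$ you should normalise the section so that $s(1)=1$ and $\psi(1)=0$ (otherwise the kernel computation picks up $-\psi(1)$); and the fibre of $(\mathbb{R}^n\rtimes \Oo(n,\mathbb{R}))/\Phi(G)\to \Oo(n,\mathbb{R})/\Theta(H)$ is a quotient of the torus $\mathbb{R}^n/\mathbb{Z}^n$ rather than literally ``$\mathbb{R}^n/(\mathbb{Z}^n+\psi(H))$'' (the latter is not a subgroup), though compactness follows either way --- alternatively one can simply exhibit a compact set $C$ with $C\cdot\Phi(G)=\mathbb{R}^n\rtimes \Oo(n,\mathbb{R})$. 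Neither issue is a genuine gap.
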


The group $H$ is called the {\it holonomy group} of $G$, the integer $n$ is called the {\it dimension} of $G$, and $\Theta : H \longrightarrow \Aut\left(\mathbb{Z}^n\right)$ is called the {\it holonomy representation} of $G$. It is known that any
finite group is the holonomy group of some flat manifold \cite[Theorem III.5.2]{MR0862114}. Furthermore, there is a correspondence between the class of Bieberbach groups and the class of compact flat Riemannian manifolds \cite[Theorem 2.1.1]{MR1482520}. Crystallographic quotients of virtual braid and virtual twin groups have been considered in \cite[Theorem 2.4]{Ocampo-Santos-2021} and \cite[Theorem 3.5]{Ocampo-Santos-2021}, respectively.

\begin{theorem}
 For $n \geq 2$, there is a split short exact sequence
$$
1 \longrightarrow \mathbb{Z}^{n(n-1)} \longrightarrow V B_n / V P_n^{'} \longrightarrow S_n \longrightarrow 1
$$
such that the group $V B_n / V P_n^{'}$ is a crystallographic group of dimension $n(n-1)$.
\end{theorem}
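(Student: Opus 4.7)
\smallskip

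The plan is to leverage the split short exact sequence
$$1 \longrightarrow VP_n \longrightarrow VB_n \longrightarrow S_n \longrightarrow 1,$$
which is split via $\tau_i \mapsto \rho_i$, and push it down to a short exact sequence at the level of the quotient $VB_n/VP_n'$. Since $VP_n'$ is a characteristic subgroup of $VP_n$ and $VP_n$ is normal in $VB_n$, the subgroup $VP_n'$ is normal in $VB_n$. Taking the quotient produces the split short exact sequence
$$1 \longrightarrow VP_n / VP_n' \longrightarrow VB_n / VP_n' \stackrel{\eta}{\longrightarrow} S_n \longrightarrow 1,$$
and the splitting $S_n \to VB_n/VP_n'$ is inherited from the one above.

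Next, I would identify $VP_n/VP_n'$ with $\mathbb{Z}^{n(n-1)}$. This uses Bardakov's finite presentation of the pure virtual braid group $VP_n$, which has generating set $\{\lambda_{i,j} \mid 1 \leq i \neq j \leq n\}$ (with $|\{\lambda_{i,j}\}| = n(n-1)$) and relations of the form $\lambda_{i,j}\lambda_{k,l} = \lambda_{k,l}\lambda_{i,j}$ for distinct indices and the mixed relation $\lambda_{i,j}\lambda_{i,k}\lambda_{j,k} = \lambda_{j,k}\lambda_{i,k}\lambda_{i,j}$. Abelianising kills the commutator relations and turns the mixed relation into the trivial identity $\lambda_{i,j} + \lambda_{i,k} + \lambda_{j,k} = \lambda_{j,k} + \lambda_{i,k} + \lambda_{i,j}$, so that $VP_n/VP_n' \cong \mathbb{Z}^{n(n-1)}$ freely generated by the cosets $\overline{\lambda_{i,j}}$.

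The crucial step is to verify the faithfulness of the induced holonomy representation $\Theta \colon S_n \longrightarrow \Aut(\mathbb{Z}^{n(n-1)})$. The conjugation action of $VB_n$ on $VP_n$ descends, after abelianising, to an $S_n$-action on $\mathbb{Z}^{n(n-1)}$. A direct computation (analogous to Lemma \ref{conj action lemma}) shows that conjugation by $\rho_k$ permutes the generators by the rule $\rho_k \cdot \overline{\lambda_{i,j}} = \overline{\lambda_{\tau_k(i), \tau_k(j)}}$, so that the $S_n$-action is the permutation action $\sigma \cdot \overline{\lambda_{i,j}} = \overline{\lambda_{\sigma(i), \sigma(j)}}$ on the basis. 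To prove that this is faithful for $n \geq 2$, suppose $\sigma \neq 1$ and pick any $i$ with $\sigma(i) \neq i$. Then choose $j \neq i, \sigma(i), \sigma^{-1}(i)$ (which is possible for $n \geq 3$; for $n=2$ faithfulness is immediate as $S_2$ acts by inversion $\overline{\lambda_{1,2}} \leftrightarrow \overline{\lambda_{2,1}}$). The ordered pair $(\sigma(i), \sigma(j))$ differs from $(i, j)$, so the basis element $\overline{\lambda_{i,j}}$ is moved by $\sigma$, establishing faithfulness.

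Combining these steps, the short exact sequence above has free abelian kernel of rank $n(n-1)$, finite quotient $S_n$, and faithful holonomy representation, which are precisely the conditions of the characterisation lemma. Therefore $VB_n/VP_n'$ is a crystallographic group of dimension $n(n-1)$. The main obstacle, and the step that requires the most care, is pinning down the explicit $S_n$-action on the abelianised pure virtual braid group and verifying faithfulness uniformly in $n$; the rest is essentially a formal consequence of the split extension and Bardakov's presentation.
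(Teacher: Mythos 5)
Your proof is correct and follows exactly the strategy the paper itself uses for the analogous virtual triplet result (Theorem \ref{cryst. quotient virtual triplet mod pure}); note that the paper does not actually prove the $VB_n$ statement but quotes it from Ocampo--Dos Santos, whose argument is likewise the one you give. One small slip: your choice of $j \neq i, \sigma(i), \sigma^{-1}(i)$ is impossible when $n=3$ and $\sigma$ is a $3$-cycle, but those extra conditions are unnecessary anyway, since the $n(n-1)$ generators $\lambda_{i,j}$ are indexed by \emph{ordered} pairs and $(\sigma(i),\sigma(j)) \neq (i,j)$ already because the first coordinates differ; so faithfulness holds for any $j \neq i$.
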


\begin{theorem}
 For $n \geq 2$, there is a split short exact sequence

$$
1 \longrightarrow \mathbb{Z}^{n(n-1) / 2} \longrightarrow VT_n /PVT_n^{'} \longrightarrow S_n \longrightarrow 1
$$
such that the group $VT_n /PVT_n^{'}$ is a crystallographic group of dimension $n(n-1)/2$.
\end{theorem}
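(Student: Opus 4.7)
The strategy is to obtain the desired short exact sequence by quotienting the known split exact sequence
$$1 \longrightarrow PVT_n \longrightarrow VT_n \longrightarrow S_n \longrightarrow 1$$
by the commutator subgroup $PVT_n^{'}$. Since $PVT_n$ is normal in $VT_n$ and $PVT_n^{'}$ is a characteristic subgroup of $PVT_n$, the subgroup $PVT_n^{'}$ is normal in $VT_n$ and the quotient is well-defined. By the result of Naik--Nanda--Singh cited in the proof of Proposition \ref{commutator-subgroup-freeness}, the group $PVT_n$ is an irreducible right-angled Artin group on the $n(n-1)/2$ generators $\{\lambda_{i,j} \mid 1 \leq i < j \leq n\}$, so its abelianization is the free abelian group $\mathbb{Z}^{n(n-1)/2}$ with basis given by the images of the $\lambda_{i,j}$. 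This identifies the kernel.

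The splitting of the new sequence is inherited directly from the splitting of the original: the homomorphism $S_n \to VT_n$ defined by $\tau_i \mapsto \rho_i$ splits the projection $VT_n \to S_n$, and composing with the natural surjection $VT_n \to VT_n/PVT_n^{'}$ yields the required section.

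To conclude that $VT_n/PVT_n^{'}$ is crystallographic, I would appeal to the characterization recalled in the lemma preceding the statement, which reduces the problem to verifying that the induced holonomy representation $\Theta \colon S_n \to \Aut\bigl(\mathbb{Z}^{n(n-1)/2}\bigr)$ is faithful. The first step is to compute the conjugation action of each $\rho_k$ on the generators $\lambda_{i,j}$, in the spirit of Lemma \ref{conj action lemma}. Using the explicit formula $\lambda_{i,j} = \rho_{j-1}\cdots\rho_{i+1}(s_i\rho_i)\rho_{i+1}\cdots\rho_{j-1}$ together with the defining relations of $VT_n$, one expects to obtain $\rho_k \lambda_{i,j} \rho_k = \lambda_{\tau_k(i),\, \tau_k(j)}$ whenever $\{i,j\} \neq \{k, k+1\}$, and $\rho_k \lambda_{k,k+1} \rho_k = \lambda_{k,k+1}^{-1}$. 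Passing to the abelianization, $S_n$ then acts on the $\mathbb{Z}$-basis indexed by unordered pairs $\{i,j\}$ by a signed permutation whose underlying permutation is the natural $S_n$-action on two-element subsets of $\{1, \dots, n\}$.

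Faithfulness of $\Theta$ then follows in two cases. For $n \geq 3$, the permutation action of $S_n$ on the $n(n-1)/2$ unordered pairs is already faithful, so $\Theta$ is injective regardless of the accompanying sign changes. For $n = 2$, the group $S_2 \cong \mathbb{Z}/2$ acts on $\mathbb{Z} \cong PVT_2/PVT_2^{'}$ by inversion, which is again faithful. I anticipate the main technical obstacle to be the explicit determination of the $\rho_k$-conjugation action on the $\lambda_{i,j}$; this is a case analysis in the style of Lemma \ref{conj action lemma}, and while not difficult in principle, requires careful bookkeeping. Once that action is in hand, the crystallographic conclusion is immediate.
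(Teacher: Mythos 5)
Your proposal is correct and follows essentially the same route as the paper: the paper quotes this statement from Ocampo--Dos Santos and proves the analogous result for $VL_n$ (Theorem \ref{cryst. quotient virtual triplet mod pure}) by exactly your argument --- quotient the split sequence $1 \to PVT_n \to VT_n \to S_n \to 1$ by the characteristic subgroup $PVT_n^{'}$, identify the kernel via the right-angled Artin presentation of $PVT_n$, and check faithfulness of the holonomy representation using the signed permutation action of Lemma \ref{conj action lemma}. Your explicit separation of the $n\geq 3$ case (faithfulness of the $S_n$-action on unordered pairs) from the $n=2$ case (action by inversion on $\mathbb{Z}$) correctly fills in the detail the paper leaves implicit.
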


We prove the following result for virtual triplet groups.

\begin{theorem}\label{cryst. quotient virtual triplet mod pure}
 For $n \geq 2$, there is a split short exact sequence
$$
1 \longrightarrow \mathbb{Z}^{n(n-1)/2} \longrightarrow V L_n / PV L_n^{'} \longrightarrow S_n \longrightarrow 1
$$
such that the group $VL_n / PVL_n^{'}$ is a crystallographic group of dimension $n(n-1)/2$.
\end{theorem}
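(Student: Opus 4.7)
The plan is to obtain the crystallographic structure on $V L_n / PV L_n^{'}$ by reducing the natural split short exact sequence
$$1 \longrightarrow PVL_n \longrightarrow VL_n \stackrel{\pi}{\longrightarrow} S_n \longrightarrow 1$$
modulo the subgroup $PVL_n^{'}$. Since $PVL_n^{'}$ is characteristic in $PVL_n$ and $PVL_n$ is normal in $VL_n$, the subgroup $PVL_n^{'}$ is normal in $VL_n$, so we obtain
$$1 \longrightarrow PVL_n/PVL_n^{'} \longrightarrow VL_n/PVL_n^{'} \longrightarrow S_n \longrightarrow 1.$$
The section $\tau_i \mapsto \rho_i$ of the original sequence descends to a splitting of this quotient sequence. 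It then remains to identify the kernel with $\mathbb{Z}^{n(n-1)/2}$ and to check that the holonomy representation is faithful; the characterisation of crystallographic groups recalled immediately before the theorem will then give the conclusion.

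The first step, identifying $PVL_n / PVL_n^{'}$, is immediate from Theorem \ref{presentation pure virtual triplet group}. That theorem presents $PVL_n$ on the $\binom{n}{2}$ generators $\kappa_{i,j}$ ($i<j$) subject only to the relations $\kappa_{i,j}\kappa_{i,k}\kappa_{j,k} = \kappa_{j,k}\kappa_{i,k}\kappa_{i,j}$, each of which becomes tautological after abelianising. Hence $PVL_n/PVL_n^{'} \cong \mathbb{Z}^{n(n-1)/2}$, freely generated by the images $\overline{\kappa}_{i,j}$.

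The main content is verifying that the induced holonomy representation $\Theta : S_n \to \Aut\bigl(\mathbb{Z}^{n(n-1)/2}\bigr)$ is faithful. By Lemma \ref{conj action lemma}, the conjugation action satisfies $\rho \cdot \kappa_{i,j} = \kappa_{\rho(i),\rho(j)}$ on $\mathcal{S} \sqcup \mathcal{S}^{-1}$, so in the abelianisation $\Theta$ acts as a signed permutation of the basis: $\Theta(\rho)(\overline{\kappa}_{i,j}) = +\overline{\kappa}_{\rho(i),\rho(j)}$ when $\rho(i)<\rho(j)$, and $\Theta(\rho)(\overline{\kappa}_{i,j}) = -\overline{\kappa}_{\rho(j),\rho(i)}$ otherwise. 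The key point to check is that $\Theta(\rho)=\Id$ forces $\rho = \id$. For $n=2$, this is visible directly: the single nontrivial element $\rho_1$ sends $\overline{\kappa}_{1,2}$ to $-\overline{\kappa}_{1,2}$. For $n \geq 3$, triviality of $\Theta(\rho)$ forces $\{\rho(i),\rho(j)\} = \{i,j\}$ for every pair $i<j$; a short combinatorial argument (pick three distinct indices and compare the two-element sets they determine) then yields $\rho = \id$.

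The main obstacle is really only the bookkeeping in the faithfulness step, in particular the care needed with signs when $\rho(i)>\rho(j)$, since one must distinguish whether $\Theta(\rho)$ could act trivially by inverting an even number of basis elements while permuting others; the signed-permutation description above rules this out because a pure permutation component can only be trivial when it fixes every $2$-subset of $\{1,\dots,n\}$, and fixing every $2$-subset (for $n \geq 3$) forces the identity on the base set. Once $\Theta$ is faithful, the group $VL_n/PVL_n^{'}$ is crystallographic of dimension $n(n-1)/2$ with holonomy $S_n$, completing the proof.
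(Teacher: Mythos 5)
Your proposal is correct and follows essentially the same route as the paper: pass the split sequence $1 \to PVL_n \to VL_n \to S_n \to 1$ to the quotient by $PVL_n^{'}$, read off $PVL_n/PVL_n^{'} \cong \mathbb{Z}^{n(n-1)/2}$ from the presentation of Theorem \ref{presentation pure virtual triplet group}, and use the signed-permutation action of Lemma \ref{conj action lemma} to check faithfulness of the holonomy representation. Your sign bookkeeping is in fact slightly more than needed, since $\Theta(\rho)(\overline{\kappa}_{i,j})=\overline{\kappa}_{i,j}$ already forces $\rho(i)=i$ and $\rho(j)=j$ (the unordered pair must be $\{i,j\}$ and the sign must be $+$), so $\rho=\id$ follows for all $n\ge 2$ without a separate combinatorial step.
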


\begin{proof}
Notice that $PV L_n / PV L_n^{'} \cong \mathbb{Z}^{n(n-1)/2}$. The split short exact sequence $$1 \to PVL_n \to VL_n \to S_n \to 1$$ induces the desired split short exact sequence
\begin{equation*}\label{SESVTn2}
1 \longrightarrow \mathbb{Z}^{n(n-1)/2} \longrightarrow VL_n/PVL_n^{'} \stackrel{\pi}{\longrightarrow} S_n \longrightarrow 1.
\end{equation*}
Let $\Theta: S_n\to \Aut(PVL_n / PVL_n^{'}))$ be the action of $S_n$ on $PVL_n / PVL_n^{'}$, which is induced by the action given in Lemma \ref{actionVLn}. That is, for $\rho \in S_n$ and $\kappa_{i, j} \in PVL_n$, we have $\Theta(\rho) (\kappa_{i, j})=\kappa_{\rho(i), \rho(j)} \mod PVL_n^{'}$. Thus,
$\kappa_{\rho(i), \rho(j)} = \kappa_{i, j} \mod PVL_n^{'}$ for all $1 \le i< j \le n$ if and only if  $\rho=1$.  Hence, the holonomy representation is faithful and $VL_n / PVL_n^{'}$ is a crystallographic group.
\end{proof} 
\medskip

Next, we determine torsion in the crystallographic quotient considered above. Since $VL_n =PVL_n \rtimes S_n$, any $\beta \in V L_n$ can be written uniquely as $\beta=w \theta$ for some $w \in PVL_n $ and $\theta \in S_n$. The element $\beta$ acts on the set $\left\{\kappa_{i, j} \mid 1 \leq i \ne j \leq n\right\}$ via the action of its image $\pi(\beta)=\theta$ (see Lemma \ref{conj action lemma}).  We denote the orbit of an element $\kappa_{i, j}$ under this action by $\mathcal{O}_\theta\left(\kappa_{i, j}\right)$. The following theorem is an analogue of similar results for virtual braid groups and virtual twin groups \cite[Theorem 2.7 and Theorem 3.6]{Ocampo-Santos-2021}.

\begin{theorem}\label{cry-torsion-Ln}
For each $2 \leq t \leq n$, let $1 \leq r_1<r_2<\cdots<r_{t-1} \leq n$ be a sequence of consecutive integers. Let $\pi\left(\rho_{r_1} \rho_{r_2} \ldots \rho_{r_{t-1}}\right)=\theta$ and $\mathcal{T}_\theta$ denote a set of representatives of orbits of the action of $\rho_{r_1} \rho_{r_2} \ldots \rho_{r_{t-1}}$ on $\left\{\kappa_{i, j} \mid 1 \leq i \ne j \leq n\right\}$. Then the coset of the element 
$$( \prod_{1 \leq i<j \leq n} \kappa_{i, j}^{a_{i, j}}) \left(\rho_{r_1} \rho_{r_2} \ldots \rho_{r_{t-1}}\right)$$ has order $t$ in $VL_n / PVL_n^{'}$ if and only if
$$
a_{r,s}+a_{\theta^{-1}(r), \theta^{-1}(s)} + \cdots  +  a_{\theta^{-(t-1)}(r), \theta^{-(t-1)}(s)}=0
$$
for all $\kappa_{r, s} \in \mathcal{T}_\theta$. Here, $a_{i, j} \in \mathbb{Z}$ and $a_{\theta^\ell(r), \theta^\ell(s)} :=-a_{\theta^\ell(s), \theta^\ell(r)}$ whenever $1 \leq \theta^\ell(s)<\theta^\ell(r) \leq n$.
\end{theorem}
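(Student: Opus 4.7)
The plan is to reduce the question to a computation inside the abelian group $PVL_n/PVL_n' \cong \mathbb{Z}^{n(n-1)/2}$ using the split short exact sequence of Theorem \ref{cryst. quotient virtual triplet mod pure}. Write $\sigma = \rho_{r_1}\rho_{r_2}\cdots\rho_{r_{t-1}} \in VL_n$ and $w = \prod_{1 \le i<j \le n}\kappa_{i,j}^{a_{i,j}} \in PVL_n$, so that the element in question is $g = w\sigma$. Since $r_1,r_2,\ldots,r_{t-1}$ are consecutive integers, $\theta := \pi(\sigma)$ is the $t$-cycle $(r_1,r_1+1,\ldots,r_1+t-1)$ in $S_n$, of order exactly $t$. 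Because $\langle \rho_1,\ldots,\rho_{n-1}\rangle \cong S_n$ inside $VL_n$, we also have $\sigma^t = 1$ in $VL_n$. As $\pi(g^k) = \theta^k$, the order of $[g]$ in $VL_n/PVL_n'$ is a positive multiple of $t$, so it suffices to characterise when $g^t \equiv 1 \pmod{PVL_n'}$.

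Next, I would expand $g^t$ inside $VL_n/PVL_n'$. Using $\sigma^t = 1$ and iterating the identity $w\sigma = \sigma(\sigma^{-1}w\sigma)$ gives
\begin{equation*}
g^t \;=\; w\cdot (\sigma w\sigma^{-1})\cdot (\sigma^2 w\sigma^{-2}) \cdots (\sigma^{t-1} w\sigma^{-(t-1)}) \cdot \sigma^t \;=\; \prod_{\ell=0}^{t-1} \sigma^\ell w \sigma^{-\ell}.
\end{equation*}
Each factor lies in $PVL_n$, so modulo $PVL_n'$ the product is commutative and can be reorganised freely. By Lemma \ref{conj action lemma}, conjugation by $\sigma$ acts on $\mathcal{S}\sqcup\mathcal{S}^{-1}$ by the permutation $\theta$ on indices, so
\begin{equation*}
\sigma^\ell w \sigma^{-\ell} \;\equiv\; \prod_{1\le i<j\le n} \kappa_{\theta^\ell(i),\theta^\ell(j)}^{\,a_{i,j}} \pmod{PVL_n'}.
\end{equation*}

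Now I would compute, for each fixed pair $r<s$, the total exponent of $\kappa_{r,s}$ in $g^t$. Solving $\{\theta^\ell(i),\theta^\ell(j)\} = \{r,s\}$ gives $\{i,j\} = \{\theta^{-\ell}(r),\theta^{-\ell}(s)\}$; the contribution is $a_{\theta^{-\ell}(r),\theta^{-\ell}(s)}$ when $\theta^{-\ell}(r)<\theta^{-\ell}(s)$, and $-a_{\theta^{-\ell}(s),\theta^{-\ell}(r)}$ otherwise, the sign coming from $\kappa_{p,q}=\kappa_{q,p}^{-1}$. Under the sign convention $a_{\theta^\ell(r),\theta^\ell(s)} := -a_{\theta^\ell(s),\theta^\ell(r)}$ when $\theta^\ell(s)<\theta^\ell(r)$, both cases coalesce into the single expression $a_{\theta^{-\ell}(r),\theta^{-\ell}(s)}$. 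Hence the exponent of $\kappa_{r,s}$ in $g^t$ is $\sum_{\ell=0}^{t-1} a_{\theta^{-\ell}(r),\theta^{-\ell}(s)}$, and $g^t \equiv 1$ precisely when these sums vanish for all $r<s$.

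Finally, I would observe that if $\kappa_{r',s'}$ lies in the same $\theta$-orbit as $\kappa_{r,s}$, say $\{r',s'\} = \{\theta^m(r),\theta^m(s)\}$, then the corresponding sum for $\kappa_{r',s'}$ is a cyclic shift of that for $\kappa_{r,s}$ (again using the sign convention to convert between ordered and unordered pairs). Consequently the $t$ vanishing conditions along an orbit are equivalent, and it suffices to impose one condition per orbit, i.e.\ for $\kappa_{r,s}$ ranging over $\mathcal{T}_\theta$. Combined with the fact that the order of $[g]$ is automatically a multiple of $t$, this yields the stated equivalence. The main delicate point, and the step I would treat most carefully, is the bookkeeping of signs: the convention $a_{\theta^\ell(r),\theta^\ell(s)} = -a_{\theta^\ell(s),\theta^\ell(r)}$ must be applied consistently both when collecting exponents of $\kappa_{r,s}$ and when transporting conditions along a $\theta$-orbit, since the relation $\kappa_{p,q}=\kappa_{q,p}^{-1}$ is the only subtlety preventing a purely combinatorial reading of the computation.
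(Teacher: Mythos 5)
Your proposal is correct and follows essentially the same route as the paper: write the element as $w\sigma$, expand the $t$-th power as $\prod_{\ell=0}^{t-1}\sigma^{\ell}w\sigma^{-\ell}$ modulo $PVL_n^{'}$ using $\sigma^t=1$, apply Lemma \ref{conj action lemma} to translate conjugation into the permutation action on indices, and collect exponents of each $\kappa_{r,s}$ with the sign convention $\kappa_{q,p}=\kappa_{p,q}^{-1}$. Your two added remarks --- that the order of the coset is automatically a multiple of $t$ since it surjects onto $\theta$, and that the vanishing conditions along a single $\theta$-orbit are cyclic shifts of one another --- are points the paper leaves implicit, and they are handled correctly.
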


\begin{proof}
We assume that $r_i=i$ for all $1 \le i \le t-1$, and the other cases can be proved in a similar way. Note that, for each $2 \leq t \leq n$, the element $\rho_1 \rho_2 \ldots \rho_{t-1}$ has order $t$ in $VL_n$. We will analyse the conditions on $a_{i, j} \in \mathbb{Z}$ under which the coset of the element $(\prod_{1 \leq i<j \leq n} \kappa_{i, j}^{a_{i, j}}) (\rho_1 \rho_2 \ldots \rho_{t-1})$ has order $t$ in $V L_n / PVL_n^{'}$. To proceed, let $\pi\left(\rho_1 \rho_2 \ldots \rho_{t-1}\right)=\theta$ and $\mathcal{T}_\theta=\left\{\kappa_{r_1, s_1}, \kappa_{r_2, s_2}, \ldots, \kappa_{r_m, s_m}\right\}$ a set of representatives of orbits of the action of $\rho_1 \rho_2 \ldots \rho_{t-1}$ on $\left\{\kappa_{i, j} \mid 1 \leq i \ne j \leq n\right\}$. Then, we have

\begin{eqnarray*}
& & \left( (\prod_{1 \leq i<j \leq n} \kappa_{i, j}^{a_{i, j}} ) \left(\rho_1 \rho_2 \ldots \rho_{t-1}\right)\right)^t \\
&=& (\prod_{1 \leq i<j \leq n} \kappa_{i, j}^{a_{i, j}})\left(\rho_1 \rho_2 \ldots \rho_{t-1}\right) \cdots (\prod_{1 \leq i<j \leq n} \kappa_{i, j}^{a_{i, j}}) \left(\rho_1 \rho_2 \ldots \rho_{t-1}\right) \\
&=& (\prod_{1 \leq i<j \leq n} \kappa_{i, j}^{a_{i, j}}) \left( \left(\rho_1 \ldots \rho_{t-1}\right) (\prod_{1 \leq i<j \leq n} \kappa_{i, j}^{a_{i, j}}) \left(\rho_1 \ldots \rho_{t-1}\right)^{-1} \right) \\
& & \left(\left(\rho_1 \ldots \rho_{t-1}\right)^2 (\prod_{1 \leq i<j \leq n} \kappa_{i, j}^{a_{i, j}})  \left(\rho_1 \ldots \rho_{t-1}\right)^{-2} \right) \cdots\\
& &\cdots \left( \left(\rho_1 \ldots \rho_{t-1}\right)^{t-1} (\prod_{1 \leq i<j \leq n} \kappa_{i, j}^{a_{i, j}}) \left(\rho_1 \ldots \rho_{t-1}\right)^{-(t-1)} \right) \left(\rho_1 \ldots \rho_{t-1}\right)^t \\
&=& (\prod_{1 \leq i<j \leq n} \kappa_{i, j}^{a_{i, j}})~ \left( \theta \cdot (\prod_{1 \leq i<j \leq n} \kappa_{i, j}^{a_{i, j}})\right)~ \left( \theta^2 \cdot (\prod_{1 \leq i<j \leq n} \kappa_{i, j}^{a_{i, j}} )\right)  \cdots \\
& & \cdots \left( \theta^{t-1} \cdot (\prod_{1 \leq i<j \leq n} \kappa_{i, j}^{a_{i, j}})\right) ~(\rho_1 \ldots \rho_{t-1} )^t \\
&=& (\prod_{1 \leq i<j \leq n} \kappa_{i, j}^{a_{i, j}}) (\prod_{1 \leq i<j \leq n} \kappa_{\theta(i), \theta(j)}^{a_{i, j}} ) (\prod_{1 \leq i<j \leq n} \kappa_{\theta^2(i), \theta^2(j)}^{a_{i, j}}) \cdots ( \prod_{1 \leq i<j \leq n} \kappa_{\theta^{t-1}(i), \theta^{t-1}(j)}^{a_{i, j}}).\\
& =& \prod_{1 \leq i<j \leq n} \left(\kappa_{i, j}^{a_{i, j}}\kappa_{i, j}^{a_{\theta^{-1}(i), \theta^{-1}(j)}}\kappa_{i, j}^{a_{\theta^{-2}(i), \theta^{-2}(j)}} \cdots \kappa_{i, j}^{a_{\theta^{-(t-1)}(i), \theta^{-(t-1)}(j)}}\right) \mod PVL_n^{'}.
\end{eqnarray*}

The last expression implies that the total exponent sum of a generator $\kappa_{i, j}$ (with $1 \le i<j \le n$) is the same as that of $\kappa_{\theta^\ell(i), \theta^\ell(j))}$ for each $0 \le \ell \le t-1$. Thus, using $\mathcal{T}_\theta$, we see that 
$$\left( (\prod_{1 \leq i<j \leq n} \kappa_{i, j}^{a_{i, j}}) (\rho_1 \rho_2 \ldots \rho_{t-1}) \right)^t=1 \mod PVL_n^{'}$$ 
if and only if the following system of integer equations has a solution
 $$
\begin{cases}
    a_{r_1, s_1}+a_{\theta^{-1}\left(r_1\right), \theta^{-1}\left(s_1\right)}+\cdots+ a_{\theta^{-(t-1)}\left(r_1\right), \theta^{-(t-1)}\left(s_1\right)}=0,& \\
    a_{r_2, s_2}+a_{\theta^{-1}\left(r_2\right), \theta^{-1}\left(s_2\right)}+\cdots+a_{\theta^{-(t-1)}\left(r_2\right), \theta^{-(t-1)}\left(s_2\right)}=0, & \\
    \vdots \quad \quad \quad \vdots \quad \quad \quad \vdots \quad \quad \quad \vdots \quad \quad \quad \vdots \quad \quad \quad \vdots \quad \quad \quad \vdots& \\
     a_{r_m, s_m}+a_{\theta^{-1}\left(r_m\right), \theta^{-1}\left(s_m\right)}+\cdots+ a_{\theta^{-(t-1)}\left(r_m\right), \theta^{-(t-1)}\left(s_m\right)}=0.&
	\end{cases}
$$
This completes the proof.
\end{proof}

\begin{corollary}\label{torsion in vln mod pvln}
Let $m_1, m_2, \ldots, m_r$ be positive integers (not necessarily distinct) each greater than 1 such that $\sum_{i=1}^r m_i \leq n$. Then $VL_n / PVL_n^{'}$ admits infinitely many elements of order $\operatorname{lcm}\left(m_1, \ldots, m_r \right)$. Further, there exists such an element whose corresponding permutation has cycle type $\left(m_1, m_2, \ldots, m_r \right)$.
\end{corollary}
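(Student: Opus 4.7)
The plan is to extend the construction in Theorem~\ref{cry-torsion-Ln} from a single cycle to an arbitrary cycle type, then exploit the freedom in the resulting orbit-sum system to produce infinitely many lifts. First I would build an explicit lift $u \in VL_n$ whose image in $S_n$ has the prescribed cycle type. Partition the first $m_1 + \cdots + m_r$ integers into $r$ consecutive blocks of sizes $m_1, \ldots, m_r$, and for each $k$ set
\[
u_k := \rho_{j_k}\rho_{j_k+1}\cdots \rho_{j_k + m_k - 2}, \qquad j_k := 1 + \textstyle\sum_{\ell < k} m_\ell,
\]
and let $u := u_1 u_2 \cdots u_r$. Generators in distinct blocks have index-distance at least two and so commute in $VL_n$, while within each block the defining relations reduce to those of $S_{m_k}$. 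Hence the $u_k$ pairwise commute, each $u_k$ has order $m_k$, and consequently $u^M = 1$ in $VL_n$, where $M := \operatorname{lcm}(m_1, \ldots, m_r)$. Setting $\theta := \pi(u)$, the coset $\overline{u} \in VL_n / PVL_n^{'}$ already furnishes one element of order $M$ whose $\pi$-image has cycle type $(m_1, \ldots, m_r)$.

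Next, for an arbitrary $w = \prod_{i<j}\kappa_{i,j}^{a_{i,j}} \in PVL_n$ I would replay the telescoping computation from the proof of Theorem~\ref{cry-torsion-Ln}, which used only the identity $u^M = 1$ and the $S_n$-action of Lemma~\ref{conj action lemma}. This yields
\[
(wu)^M \equiv \prod_{1 \le i < j \le n} \kappa_{i,j}^{\,\sum_{\ell=0}^{M-1} a_{\theta^{-\ell}(i),\,\theta^{-\ell}(j)}} \pmod{PVL_n^{'}},
\]
under the sign convention $a_{p,q} := -a_{q,p}$ for $p > q$. Because $PVL_n / PVL_n^{'} \cong \mathbb{Z}^{n(n-1)/2}$ is torsion-free, the coset of $wu$ has order exactly $M$ in $VL_n / PVL_n^{'}$ if and only if the signed $\theta$-orbit sum of the $a_{i,j}$ vanishes on every orbit of pairs.

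Finally, to exhibit infinitely many lifts I would note that $\theta \ne 1$ (each $m_k \ge 2$), so the $\theta$-action on pairs admits at least one non-trivial orbit $\mathcal{O}$. Choosing two distinct ordered pairs $(i_0, j_0)$ and $(\theta(i_0), \theta(j_0))$ in $\mathcal{O}$, setting $a_{i_0, j_0} = N$ and $a_{\theta(i_0), \theta(j_0)} = -N$ with all other $a_{i,j} = 0$, produces a valid solution for each $N \in \mathbb{Z}$, and distinct values of $N$ yield distinct cosets in $VL_n / PVL_n^{'}$. The one bookkeeping subtlety is the sign convention $\kappa_{q,p} = \kappa_{p,q}^{-1}$, which can flip signs when an orbit revisits a pair in reversed order; in every case, however, at least one free $\mathbb{Z}$-parameter persists on some orbit, yielding the required infinite family.
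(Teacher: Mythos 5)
Your proposal is correct and follows essentially the same route as the paper: both realize the permutation part as a product of commuting cycles on consecutive blocks of sizes $m_1,\dots,m_r$ and then use the orbit-sum criterion of Theorem \ref{cry-torsion-Ln} to produce infinitely many admissible exponent vectors on the $\kappa_{i,j}$. The only organizational difference is that the paper applies Theorem \ref{cry-torsion-Ln} once per block to get elements $w_k$ of order $m_k$ and multiplies them, whereas you rerun the telescoping computation once for the full product permutation and then exhibit the infinite family explicitly via the $N,-N$ trick (a point the paper leaves implicit).
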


\begin{proof}
By Theorem \ref{cry-torsion-Ln}, there exist elements $w_k \in VL_n$ whose cosets have order $m_k$ in $VL_n / PVL_n^{'}$ for each $1 \le k \le r$, where 
\begin{small}
$$w_k=\begin{cases}
(\prod_{\kappa_{i, j} \in \mathcal{O}_{\theta_1}\left(\kappa_{1,2}\right)} \kappa_{i, j}^{a_{i, j}})~( \rho_1 \cdots \rho_{m_1-1} )& \quad \text{for} \quad  k=1,\\
\\
(\prod_{\kappa_{i, j} \in \mathcal{O}_{\theta_k}\left(\kappa_{r_k, s_k}\right)} \kappa_{i, j}^{a_{i, j}} )~(\rho_{\sum_{l=1}^{k-1} m_l+1} \cdots \rho_{\sum_{l=1}^k m_l-1}) & \quad \text{for} \quad 2 \leq k \leq r,
\end{cases}$$
\end{small}
$a_{i, j} \in \mathbb{Z}$, $r_k=\sum_{l=1}^{k-1} m_l+1, s_k=\sum_{l=1}^{k-1} m_l+$ 2 and $\pi\left(w_k\right)=\theta_k$. Since $\rho_{\sum_{l=1}^{k_i-1} m_l+1} \cdots \rho_{\sum_{l=1}^{k_i} m_l-1}$ and $\rho_{\sum_{l=1}^{k_j-1} m_l+1} \cdots \rho_{\sum_{l=1}^{k_j} m_l-1}$ commute in $VL_n / PVL_n^{'}$, it follows that the element $w_1 w_2 \cdots w_r$ has order $\operatorname{lcm}\left(m_1, m_2, \ldots, m_r \right)$ in $VL_n / PVL_n^{'}$.  Further, $\pi\left(w_1 w_2 \cdots w_r \right)$ is an element of $S_n$ of cycle type $\left(m_1, m_2, \ldots, m_r \right)$.
\end{proof}

\begin{remark}
The preceding corollary shows that  $VL_n / PVL_n^{'}$  is not a Bieberbach group. In the spirit of \cite[Question 3.2]{MR4445569}, it is interesting to determine Bieberbach subgroups of $VL_n/PVL_n^{'}$.
\end{remark}

We conclude by tabulating the results for braid groups, twin groups, triplet groups and their virtual counterparts.
\medskip
\begin{Small}
\begin{center}\label{table1}
\begin{tabular}{|c||c|c|c|c|}
  \hline
 & $B_n$ & $T_n$  & $L_n$ \\
&&&\\
   \hline
      \hline
\textrm{Commutator } & Finite presentation  & Finite presentation& Finite presentation \\
subgroup & known for $n \ge2$ \cite{MR0251712}. & known for  $n \ge2$ \cite{MR3943376}.  &  known for  $n \ge2$ \cite{Bourbaki, MR4270786}. \\
&&&\\
  \hline
\textrm{Pure }  & Finite presentation  & Finite presentation & It is a free group of \\
subgroup & known for  $n \ge2$ \cite{MR0019087}. & known for $2 \le n \le 6$ & finite rank for $n \ge 4$ \\
 && \cite{MR4027588, MR4170471, MR4079623}. & \cite{MR1386845, KumarNaikSingh1}. \\
 &&&\\
  \hline
   \textrm{Crystallographic }  & $B_n/P_n^{'}$ is crystallographic&  $T_n/PT_n^{'}$ is crystallographic & $L_n/PL_n^{'}$ is crystallographic\\
quotient&   for $n \ge2$ \cite{MR3595797}. & for $n \ge4$ \cite{KumarNaikSingh1}. & for $n \ge4$ \cite{KumarNaikSingh1}. \\
&&&\\
  \hline
\end{tabular}\\
Table 1.
\end{center}
\end{Small}
\medskip

\begin{Small}
\begin{center}\label{table1}
\begin{tabular}{|c||c|c|c|c|}
  \hline
 & $VB_n$ & $VT_n$  & $VL_n$ \\
&&&\\
   \hline
      \hline
\textrm{Commutator } & Finite generation & Theorem \ref{commutator-subgroup-vtn}. & Theorem \ref{commutator-subgroup-vln}.\\
subgroup& known for  $n \ge 4$ \cite{MR3955820}. &  & \\
& Finite presentation &&\\
& unknown for $n\ge4$. &&\\
&&&\\
  \hline
\textrm{Pure }  & Finite presentation  & Finite presentation & Theorem \ref{presentation pure virtual triplet group}.  \\
subgroup & known for  $n \ge2$ \cite{MR2128039}. & known for $n \ge 2$  \cite{NaikNandaSingh2}. &  \\
 &&& \\
 &&&\\
  \hline
   \textrm{Crystallographic }  & $V B_n / V P_n^{'}$ is crystallographic & $VT_n /PVT_n^{'}$  is crystallographic  & Theorem \ref{cryst. quotient virtual triplet mod pure}.\\
quotient& for $n \ge 2$ \cite{Ocampo-Santos-2021}. & for $n \ge 2$ \cite{Ocampo-Santos-2021}.& \\
&&&\\
  \hline
\end{tabular}\\
Table 2.
\end{center}
\end{Small}

\begin{ack}
Pravin Kumar is supported by the PMRF fellowship at IISER Mohali. Neha Nanda has received funding from European Union's Horizon Europe research and innovation programme under the Marie Sklodowska Curie grant agreement no. 101066588. Mahender Singh is supported by the Swarna Jayanti Fellowship grants DST/SJF/MSA-02/2018-19 and SB/SJF/2019-20. 
\end{ack}
\medskip

\end{document}